\newtheorem{theorem}{Theorem}[section]
\newtheorem{definition}[theorem]{Definition}
\newtheorem{lemma}[theorem]{Lemma}
\newtheorem{conjecture}[theorem]{Conjecture}
\newtheorem{corollary}[theorem]{Corollary}
\newtheorem{proposition}[theorem]{Proposition}
\newtheorem{varexample}[theorem]{Example}
\theoremstyle{definition}
\newtheorem{remark}[theorem]{Remark}
\newcommand{\PP}{\mathbb{P}}
\newcommand{\cD}{\mathcal{D}}
\newcommand{\cO}{\mathcal{O}}
\newcommand{\bmu}{\bm{\mu}}
\newcommand{\blam}{\bm{\lambda}}
\newcommand{\Trop}{\operatorname{Trop}}
\newcommand{\trop}{\operatorname{trop}}
\newcommand{\Pic}{\operatorname{Pic}}
\newcommand{\rk}{\mathrm{rk}}
\newcommand{\an}{\mathrm{an}}
\newenvironment{example}{\begin{varexample}
\begin{normalfont}}{\end{normalfont}
\end{varexample}}
\begin{document}
\title[Components of Brill-Noether Loci]{Components of Brill-Noether Loci for Curves with Fixed Gonality}
\author{Kaelin Cook-Powell}
\author{David Jensen}

\maketitle

\begin{abstract}
We describe a conjectural stratification of the Brill-Noether variety for general curves of fixed genus and gonality.  As evidence for this conjecture, we show that this Brill-Noether variety has at least as many irreducible components as predicted by the conjecture, and that each of these components has the expected dimension.  Our proof uses combinatorial and tropical techniques.  Specifically, we analyze containment relations between the various strata of tropical Brill-Noether loci identified by Pflueger in his classification of special divisors on chains of loops.
\end{abstract}

\section{Introduction}

Given a curve $C$ over the complex numbers, the \emph{Brill-Noether variety} $W^r_d (C)$ parameterizes line bundles of degree $d$ and rank at least $r$ on $C$.  Brill-Noether varieties encode a significant amount of geometric information, and consequently are among the most well-studied objects in the theory of algebraic curves.  A series of results in the eighties concern the geometry of $W^r_d(C)$ when $C$ is general in the moduli space $\mathcal{M}_g$.  In this case, the locally closed stratum $W^r_d(C) \smallsetminus W^{r+1}_d(C)$ is smooth \cite{Gieseker} of dimension
\[
\rho (g,r,d) := g-(r+1)(g-d+r) \hspace{.25in} \mbox{\cite{GH80}},
\]
and irreducible when $\rho(g,r,d)$ is positive \cite{FulLaz}.

More recent work has focused on the situation where $C$ is general in the Hurwitz space $\mathcal{H}_{k,g}$ parameterizing branched covers of the projective line of degree $k$ and genus $g$.  The Hurwitz space $\mathcal{H}_{k,g}$ admits a natural map to the moduli space $\mathcal{M}_g$, given by forgetting the data of the map to $\PP^1$.  When $k \geq \lfloor \frac{g+3}{2} \rfloor$, this map is dominant and there is nothing new to show, so we restrict our attention to the case where $k$ is smaller than $\lfloor \frac{g+3}{2} \rfloor$.  We refer to a general point in the Hurwitz space $\mathcal{H}_{k,g}$ as a \emph{general curve of genus $g$ and gonality $k$}.  Our main result is the following.

\begin{theorem}
\label{thm:Existence}
Let $C$ be a general curve of genus $g$ and gonality $k\geq 2$. Then there exists an irreducible component of $W^r_d(C)$ of dimension 
\[
\rho(g,\alpha-1,d)-(r+1-\alpha)k,
\]
as long as this number is nonnegative, for every positive integer $\alpha \leq \min \{ r+1, k-1 \}$ satisfying either $\alpha \geq k-(g-d+r)$ or $\alpha = r+1$.
\end{theorem}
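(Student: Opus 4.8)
The plan is to prove the statement first on a tropical curve, using Pflueger's stratification of the tropical Brill-Noether locus, and then transport it back to $C$. Let $\Gamma$ be a chain of $g$ loops with edge lengths generic subject to the torsion conditions that force $\Gamma$ to have gonality $k$, and regard $\Gamma$ as the skeleton of a curve $C$ that is general in $\mathcal{H}_{k,g}$. By Pflueger's classification, $W^r_d(\Gamma)=\bigcup_t\overline{W_t}$, where $t$ runs over the $k$-uniform displacement tableaux whose shape $\lambda$ is a Young diagram inside the $(r+1)\times(g-d+r)$ rectangle with entries in $\{1,\dots,g\}$; each $\overline{W_t}$ is irreducible of dimension $g-|\lambda|$, and Pflueger's work also supplies a combinatorial criterion for when $\overline{W_t}\subseteq\overline{W_{t'}}$. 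By Baker's specialization lemma every component of $W^r_d(C)$ tropicalizes into $W^r_d(\Gamma)$, and conversely, using the lifting techniques available for chains of loops, a dense subset of each component of $W^r_d(\Gamma)$ lifts to $C$; since the tropicalization of an irreducible variety is pure of the same dimension, it then suffices to produce, for each admissible $\alpha$, a distinct \emph{irreducible component} of $W^r_d(\Gamma)$ of dimension $\rho(g,\alpha-1,d)-(r+1-\alpha)k$.

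Next I would pin down the candidate strata. Note that $\rho(g,\alpha-1,d)-(r+1-\alpha)k$ is exactly the term of Pflueger's formula $\rho_k(g,r,d)=\max_\ell\bigl(g-(r+1-\ell)(g-d+r-\ell)-\ell k\bigr)$ with $\ell=r+1-\alpha$, and that $g-\bigl(\rho(g,\alpha-1,d)-(r+1-\alpha)k\bigr)=\alpha(g-d+\alpha-1)+(r+1-\alpha)k=|\lambda_\alpha|$, where $\lambda_\alpha$ is the Young diagram with $\alpha$ rows of length $g-d+r$ followed by $r+1-\alpha$ rows of length $k-\alpha$. The hypotheses on $\alpha$ are precisely what is needed for $\lambda_\alpha$ to work: $\alpha\le r+1$ bounds the number of rows, $\alpha\le k-1$ keeps the last rows nonempty (while $\alpha=r+1$ returns the full rectangle, i.e.\ the classical stratum), $\alpha\ge k-(g-d+r)$ is equivalent to $k-\alpha\le g-d+r$, i.e.\ to $\lambda_\alpha$ fitting inside the rectangle, and nonnegativity of the asserted dimension is exactly $|\lambda_\alpha|\le g$, which makes $\lambda_\alpha$ fillable by $\{1,\dots,g\}$. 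I would then write down a suitable $k$-uniform displacement tableau $t_\alpha$ of shape $\lambda_\alpha$, the choice being guided by the conjectural description of the stratum in terms of the splitting type of $\pi_*L$ for the degree-$k$ map $\pi\colon C\to\PP^1$.

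\textbf{The main point is to show that $\overline{W_{t_\alpha}}$ is a maximal stratum}, i.e.\ that $\overline{W_{t_\alpha}}\subseteq\overline{W_{t'}}$ forces $t'=t_\alpha$; this, together with pairwise distinctness of the $\overline{W_{t_\alpha}}$ (immediate from the shapes $\lambda_\alpha$), is the entire tropical content. Here all the work is combinatorial: feeding the containment into Pflueger's criterion forces $\lambda(t')\subseteq\lambda_\alpha$ and constrains the labels of $t'$ in terms of those of $t_\alpha$, and one must argue that $t_\alpha$ is rigid in the sense that no box of $\lambda_\alpha$ can be removed without violating either $k$-uniformity or the requirement that the entries lie in $\{1,\dots,g\}$, so that $\lambda(t')=\lambda_\alpha$ and hence $t'=t_\alpha$. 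I expect this rigidity analysis to be the principal obstacle: the delicate feature — exactly the one that distinguishes the gonality-$k$ situation from the classical one — is that $k$-uniformity permits a label to recur along a diagonal of period $k$, so "shrinking the shape" and "enlarging the multiset of labels" interact and must be ruled out simultaneously.

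Finally, fix an admissible $\alpha$. Since $\overline{W_{t_\alpha}}$ is a component of $W^r_d(\Gamma)$ and a dense subset of it lies in the tropicalization of $W^r_d(C)$, irreducibility yields a component $Z_\alpha$ of $W^r_d(C)$ with $\overline{W_{t_\alpha}}\subseteq\overline{\trop(Z_\alpha)}$; purity of $\trop(Z_\alpha)$ together with the maximality of $\overline{W_{t_\alpha}}$ inside $W^r_d(\Gamma)$ forces $\dim Z_\alpha=\dim\overline{W_{t_\alpha}}=\rho(g,\alpha-1,d)-(r+1-\alpha)k$, and the $Z_\alpha$ are pairwise distinct because their tropicalizations dominate distinct components of $W^r_d(\Gamma)$. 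This gives the asserted components; the value $\alpha=r+1$ recovers, when $\rho(g,r,d)\ge 0$, the classical Brill-Noether component, while the remaining values of $\alpha$ produce the extra components predicted by the conjectural stratification.
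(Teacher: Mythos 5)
Your high-level strategy is correct and matches the paper's: prove a combinatorial maximality statement for certain tori in $W^r_d(\Gamma)$, lift to the algebraic curve, and control dimensions on both sides. (One cosmetic difference: the paper works with $k$-uniform displacement tableaux on the \emph{full} rectangle $[r+1]\times[g-d+r]$ and measures dimension by the number of \emph{distinct symbols}; your "tableaux of shape $\lambda$ inside the rectangle" is a different bookkeeping that would have to be reconciled with Pflueger's Theorem 1.4 as cited here.)

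The genuine gap is in the final transfer step, where you pass from the tropical component back to $C$. You assert that "a dense subset of each component of $W^r_d(\Gamma)$ lifts to $C$," and from this deduce $\overline{W_{t_\alpha}}\subseteq\overline{\trop(Z_\alpha)}$ for some component $Z_\alpha$ of $W^r_d(C)$, and then conclude $\dim Z_\alpha = \dim \mathbb{T}(t_\alpha)$ by purity. But the available lifting theorem (Proposition~\ref{Prop:JRLifting}, from \cite{JensenRanganthan}) is weaker: for a sufficiently general $D\in\mathbb{T}(t)$, it produces \emph{some} curve $C$ with skeleton $\Gamma$ and a lift $\cD$ --- the curve $C$ is allowed to depend on $D$. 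There is no a priori reason that a dense subset of $\mathbb{T}(t_\alpha)$ tropicalizes from a fixed $C$, nor that the lifted points all sit in a single component of $W^r_d(C)$. So purity alone gives no lower bound on $\dim Z_\alpha$. The paper closes this gap with a separate argument (the last part of the proof of Theorem~\ref{thm:Strata}): one works with the universal locus $\widetilde{\mathcal{W}}^{\bmu_\alpha}$ over $\mathcal{M}_g^k$, shows via the lifting result and a count of parameters (skeletons form a $(2g-5+2k)$-dimensional family, and each contributes a torus of dimension $g-|\bmu_\alpha|$, plus $r$ for choice of divisor representative) that $\dim\mathcal{W}^{\bmu_\alpha}\geq 3g-5+2k-|\bmu_\alpha|$ and that this locus dominates $\mathcal{M}_g^k$, so the fiber over a general $C$ has dimension at least $g-|\bmu_\alpha|$. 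The upper bound (your Baker-specialization plus maximality argument) is fine; it's the lower bound that requires this moduli-space dimension count, and your proposal omits it. Two smaller points you also need: the lifting result requires the scrollar tableau to have \emph{no vertical steps} (Definition~\ref{Def:VerticalSteps}), which one must arrange; and for $\alpha=1$ such a tableau need not exist, so the paper passes to $k-\alpha$ by Serre duality in that case.
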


We strongly suspect that Theorem~\ref{thm:Existence} identifies \emph{all} of the irreducible components of $W^r_d(C)$, for a reason that we will explain in Section~\ref{Sec:SplitStrat}.  Theorem~\ref{thm:Existence} is a generalization of several previous results.  In \cite{PfluegerkGonal}, Pflueger shows that the dimension of $W^r_d(C)$ is at most
\[
\rho_k (g,r,d) := \max_{\alpha} \rho(g,\alpha-1,d)-(r+1-\alpha)k,
\]
and asks whether every component has dimension $\rho(g,\alpha-1,d)-(r+1-\alpha)k$ for some value of $\alpha$.  In \cite{JensenRanganthan}, Ranganathan and the second author show that the maximal dimensional component has dimension exactly $\rho_k (g,r,d)$.  In \cite{CoppensMartens}, Coppens and Martens exhibit components of dimension $\rho(g,\alpha-1,d)-(r+1-\alpha)k$ for $\alpha$ equal to $1, r$, and $r+1$.  They further expand on this result in \cite{CoppensMartens2}, constructing components of dimension $\rho(g,\alpha-1,d)-(r+1-\alpha)k$ for all $\alpha$ dividing $r$ or $r+1$.

\subsection{The Splitting Type Stratification}
\label{Sec:SplitStrat}

Let $\pi : C \to \PP^1$ be a branched cover of degree $k$.  Given a line bundle $L$ on $C$, its pushforward $\pi_* L$ is a vector bundle of rank $k$ on $\PP^1$.  Every vector bundle on $\PP^1$ splits as a direct sum of line bundles
\[
\pi_* L \cong \cO (\mu_1) \oplus \cdots \oplus \cO(\mu_k)
\]
for some integers $\mu_1, \ldots , \mu_k$ that are unique up to permutation.

The vector $\bmu = (\mu_1 , \ldots , \mu_k)$ is known as the \emph{splitting type} of the vector bundle, and we write $\pi_\ast L\cong \cO(\bmu)$ for ease of notation.  We write $W^{\bmu}(C)$ for the locally closed subscheme parameterizing line bundles on $C$ whose pushforward has splitting type $\bmu$:
\[
W^{\bmu}(C) := \{ L \in \Pic (C) \vert \pi_* L \cong \cO (\bmu) \}.
\]

The splitting type of $\pi_*L$ determines not only the degree and rank of the line bundle $L$, but also the rank of $L \otimes \pi^* \cO(m)$ for all integers $m$ (see Section~\ref{Sec:Splitting}).  In this way, the varieties $W^{\bmu}(C)$ stratify $W^r_d(C)$.  The number of irreducible components of $W^r_d(C)$, as well as the dimensions of these components, are predicted by the following conjecture.  We refer the reader to Definition~\ref{Def:Order} for the definition of the partial order on splitting types, and to Definition~\ref{Def:Magnitude} for the definition of the magnitude of a splitting type.

\begin{conjecture}
\label{Conj:Strata}
Let $C$ be a general curve of genus $g$ and gonality $k \geq 2$.  Then:
\begin{enumerate}
    \item  $W^{\bmu}(C)$ is contained in the closure of $W^{\blam}(C)$ if and only if $\bmu \leq \blam$.
    \item  $W^{\bmu}(C)$ is smooth.
    \item  $W^{\bmu}(C)$ has dimension $g-\vert \bmu \vert$ if $g \geq \vert \bmu \vert$, and is empty otherwise.
    \item  $W^{\bmu}(C)$ is irreducible if $g > \vert \bmu \vert$.
\end{enumerate}
\end{conjecture}

At the time of writing, we learned of a simultaneous and independent proof of parts (1)-(3) of Conjecture~\ref{Conj:Strata}, due to H. Larson \cite{Larson}.

As evidence for Conjecture~\ref{Conj:Strata}, we consider the strata $W^{\bmu}(C)$ that the conjecture predicts to be maximal in $W^r_d (C)$.  For a given rank $r$ and degree $d$, the maximal elements of the poset of splitting types are in correspondence with positive integers $\alpha \leq \min \{ r+1, k-1 \}$ satisfying either $\alpha \geq k-(g-d+r)$ or $\alpha = r+1$.  (See Definition~\ref{Def:MaxSplit} and Proposition~\ref{Prop:Incomparable} for details.)  Let $\bmu_{\alpha}$ denote the splitting type corresponding to the integer $\alpha$.  Conjecture~\ref{Conj:Strata} predicts that the irreducible components of $W^r_d(C)$ are precisely the closures of the strata $W^{\bmu_{\alpha}}(C)$.  We prove the following stronger version of Theorem~\ref{thm:Existence}.

\begin{theorem}
\label{thm:Strata}
Let $C$ be a general curve of genus $g$ and gonality $k \geq 2$.  If $g \geq \vert \bmu_{\alpha} \vert$, then $W^{\bmu_{\alpha}}(C)$ has an irreducible component of dimension $g-\vert \bmu_{\alpha} \vert$.  The closure of this component is an irreducible component of $W^r_d (C)$.
\end{theorem}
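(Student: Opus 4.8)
The plan is to prove Theorem~\ref{thm:Strata} — which immediately yields Theorem~\ref{thm:Existence}, since $g-\vert\bmu_\alpha\vert=\rho(g,\alpha-1,d)-(r+1-\alpha)k$ — by degenerating $C$ to a chain of loops. Concretely, fix a chain $\Gamma$ of $g$ loops with generic edge lengths chosen so that its gonality is exactly $k$, realized (via an admissible cover) as a tropicalization of a general curve in $\mathcal{H}_{k,g}$ that degenerates to $\Gamma$, and equipped with a degree-$k$ harmonic morphism to a metric tree that tropicalizes a degree-$k$ map $C\to\PP^1$. By Baker's specialization theory, $W^r_d(\Gamma)$ bounds $W^r_d(C)$ from above, while a lifting argument bounds it from below; so it suffices to locate inside $W^r_d(\Gamma)$, for each admissible $\alpha$, a polyhedral cell of dimension $g-\vert\bmu_\alpha\vert$ whose general point carries splitting type $\bmu_\alpha$ and which is maximal among Pflueger's cells. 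Pflueger's classification of special divisors presents $W^r_d(\Gamma)$ as a union of finitely many such cells, indexed by displacement tableaux of prescribed shapes, with explicit dimensions and explicit closure relations, and this is the combinatorial input we exploit.

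The first step is a dictionary: for a divisor class $D$ in the relative interior of a Pflueger cell, the numbers $\rk(D+mH)$, where $H$ is a fibre of $\pi$ and $m\in\ZZ$, are constant, and by Definition~\ref{Def:Order} they record exactly the splitting type of the tropical pushforward; this splitting type turns out to depend only on the shape of the tableau indexing the cell. Grouping cells by shape produces a tropical stratification $W^r_d(\Gamma)=\bigsqcup_{\bmu}W^{\bmu}(\Gamma)$, compatible with specialization in the sense that $W^{\bmu}(\Gamma)\subseteq\overline{W^{\blam}(\Gamma)}$ forces $\bmu\le\blam$, by semicontinuity of the ranks of twists. The combinatorial heart is then to prove: (i) for each admissible $\alpha$ there is a cell $\sigma_\alpha\subseteq W^{\bmu_\alpha}(\Gamma)$ with $\dim\sigma_\alpha=g-\vert\bmu_\alpha\vert$, checked from the magnitude formula of Definition~\ref{Def:Magnitude}; and (ii) $\dim W^{\bmu}(\Gamma)\le g-\vert\bmu\vert$ for every $\bmu$, a refinement of Pflueger's bound $\dim W^r_d(\Gamma)\le\rho_k(g,r,d)$. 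Granting (i) and (ii), $\sigma_\alpha$ is automatically a maximal cell of $W^r_d(\Gamma)$: if it were contained in the closure of a cell of larger dimension lying in a stratum $W^{\blam}(\Gamma)$, then $\vert\blam\vert<\vert\bmu_\alpha\vert$ by (ii), while $\bmu_\alpha\le\blam$ by semicontinuity; since $\bmu_\alpha$ is maximal among splitting types of rank at least $r$ (Definition~\ref{Def:MaxSplit}, Proposition~\ref{Prop:Incomparable}), this forces $\blam=\bmu_\alpha$, a contradiction. The same point shows $W^{\bmu_\alpha}(\Gamma)$ lies in the closure of no other stratum occurring in $W^r_d(\Gamma)$.

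To finish, a lifting argument for chains of loops, in the style of Cartwright--Jensen--Payne and Pflueger — where the genericity of $C$ in $\mathcal{H}_{k,g}$ enters — produces, for a general point $v\in\sigma_\alpha$, a line bundle $L$ on $C$ with $\trop(L)=v$ through which $W^r_d(C)$ has local dimension at least $\dim\sigma_\alpha=g-\vert\bmu_\alpha\vert$. Conversely, Baker's specialization inequality combined with (ii) gives $\dim W^{\bmu}(C)\le\dim\overline{W^{\bmu}(\Gamma)}\le g-\vert\bmu\vert$ for every $\bmu$, because $\trop(W^{\bmu}(C))\subseteq\overline{W^{\bmu}(\Gamma)}$ by semicontinuity. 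Since the splitting type of $L$ is at least $\bmu_\alpha$ and $L$ has rank at least $r$, maximality of $\bmu_\alpha$ gives $L\in W^{\bmu_\alpha}(C)$, so the component of $W^r_d(C)$ through $L$ lies in $\overline{W^{\bmu_\alpha}(C)}$ and has dimension exactly $g-\vert\bmu_\alpha\vert$; its intersection with $W^{\bmu_\alpha}(C)$ is an irreducible component of $W^{\bmu_\alpha}(C)$ of that dimension. Finally, if the closure of this component is contained in an irreducible closed $Z\subseteq W^r_d(C)$, then $Z$ contains points of splitting type $\bmu_\alpha$, so by the same semicontinuity-and-maximality argument the generic point of $Z$ has splitting type $\bmu_\alpha$, whence $Z\subseteq\overline{W^{\bmu_\alpha}(C)}$ and $\dim Z\le g-\vert\bmu_\alpha\vert$; thus $Z$ equals that closure, which is therefore an irreducible component of $W^r_d(C)$.

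I expect the main obstacle to be the combinatorics underlying the second paragraph: establishing the dictionary between splitting types and tableau shapes, and proving (i) and especially the dimension bound (ii), all of which rest on a careful analysis of the closure relations among Pflueger's strata — the containment relations advertised in the abstract. The degeneration set-up, the lifting input, and the semicontinuity bookkeeping of the last paragraph are standard in this circle of ideas; it is matching Pflueger's explicit combinatorics to the magnitude of a splitting type, uniformly in $\alpha,g,r,d,k$, that demands the real work.
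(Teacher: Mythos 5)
Your overall strategy---degenerate to a chain of loops, use Pflueger's classification, identify maximal cells with maximal splitting types, and lift---is the same as the paper's, but two of your intermediate steps have genuine gaps, and a third is misdescribed in a way that hides where the real work goes.

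First, the ``dictionary'' step is wrong as stated: in Pflueger's classification all the tableaux indexing the tori of $W^r_d(\Gamma)$ live on the same shape $[r+1]\times[g-d+r]$, so ``the shape of the tableau indexing the cell'' is not a meaningful invariant, and there is no natural tropical stratification $W^r_d(\Gamma)=\bigsqcup_{\bmu}W^{\bmu}(\Gamma)$ obtained by ``grouping cells by shape''---Pflueger's tori overlap, and the splitting type of a \emph{lift} of a tropical divisor class is not a function of the tropical data alone. What actually controls the splitting type is whether the tableau is \emph{scrollar} and of which type $\alpha$; the paper extracts from a scrollar tableau the precise tropical ranks $\rk(D-q g^1_k)$ and $\rk(D-(q+1)g^1_k)$ (Lemma~\ref{Lem:JRBounds}), applies Baker's specialization to get \emph{inequalities} on the $h^0$'s of twists, and then squeezes these against the maximality of $\bmu_\alpha$ in the dominance order to pin down the splitting type (Proposition~\ref{Prop:ScrollSpecialize}). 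Relatedly, your claim (ii) --- that $\dim W^{\bmu}(\Gamma)\le g-\vert\bmu\vert$ for \emph{every} $\bmu$ --- is a different and substantially stronger statement than what the paper establishes. The paper proves instead that the tori maximal with respect to containment in $W^r_d(\Gamma)$ are exactly those indexed by scrollar tableaux (Theorem~\ref{thm:MaxScrollar}, proved in Sections~\ref{Sec:Max} and~\ref{Sec:NonMax}), and combines this with a known dimension formula for scrollar tori (Proposition~\ref{Prop:ScrollarDim}). Your (ii) is not an obvious consequence of this, nor is it clear how you would define or bound $W^{\bmu}(\Gamma)$ so as to make your maximality and ``$\trop(W^{\bmu}(C))\subseteq\overline{W^{\bmu}(\Gamma)}$'' steps go through.

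Second, the lower bound on the dimension of the component is glossed over. Lifting a single $D\in\sigma_\alpha$ to a line bundle $L$ on $C$ (which is what the lifting result of \cite{JensenRanganthan}, Proposition~\ref{Prop:JRLifting}, provides, and only under a ``no vertical steps'' hypothesis on the scrollar tableau that you do not mention) gives no local dimension lower bound by itself; one still has to show the component through $L$ is at least $(g-\vert\bmu_\alpha\vert)$-dimensional. The paper does this via a universal-family dimension count over $\mathcal{M}_g^k$: one bounds $\dim\widetilde{\mathcal{W}}^{\bmu_\alpha}$ from below by tropicalizing, observes that the relevant component dominates $\mathcal{M}_g^k$, and deduces the fiber dimension (this is \cite[Theorem~9.3]{JensenRanganthan}, reproduced in the paper's proof). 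Your phrasing that the lifting argument ``produces $\ldots$ a line bundle $L$ $\ldots$ through which $W^r_d(C)$ has local dimension at least $\dim\sigma_\alpha$'' asserts the conclusion of this dimension count without supplying it. You also omit the reduction that handles $\alpha=1$ by Serre duality (the lifting result needs $\alpha>1$ to produce a tableau with no vertical steps) and the separate treatment of $k=2$.
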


\subsection{Approach and Techniques}

Our approach is based on tropical techniques developed in \cite{CDPR, PfluegerkGonal, PfluegerCycles, JensenRanganthan}.  Each of these papers establishes results about Brill-Noether varieties by studying the divisor theory of a particular family of metric graphs, known as the chains of loops. The first of these papers \cite{CDPR} provides a new proof of the Brill-Noether Theorem.  Key to this argument is the classification of special divisors on chains of loops $\Gamma$ with generic edge lengths.  Specifically, \cite{CDPR} shows that $W^r_d(\Gamma)$ is a union of tori $\mathbb{T}(t)$, where the tori are indexed by standard Young tableaux $t$.

In \cite{PfluegerCycles}, Pflueger generalizes this result to chains of loops with arbitrary edge lengths.  In this case, $W^r_d(\Gamma)$ is still a union of tori, but here the tori are indexed by a more general type of tableaux, known as \emph{displacement tableaux}.  (See Definition~\ref{Def:Displacement} and Theorem~\ref{thm:Classification}.)  In \cite{PfluegerkGonal}, Pflueger computes the dimension of the largest of these tori, and thus obtains his bound on the dimensions of Brill-Noether loci for general $k$-gonal curves.

Instead of studying the tori of maximum dimension, in this paper we study the tori that are maximal with respect to containment.  The tableaux corresponding to maximal-dimensional tori belong to a larger family, known as \emph{scrollar tableaux}.  (See Definition~\ref{Def:Scrollar}.)  There is a natural partition of scrollar tableaux into types, where the types are indexed by positive integers $\alpha \leq \min \{ r+1, k-1 \}$ satisfying either $\alpha \geq k-(g-d+r)$ or $\alpha = r+1$.  It is shown in \cite{JensenRanganthan} that, under certain mild hypotheses, divisor classes corresponding to scrollar tableaux lift to divisor classes on $k$-gonal curves in families of the expected dimension.

Our main combinatorial result is the following.

\begin{theorem}
\label{thm:MaxScrollar}
Let $\Gamma$ be a $k$-gonal chain of loops of genus $g$, and let $t$ be a $k$-uniform displacement tableau on $[r+1]\times[g-d+r]$.  The torus $\mathbb{T}(t)$ is maximal with respect to containment in $W^r_d(\Gamma)$ if and only if $t$ is scrollar.  In other words,
\[
W^r_d (\Gamma) = \bigcup_{t \text{ scrollar}} \mathbb{T}(t) .
\]
\end{theorem}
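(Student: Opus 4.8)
The plan is to reduce the statement, via Pflueger's classification, to a combinatorial comparison of tableaux and then to a single containment criterion for the associated tori. By Theorem~\ref{thm:Classification}, $W^r_d(\Gamma)$ is the union of the tori $\mathbb{T}(t)$ as $t$ ranges over all $k$-uniform displacement tableaux on $[r+1]\times[g-d+r]$, so the theorem follows from two claims: \emph{(i)} if $t$ is not scrollar then $\mathbb{T}(t)$ is \emph{properly} contained in $\mathbb{T}(t')$ for some scrollar tableau $t'$; and \emph{(ii)} if $t$ is scrollar then $\mathbb{T}(t)$ is not properly contained in $\mathbb{T}(t'')$ for any $k$-uniform displacement tableau $t''$. (Together these also show that having a scrollar representative is a property of the torus, not of the chosen tableau, so the ``if and only if'' is well posed.) Both claims rest on a criterion for the containment $\mathbb{T}(t)\subseteq\mathbb{T}(t')$, which I would establish first. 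Recall from \cite{CDPR,PfluegerCycles} that $\mathbb{T}(t)$ is a fixed translate inside $\Jac(\Gamma)$ of the subtorus supported on the loops $\gamma_i$ with $i\notin\val(t)$, and that on each loop $\gamma_i$ with $i\in\val(t)$ the break divisor of a class in $\mathbb{T}(t)$ occupies a prescribed point $p_i(t)$ determined by the content classes modulo $k$ occupied by $i$ and its predecessors in $t$; here $k$-uniformity is exactly what makes $p_i(t)$ well defined. Since the position on a loop $\gamma_i$ with $i\notin\val(t)$ is unconstrained in $\mathbb{T}(t)$, one should obtain
\[
\mathbb{T}(t)\subseteq\mathbb{T}(t') \iff \val(t')\subseteq\val(t) \ \text{ and }\ p_i(t')=p_i(t) \text{ for every } i\in\val(t'),
\]
and the second condition then has to be rewritten entirely in terms of the two fillings. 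Proving this equivalence is where I expect the bulk of the work to lie, because $t\mapsto\mathbb{T}(t)$ is far from injective and each $p_i(t)$ depends on $t$ globally rather than box by box.

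For \emph{(i)} I would run an explicit reduction. Pflueger's bound \cite{PfluegerkGonal} comes from the observation that a non-scrollar $t$ fails to minimize $|\val(t)|$ within its type; the goal here is to upgrade this to the containment order. Given a non-scrollar $t$, I would isolate a ``defect'': a value $v\in\val(t)$ together with a pair of content classes modulo $k$ that witnesses the failure of the balancedness in Definition~\ref{Def:Scrollar}. The key step is a local surgery that removes $v$ and re-fills the affected boxes so as to produce a $k$-uniform displacement tableau $t'$ on $[r+1]\times[g-d+r]$ with $\val(t')=\val(t)\smallsetminus\{v\}$ and with $p_i(t')=p_i(t)$ for all remaining $i$. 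By the criterion this gives $\mathbb{T}(t)\subsetneq\mathbb{T}(t')$, and iterating --- each step strictly decreases $|\val|$ --- must terminate at a tableau with no defect, which is scrollar. Showing that such a surgery always exists, preserves the displacement inequalities and $k$-uniformity, and leaves the retained positions unchanged is the combinatorial heart of this half of the argument.

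For \emph{(ii)}, suppose $t$ is scrollar and $\mathbb{T}(t)\subseteq\mathbb{T}(t'')$. Replacing $t''$ by the scrollar tableau furnished by \emph{(i)} if it is not already scrollar, we may assume $t''$ is scrollar, and the criterion gives $\val(t'')\subseteq\val(t)$. If this were a proper inclusion, some value could be deleted from the scrollar tableau $t$ without altering the positions $p_i$ of the other loops; but a scrollar tableau is rigid in this sense --- removing any value destroys either the displacement condition or the covering of $[r+1]\times[g-d+r]$ --- which on the side of splitting types is precisely the content of the incomparability statement in Proposition~\ref{Prop:Incomparable}. Hence $\val(t'')=\val(t)$, the two tori have the same dimension, and the containment is an equality, so $\mathbb{T}(t)$ is maximal. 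Combining \emph{(i)} and \emph{(ii)} with Theorem~\ref{thm:Classification} yields both the stated equivalence and the displayed decomposition $W^r_d(\Gamma)=\bigcup_{t\text{ scrollar}}\mathbb{T}(t)$. The principal obstacle throughout is the pairing of the containment criterion with the defect-removing surgery of step \emph{(i)}: articulating ``$\mathbb{T}(t)\subseteq\mathbb{T}(t')$'' as a transparent relation between fillings, and then exhibiting, for every non-scrollar tableau, a value whose deletion respects that relation while keeping the filling a valid $k$-uniform displacement tableau. The rigidity needed for \emph{(ii)} should, by comparison, fall out directly from the explicit description of scrollar tableaux.
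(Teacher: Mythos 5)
Your top-level strategy is correct and is the same as the paper's: by Theorem~\ref{thm:Classification} the statement reduces to proving (i) every torus $\mathbb{T}(t)$ is contained in some $\mathbb{T}(t')$ with $t'$ scrollar, and (ii) $\mathbb{T}(t)$ is maximal when $t$ is scrollar, and both halves hinge on the containment criterion you state, which is exactly Lemma~\ref{lem:Containment}. Where you diverge is in how you propose to carry out each half, and there are real problems with both.

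For \emph{(i)}, the paper does \emph{not} remove symbols one at a time. It introduces a statistic $S_t(x,y)$ recording the shape of the hook formed by the $k-1$ largest symbols weakly above and to the left of $(x,y)$, proves monotonicity inequalities on adjacent boxes (Lemma~\ref{lem:BadConfigurations}), defines an \emph{admissible path} from $(\alpha,k-\alpha)$ to $(a,b)$ governed by this statistic, proves such a path always exists (Proposition~\ref{prop:AdmissibleExistence}), and then builds a scrollar $t'$ by induction on $a+b$, extending row by row or column by column along the path. A single step of this procedure can collapse several symbols at once. Your ``delete one defect value and refill'' surgery is a genuinely different proposal, and you yourself flag it as the heart of the argument without supplying it: you do not say what a ``defect'' is, why one always exists in a non-scrollar tableau, or why a refilling with exactly one fewer symbol is guaranteed to exist and remain a $k$-uniform displacement tableau. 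Nothing in the paper suggests such a one-symbol-at-a-time reduction is possible in general, and even if it is, the required argument is absent from your sketch.

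For \emph{(ii)}, the logic does not hold up. From $\mathbb{T}(t)\subseteq\mathbb{T}(t'')$ with $\val(t'')\subsetneq\val(t)$ you infer that ``some value could be deleted from the scrollar tableau $t$ without altering the positions $p_i$.'' But $t''$ is an arbitrary filling whose symbol set happens to be a subset of $\val(t)$; it is not $t$ with one symbol erased, and no rigidity statement of the form you want is available to quote. Your appeal to Proposition~\ref{Prop:Incomparable} is also a category error: that proposition says the maximal splitting types $\bmu_\alpha$ are pairwise incomparable in the dominance order on vector bundles on $\PP^1$, which is a statement about a different poset and is used in Section~\ref{Sec:Connections} to relate the combinatorics to geometry, not to prove tableau rigidity. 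In fact the implication runs the other way in the paper: Theorem~\ref{thm:MaxScrollar} is proved purely combinatorially and is then an input into the splitting-type analysis, not a consequence of it. The actual proof of maximality (Theorem~\ref{thm:Maximal}) occupies all of Section~\ref{Sec:Max}: one shows $t'(n,m)\geq t(n,m)$ by induction on the box, using Lemmas~\ref{lem:Regions} and~\ref{lem:TooClose} to constrain where a matching symbol can sit, and then obtains the reverse inequality by applying the argument to the rotated tableau $t_R$. This is far from ``falling out directly from the explicit description of scrollar tableaux.'' In short, the skeleton of your reduction is right, but both of the combinatorial claims that actually carry the theorem are either unsupported or argued incorrectly, and the paper's machinery --- the statistic $S_t$, admissible paths, and the inductive comparison in Section~\ref{Sec:Max} --- is what you are missing.
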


\subsection{Outline of the Paper}

Sections~\ref{Sec:Splitting} and~\ref{Sec:Divisors} contain preliminary material.  In Section~\ref{Sec:Splitting}, we review the basic theory of splitting types, and identify those that are maximal with respect to the dominance order.  In Section~\ref{Sec:Divisors}, we review the classification of special divisor classes on chains of loops from \cite{PfluegerkGonal, PfluegerCycles}, and the necessary results on scrollar tableaux from \cite{JensenRanganthan}.  In Section~\ref{Sec:Connections} we discuss the relation between our combinatorial and geometric results, and in particular show that Theorem~\ref{thm:MaxScrollar} implies Theorem~\ref{thm:Strata}.  In the final two sections, which are purely combinatorial, we prove Theorem~\ref{thm:MaxScrollar}.  In Section~\ref{Sec:Max}, we show that if $t$ is a scrollar tableau, then $\mathbb{T}(t)$ is maximal, and in Section~\ref{Sec:NonMax}, we establish the converse.

\subsection*{Acknowledgements}
We would like to thank Nathan Pflueger for several productive conversations, and for helping to formulate Conjecture~\ref{Conj:Strata}.  We also wish to thank Dhruv Ranganathan for comments on an early draft of this paper, and both Sam Payne and Ravi Vakil for helpful advice regarding the submission process.  The second author was supported by NSF DMS-1601896.

\section{Splitting Types}
\label{Sec:Splitting}

\subsection{Preliminary Definitions}
\label{Sec:SplitDefs}

In this section, we review the definition of splitting types and discuss some of their basic properties.  Let $\pi : C \to \PP^1$ be a branched cover of degree $k$ and genus $g$, and let $L$ be a line bundle on $C$.  As explained in Section~\ref{Sec:SplitStrat}, the pushforward $\pi_* L$ is a vector bundle of rank $k$ on $\PP^1$, and every vector bundle on $\PP^1$ splits as a direct sum of line bundles
\[
\pi_* L \cong \cO (\mu_1) \oplus \cdots \oplus \cO(\mu_k).
\]
The integers $\mu_1, \ldots , \mu_k$ are unique up to permutation.  We will assume throughout that
\[
\mu_1 \leq \mu_2 \leq \cdots \leq \mu_k.
\]
The vector $\bmu = (\mu_1 , \ldots , \mu_k)$ is known as the \emph{splitting type} of the vector bundle, and we write $\pi_\ast(L)\cong\cO(\bmu)$ for ease of notation.  It is helpful to think of a splitting type $\bmu$ as a partition with possibly negative parts.  This is because, for any $\ell$, the sum of the $\ell$ smallest entries of $\bmu$ is a lower semicontinuous invariant.  It is therefore natural to endow the set of splitting types with a partial order, extending the dominance order on partitions.

\begin{definition}
\label{Def:Order}
We define the \emph{dominance order} on splitting types as follows.  Let $\bmu$ and $\blam$ be splitting types satisfying $\sum_{i=1}^k \mu_i = \sum_{i=1}^k \lambda_i$.  We say that $\bmu \leq \blam$ if and only if
\[
\mu_1 + \cdots + \mu_{\ell} \leq \lambda_1 + \cdots + \lambda_{\ell} \hspace{.25 in} \mbox{ for all } \ell \leq k.
\]
\end{definition}

The splitting type of $\pi_* L$ determines the rank and degree of the line bundle $L$, as well as the rank of all its twists by line bundles pulled back from the $\PP^1$.  This can be seen by the Projection Formula, as follows:
\begin{align}
\label{eq:ranksequence}
\tag{$\star$} h^0(C,L \otimes \pi^*\cO_{\PP^1}(m)) &= h^0(\PP^1,\pi_* L \otimes \cO_{\PP^1}(m)) \\
&= \sum_{i=1}^{k} h^0(\PP^1,\cO_{\PP^1}(\mu_i + m)) \notag \\
&= \sum_{i=1}^{k} \max \{ 0, \mu_i +m+1 \} . \notag
\end{align}
In particular, we have
\begin{align}
h^0(L) &= \sum_{i=1}^k \max \{ 0, \mu_i +1 \} \hspace{.2 in} \mbox{ and }\notag \\
\deg L &= g+k-1+ \sum_{i=1}^k \mu_i. \notag
\end{align}
This suggests the following definition.

\begin{definition}
Let $W^{\bmu}(C)$ denote the locally closed subscheme parameterizing line bundles on $C$ whose pushforward has splitting type $\bmu$:
\[
W^{\bmu}(C) := \{ L \in \Pic (C) \vert \pi_* L \cong \cO (\bmu) \}.
\]
\end{definition}

\noindent The expected codimension of $W^{\bmu}(C)$ in $\Pic^d (C)$ is given by the \emph{magnitude} of $\bmu$.

\begin{definition}
\label{Def:Magnitude}
The \emph{magnitude} of a splitting type $\bmu$ is
\[
\vert \bmu \vert := \sum_{i<j} \max \{ 0, \mu_j - \mu_i -1 \}.
\]
\end{definition}

\begin{example}
Let $C$ be a trigonal curve of genus 5.  We will show that $W^1_4 (C)$ has 2 irreducible components, both isomorphic to $C$.  First, there is a 1-dimensional family of rank 1 divisor classes obtained by adding a basepoint to the $g^1_3$.  If $D \in W^1_4 (C)$ is not in this 1-dimensional family, then $D-g^1_3$ is not effective.  It follows from the basepoint free pencil trick that the multiplication map
\[
\nu : H^0 (D) \otimes H^0 (g^1_3) \to H^0 (D+g^1_3)
\]
is injective.  The divisor class $D+g^1_3$ is therefore special.  From this we see that the Serre dual $K_C - D$ is a divisor class in $W^1_4 (C)$ with the property that $(K_C - D) - g^1_3$ is effective.

We therefore see that $W^1_4 (C)$ has two components, both isomorphic to $C$, as pictured in Figure~\ref{Fig:Trigonal}.  One of these components consists of divisor classes $D$ such that $D-g^1_3$ is effective, and the other component consists of the Serre duals of classes in the first component.  Since $K_C - 2g^1_3$ is effective of degree 2, we see that these two components intersect in 2 points.

Alternatively, this analysis can be carried out by examining the splitting type stratification of $W^1_4 (C)$.  By (\ref{eq:ranksequence}), we see that line bundles in the first component, in the complement of the two intersection points, have splitting type $(-2,-2,1)$.  Similarly, line bundles in the second component, in the complement of the two intersection points, have splitting type $(-3,0,0)$.  Finally, the two line bundles in the intersection have splitting type $(-3,-1,1)$.  Notice that this third splitting type is smaller than each of the previous two in the dominance order, and that the codimension of each stratum in $\Pic^4 (C)$ is the magnitude of the splitting type.

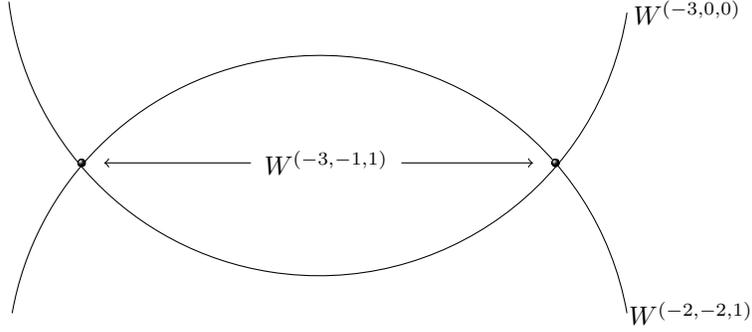
\begin{figure}[h!]
\begin{tikzpicture}
\draw (0,0) arc[radius = 4.15, start angle=10, end angle=170];
\draw (0,4) arc[radius = 4.15, start angle=-9, end angle=-173];

\draw (0.8,4) node {$W^{(-3,0,0)}$};
\draw (0.85,0) node {$W^{(-2,-2,1)}$};
\draw (-4,2) node {$W^{(-3,-1,1)}$};

\draw [ball color=black] (-0.95,2) circle (0.5mm);
\draw [ball color=black] (-7.25,2) circle (0.5mm);

\draw[->] (-3,2)--(-1.25,2);
\draw[->] (-5,2)--(-6.95,2);

\end{tikzpicture}
\caption{Stratification of $W^1_4$ for a general curve of genus 5 and gonality 3.}
\label{Fig:Trigonal}
\end{figure}

\end{example}

\subsection{Maximal Splitting Types}
\label{Sec:MaxSplit}

For the remainder of this section, we fix positive integers $g$, $r$, $d$, and $k$ such that $r > d-g$.  Among the possible splitting types of line bundles of degree $d$ and rank at least $r$ on a $k$-gonal curve of genus $g$, we identify those that are maximal with respect to the dominance order.

\begin{definition}
\label{Def:MaxSplit}
Let $\alpha \leq \min \{ r+1, k-1 \}$ be a positive integer.  By the division algorithm, there exists a unique pair of integers $q,\beta$ such that
\[
r+1 = q\alpha + \beta, \hspace{.2 in} 0 \leq \beta < \alpha .
\]
Similarly, there exists a unique pair of integers $q',\beta'$ such that
\[
g-d+r = q'(k-\alpha) + \beta', \hspace{.2 in} 0 \leq \beta' < k-\alpha .
\]
We define the splitting type $\bmu_{\alpha}$ as follows:
\begin{displaymath}
\mu_{\alpha,i} := \left\{ \begin{array}{ll}
-q'-2 & \textrm{if $0 < i \leq \beta'$} \\
-q'-1 & \textrm{if $\beta' < i \leq k-\alpha$} \\
q-1 & \textrm{if $k-\alpha < i \leq k-\beta$} \\
q & \textrm{if $k-\beta < i \leq k$.}
\end{array}\right.
\end{displaymath}

\end{definition}

Heuristically, $\bmu_{\alpha}$ is the ``most balanced'' splitting type of degree $d$ and rank $r$, subject to the constraint that precisely $\alpha$ of its entries are nonnegative.  We show that the expected codimension of $W^{\bmu_{\alpha}}(C)$ coincides with the dimensions of irreducible components of $W^r_d (C)$ predicted by \cite[Question~1.12]{PfluegerkGonal}.

\begin{lemma}
\label{Lem:ExpectDim}
For any integer $\alpha$, we have
\[
g - \vert \bmu_{\alpha} \vert = \rho(g,\alpha-1,d)-(r+1-\alpha)k.
\]
\end{lemma}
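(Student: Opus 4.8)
The plan is a direct computation of the magnitude $\vert \bmu_\alpha \vert$, after first rewriting the right-hand side. Unwinding the definition of $\rho$ and using the identity $g-d+\alpha-1 = (g-d+r)-(r+1-\alpha)$, one gets
\[
\rho(g,\alpha-1,d)-(r+1-\alpha)k = g-\alpha(g-d+\alpha-1)-(r+1-\alpha)k = g-\alpha(g-d+r)-(r+1-\alpha)(k-\alpha).
\]
So the lemma is equivalent to the clean formula $\vert \bmu_\alpha \vert = \alpha(g-d+r)+(r+1-\alpha)(k-\alpha)$, which is what I would actually prove.

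The next step is to record the shape of $\bmu_\alpha$ from Definition~\ref{Def:MaxSplit}: its entries fall into a ``negative block'' $N$ consisting of the $k-\alpha$ smallest entries, which take the values $-q'-2$ and $-q'-1$, and a ``positive block'' $P$ consisting of the $\alpha$ largest entries, which take the values $q-1$ and $q$. This is where the standing hypotheses enter. Since $\alpha \leq r+1$, the division algorithm forces $q \geq 1$ (otherwise $\beta \geq \alpha$), so every entry of $P$ is nonnegative; since $\alpha \leq k-1$ we have $k-\alpha \geq 1$, and since $r > d-g$ we have $g-d+r \geq 1$, which forces $q' \geq 0$, so every entry of $N$ is at most $-1$. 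In particular $\bmu_\alpha$ has exactly $\alpha$ nonnegative entries.

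The key observation is that in $\vert \bmu_\alpha \vert = \sum_{i<j}\max\{0,\mu_{\alpha,j}-\mu_{\alpha,i}-1\}$ only the cross pairs with $i\in N$ and $j\in P$ contribute: two entries in the same block differ by at most $1$, so their contribution is $0$, while for $i\in N$, $j\in P$ we have $\mu_{\alpha,j}-\mu_{\alpha,i}\geq 0-(-1)=1$, so the maximum equals $\mu_{\alpha,j}-\mu_{\alpha,i}-1\geq 0$. Hence
\[
\vert \bmu_\alpha \vert = \sum_{i\in N}\sum_{j\in P}\bigl(\mu_{\alpha,j}-\mu_{\alpha,i}-1\bigr) = (k-\alpha)\sum_{j\in P}\mu_{\alpha,j} - \alpha\sum_{i\in N}\mu_{\alpha,i} - \alpha(k-\alpha).
\]
I then evaluate the two block sums using the division-algorithm identities: $\sum_{j\in P}\mu_{\alpha,j} = (\alpha-\beta)(q-1)+\beta q = q\alpha+\beta-\alpha = r+1-\alpha$, and $\sum_{i\in N}\mu_{\alpha,i} = \beta'(-q'-2)+(k-\alpha-\beta')(-q'-1) = -q'(k-\alpha)-\beta'-(k-\alpha) = -(g-d+r)-(k-\alpha)$. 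Substituting these in, the $\pm\alpha(k-\alpha)$ terms cancel and I am left with $\vert\bmu_\alpha\vert = (k-\alpha)(r+1-\alpha)+\alpha(g-d+r)$, which is the desired identity.

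The whole argument is bookkeeping, so there is no serious obstacle; the one place that genuinely uses the hypotheses — and hence the step I would state carefully — is the structural claim that $\bmu_\alpha$ splits into a strictly negative block and a nonnegative block, i.e. that $q\geq 1$ and $q'\geq 0$, since this is exactly what guarantees that no cross pair needs the $\max$ and that the magnitude reduces to the single double sum above.
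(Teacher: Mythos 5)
Your argument is correct and follows essentially the same route as the paper: both identify that only the ``cross'' pairs between the $k-\alpha$ smallest entries and the $\alpha$ largest entries of $\bmu_\alpha$ contribute to $\vert\bmu_\alpha\vert$, reduce the magnitude to the double sum $(k-\alpha)\sum_{j\in P}\mu_{\alpha,j}-\alpha\sum_{i\in N}\mu_{\alpha,i}-\alpha(k-\alpha)$, and evaluate the block sums via the division-algorithm identities. You are a bit more explicit than the paper in verifying $q\geq 1$ and $q'\geq 0$ (so that the $\max$ can be dropped on cross pairs), which is a welcome bit of care, but it is the same computation.
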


\begin{proof}
First, recall that
\[
\vert \bmu_{\alpha} \vert = \sum_{i<j} \max \{ 0, \mu_{\alpha,j} - \mu_{\alpha,i} -1 \}.
\]
If $i<j \leq k-\alpha$, then $\mu_{\alpha,j} - \mu_{\alpha,i} \leq 1$, so the pair $(i,j)$ does not contribute to the sum above.  Similarly, if $k-\alpha < i < j$, then $\mu_{\alpha,j} - \mu_{\alpha,i} \leq 1$, so again the pair $(i,j)$ does not contribute to the sum above.

On the other hand, if $i \leq k-\alpha$ and $j > k-\alpha$, then the pair $(i,j)$ does contribute to the sum.  There are precisely $(k-\alpha)\alpha$ such pairs, each $\mu_{\alpha,i}$ with $i \leq k-\alpha$ appears in exactly $\alpha$ of these pairs, and each $\mu_{\alpha,j}$ with $j > k-\alpha$ appears in exactly $k-\alpha$ of these pairs.  It follows that we may rewrite the sum above as
\begin{align*}
\vert \bmu \vert &= (k-\alpha) \sum_{j=k-\alpha+1}^k \mu_j -\alpha \sum_{i=1}^{k-\alpha} \mu_i - (k-\alpha)\alpha \notag \\
&= (k-\alpha)(r+1-\alpha) + \alpha(g-d+r+k-\alpha) - (k-\alpha)\alpha \notag \\
&= \alpha(g-d+\alpha-1)+(r+1-\alpha)k. \notag
\end{align*}
Subtracting both sides from $g$ yields the result.
\end{proof}

Recall that the integers $g,r,d,$ and $k$ are fixed.  We will say that a splitting type is \emph{maximal} if it is maximal with respect to the dominance order among all splitting types satisfying
\[
\sum_{i=1}^k \mu_i = d+1-g-k
\]
and
\[
\sum_{i=1}^k \max \{ 0, \mu_i + 1 \} \geq r+1.
\]
In the rest of this section, we show that the maximal splitting types are precisely the splitting types $\bmu_{\alpha}$, when either $\alpha \geq k-(g-d+r)$ or $\alpha = r+1$.  We first prove the following reduction step.

\begin{lemma}
\label{Lem:Reduction}
A maximal splitting type $\bmu$ satisfies
\[
\sum_{i=1}^k \max \{ 0, \mu_i + 1 \} = r+1.
\]
\end{lemma}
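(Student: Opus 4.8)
The plan is to argue by contradiction. Suppose $\bmu$ is maximal but $\sum_i \max\{0,\mu_i+1\} \ge r+2$; I will construct a splitting type that still satisfies the two constraints defining maximality and is strictly larger than $\bmu$ in the dominance order, which is impossible. The construction is a single ``balancing move'': add $1$ to the smallest entry and subtract $1$ from the largest.

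Before doing this I would isolate two elementary features of any splitting type $\bmu$ with $\sum_i \mu_i = d+1-g-k$ and $\sum_i \max\{0,\mu_i+1\} \ge r+1$. First, $\mu_1 \le -2$: if instead $\mu_1 \ge -1$, then every entry is $\ge -1$ and $\sum_i \max\{0,\mu_i+1\} = k + \sum_i \mu_i = d+1-g$, which is at most $r$ since $r > d-g$, a contradiction. Second, $\mu_k \ge 0$, since otherwise that sum is $0 < r+1$. Now let $\blam$ be the increasing rearrangement of the sequence obtained from $\bmu$ by replacing $\mu_1$ with $\mu_1+1$ and $\mu_k$ with $\mu_k-1$. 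Because $\mu_1 \le -2$, incrementing $\mu_1$ leaves $\max\{0,\mu_1+1\}=0$ unchanged; because $\mu_k \ge 0$, decrementing $\mu_k$ drops $\max\{0,\mu_k+1\}$ by exactly $1$; the remaining terms are untouched. Hence $\sum_i \max\{0,\lambda_i+1\} = \sum_i \max\{0,\mu_i+1\} - 1 \ge r+1$, and $\sum_i \lambda_i = \sum_i \mu_i$, so $\blam$ again satisfies both constraints.

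The remaining, and genuinely delicate, point is that $\blam > \bmu$ strictly; I expect this to be the main obstacle. One cannot argue through the unsorted sequence $(\mu_1+1, \mu_2, \dots, \mu_{k-1}, \mu_k-1)$, since re-sorting only decreases partial sums, so $\blam$ must be compared with $\bmu$ directly. The approach I would take is to show, for every $\ell < k$, that the sum of \emph{any} $\ell$ entries of $\blam$ is at least $\mu_1 + \cdots + \mu_\ell$; minimizing over all choices then yields $\lambda_1 + \cdots + \lambda_\ell \ge \mu_1 + \cdots + \mu_\ell$, with equality at $\ell = k$, hence $\blam \ge \bmu$. This inequality breaks into a short case analysis according to whether the two modified entries $\mu_1+1$ and $\mu_k-1$ lie among the chosen $\ell$, and in each case reduces to the facts that $\mu_k$ is the largest entry and $\mu_k \ge \mu_1 + 2$ (which follows from $\mu_1 \le -2 \le \mu_k-1$). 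For strictness I would examine the first index at which the increment is felt: if $\mu_1 < \mu_2$ then already $\lambda_1 > \mu_1$, while if $\mu_1$ occurs with multiplicity $c \ge 2$ then the $c$ smallest entries of $\blam$ sum to $c\mu_1 + 1 > \mu_1 + \cdots + \mu_c$ (here $c < k$ since $\mu_k \ge 0 > \mu_1$). Either way $\blam \ne \bmu$, so $\blam$ is a strictly larger feasible splitting type, contradicting the maximality of $\bmu$ and forcing $\sum_i \max\{0,\mu_i+1\} = r+1$.
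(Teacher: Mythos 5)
Your approach is correct and uses essentially the same balancing move as the paper. The one point where the paper does something slightly slicker is precisely the step you flagged as the ``main obstacle'': rather than perturbing $\mu_1$ and $\mu_k$ and then re-sorting, the paper increments $\mu_{j-1}$, the \emph{last} entry equal to $\mu_1$, and decrements $\mu_{j'}$, the \emph{first} entry equal to $\mu_k$ (with a brief check that $j-1 < j'$ or else $\mu_{j-1}+1 \le \mu_{j'}-1$). The resulting sequence is already nondecreasing, so the dominance comparison is immediate: the partial sums at positions $j-1,\ldots,j'-1$ each go up by one and the rest are unchanged. This $\bmu'$ is exactly the sorted sequence $\blam$ that your re-arrangement produces, so your harder case analysis arrives at the same place. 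The paper also casts the argument as an induction on $\sum_i\max\{0,\mu_i+1\}$ (showing every feasible $\bmu$ is dominated by one with $h=r+1$) instead of your one-step contradiction, but that difference is cosmetic; both yield the lemma.
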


\begin{proof}
For the purposes of this argument, we define
\[
h(\bmu) = \sum_{i=1}^k \max \{ 0, \mu_i + 1 \}.
\]
Let $\bmu$ be a splitting type satisfying
\[
\sum_{i=1}^k \mu_i = d+1-g-k
\]
and $h(\bmu) \geq r+1$.  We will show, by induction on $h(\bmu)$, that there exists a splitting type $\blam$ such that $\bmu \leq \blam$ and $h(\blam) = r+1$. 

Since $r \geq 0$, we see that $\mu_k \geq 0$, and since $h(\bmu) \geq r+1 > d-g+1$, we see that $\mu_1 < -1$.  There therefore exists an integer $i$ such that $\mu_i > \mu_{i-1}$.  Let $j$ be the smallest such integer and $j'$ the largest such integer.  Since $\mu_1 < -1$ and $\mu_k \geq 0$, either $j < j'$, or $j=j'$ and $\mu_{j-1} < \mu_j -1$.  It follows that the vector $\bmu'$ obtained from $\bmu$ by adding 1 to $\mu_{j-1}$ and subtracting 1 from $\mu_{j'}$ is nondecreasing, and therefore a valid splitting type.  Moreover, we have $\bmu < \bmu'$.  Since $\mu_{j-1} < -1$ and $\mu_{j'} \geq 0$, we see that $h(\bmu') = h(\bmu) - 1$, and the result follows by induction.
\end{proof}

We now show that every maximal splitting type is of the form $\bmu_{\alpha}$ for some $\alpha$.

\begin{lemma}
\label{Lem:MaxSplit}
Let $\bmu$ be a splitting type satisfying
\[
\sum_{i=1}^k \mu_i = d+1-g-k
\]
and
\[
\sum_{i=1}^k \max \{ 0, \mu_i + 1 \} = r+1.
\]
Let $\alpha$ denote the number of nonnegative entries of $\bmu$.  Then $\bmu \leq \bmu_{\alpha}$.
\end{lemma}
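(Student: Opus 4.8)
The plan is to show that among all splitting types with the prescribed sum $\sum \mu_i = d+1-g-k$ and with exactly $\alpha$ nonnegative entries — the latter forces $\sum \max\{0,\mu_i+1\} \geq r+1$, and by hypothesis this equals $r+1$ — the vector $\bmu_\alpha$ dominates every other. The key observation is that the condition "$\alpha$ entries are nonnegative" splits the problem into two independent balancing subproblems: one for the $k-\alpha$ negative entries $\mu_1 \leq \cdots \leq \mu_{k-\alpha} \leq -1$, and one for the $\alpha$ nonnegative entries $\mu_{k-\alpha+1} \leq \cdots \leq \mu_k$ with $\mu_{k-\alpha+1} \geq 0$.

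First I would record the two constraints that $\bmu$ must satisfy once we know it has exactly $\alpha$ nonnegative entries. The nonnegative entries satisfy $\sum_{i=k-\alpha+1}^k \max\{0,\mu_i+1\} = \sum_{i=k-\alpha+1}^k(\mu_i+1) = r+1$, so $\sum_{i=k-\alpha+1}^k \mu_i = r+1-\alpha$; and then the total-sum constraint forces $\sum_{i=1}^{k-\alpha}\mu_i = (d+1-g-k)-(r+1-\alpha) = -(g-d+r)-(k-\alpha)$, i.e. $\sum_{i=1}^{k-\alpha}(\mu_i+1) = -(g-d+r)$. So the negative part, shifted by $1$, is a partition of $-(g-d+r)$ into $k-\alpha$ nonpositive parts, and the nonnegative part is a partition of $r+1-\alpha$ into $\alpha$ nonnegative parts (after subtracting nothing). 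Both $q',\beta'$ and $q,\beta$ in Definition~\ref{Def:MaxSplit} are exactly the quotient-remainder data of these two divisions, so $\bmu_\alpha$ restricted to the first $k-\alpha$ coordinates is the \emph{most balanced} such partition (all parts equal to $-q'-1$ or $-q'-2$), and restricted to the last $\alpha$ coordinates is the most balanced partition of $r+1-\alpha$ into $\alpha$ nonnegative parts (all parts $q-1$ or $q$).

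Next I would use the standard fact that the most balanced partition of a fixed integer into a fixed number of parts is the unique maximum in the dominance order — concretely, for any nondecreasing integer vector $(a_1,\dots,a_m)$ with fixed sum $S$, one has $a_1+\cdots+a_\ell \leq \lceil \ell S/m\rceil'$-type bounds achieved exactly by the balanced vector; equivalently, one moves $\bmu$ toward balanced by repeatedly adding $1$ to a small entry and subtracting $1$ from a large entry whenever two entries differ by $2$ or more, which strictly increases $\bmu$ in the dominance order (this is precisely the move used in the proof of Lemma~\ref{Lem:Reduction}, restricted to each block). Applying this to each of the two blocks separately, I get $(\mu_1,\dots,\mu_{k-\alpha}) \leq (\mu_{\alpha,1},\dots,\mu_{\alpha,k-\alpha})$ and $(\mu_{k-\alpha+1},\dots,\mu_k)\leq(\mu_{\alpha,k-\alpha+1},\dots,\mu_{\alpha,k})$ in the respective dominance orders. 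Finally I would check that these two block inequalities assemble into $\bmu \leq \bmu_\alpha$ globally: for $\ell \leq k-\alpha$ the partial-sum inequality is exactly the first block inequality; for $\ell > k-\alpha$, write $\sum_{i=1}^\ell \mu_i = \big(\sum_{i=1}^{k-\alpha}\mu_i\big) + \big(\sum_{i=k-\alpha+1}^\ell \mu_i\big)$, the first term equals $\sum_{i=1}^{k-\alpha}\mu_{\alpha,i}$ (the full block sums agree, both being $-(g-d+r)-(k-\alpha)$), and the second term is $\leq \sum_{i=k-\alpha+1}^\ell \mu_{\alpha,i}$ by the second block inequality.

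The step I expect to be the main (though modest) obstacle is verifying that $\bmu_\alpha$ really is nondecreasing as a whole — i.e. that $\mu_{\alpha,k-\alpha} \leq \mu_{\alpha,k-\alpha+1}$, which amounts to $-q'-1 \leq q-1$, i.e. $q+q'\geq 0$ — and, relatedly, that $\bmu_\alpha$ genuinely has exactly $\alpha$ nonnegative entries, so that it is a legitimate competitor in the class over which we are maximizing. Since $q = \lfloor (r+1)/\alpha\rfloor \geq 1$ (as $\alpha \leq r+1$... and when $\alpha = r+1$, $q=1$, $\beta=0$) and $q' \geq 0$, both checks come down to small case analyses already implicit in Definition~\ref{Def:MaxSplit}; the degenerate cases $\beta=0$ or $\beta'=0$ or $g-d+r=0$ need a quick separate glance but cause no real trouble. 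Everything else is the routine dominance-order bookkeeping sketched above.
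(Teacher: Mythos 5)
Your proposal is correct and follows essentially the same strategy as the paper: split $\bmu$ into the block of $k-\alpha$ negative entries and the block of $\alpha$ nonnegative entries, note that the two stated constraints pin each block's sum to agree with the corresponding block of $\bmu_\alpha$, show each block is dominated by the matching block of $\bmu_\alpha$ (the most balanced integer vector with that sum), and then observe that equality of block sums lets the two block-wise dominance inequalities assemble into global dominance. The one place where you diverge from the paper in a way worth flagging is in how you certify block-wise dominance: the paper invokes only the averaging inequality $\sum_{i=1}^{\ell}\mu_i\leq\frac{\ell}{k-\alpha}\sum_{i=1}^{k-\alpha}\mu_i$, while you use the balancing-move argument from the proof of Lemma~\ref{Lem:Reduction} restricted to a single block. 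Your version is in fact the sharp one: the bare averaging bound can be off by one (for instance with $k-\alpha=4$, $\beta'=2$, $\ell=2$ the averaging bound gives $-2q'-3$, whereas $\sum_{i=1}^{2}\mu_{\alpha,i}=-2q'-4$), and closing that gap requires combining sortedness with integrality, which the balancing-move argument packages up automatically. Your closing sanity checks — that $\bmu_\alpha$ is nondecreasing and genuinely has exactly $\alpha$ nonnegative entries — are the right things to verify, and are easy, as you say, from $q'\geq 0$ and $q\geq 1$.
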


\begin{proof}
By assumption, we have
\[
\sum_{i=k-\alpha+1}^k \mu_i = r+1-\alpha = \sum_{i=k-\alpha+1}^k \mu_{\alpha,i}.
\]
It follows that
\[
\sum_{i=1}^{k-\alpha} \mu_i = -(g-d+r)-(k-\alpha) = \sum_{i=1}^{k-\alpha} \mu_{\alpha,i}.
\]
Because the entries of $\bmu$ are ordered from smallest to largest, for any $\ell \leq k-\alpha$, we see that
\[
\sum_{i=1}^{\ell} \mu_i \leq \frac{\ell}{k-\alpha} \sum_{i=1}^{k-\alpha} \mu_i = \frac{-\ell(g-d+r)}{k-\alpha}-\ell. 
\]
Similarly, for any $\ell \leq \alpha$, we see that
\[
\sum_{i=k-\alpha+1}^{k-\alpha+\ell} \mu_i \leq \frac{\ell}{\alpha} \sum_{i=k-\alpha+1}^k \mu_i = \frac{\ell(r+1)}{\alpha}-\ell .
\]
By definition of $\bmu_{\alpha}$, therefore, we have $\bmu \leq \bmu_{\alpha}$.
\end{proof}

\begin{corollary}
\label{Cor:MaxSplit}
If $\bmu$ is a maximal splitting type, then $\bmu = \bmu_{\alpha}$ for some integer $\alpha$.
\end{corollary}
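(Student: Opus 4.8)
The plan is to chain together the two preceding lemmas. Let $\bmu$ be a maximal splitting type. By definition of maximality, $\bmu$ is a splitting type with $\sum_{i=1}^k \mu_i = d+1-g-k$ and $\sum_{i=1}^k \max\{0,\mu_i+1\}\geq r+1$. The first thing I would do is apply Lemma~\ref{Lem:Reduction} to promote this inequality to an equality, $\sum_{i=1}^k \max\{0,\mu_i+1\} = r+1$, so that the hypotheses of Lemma~\ref{Lem:MaxSplit} are met.

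Next, let $\alpha$ be the number of nonnegative entries of $\bmu$. Before invoking Lemma~\ref{Lem:MaxSplit} I would record that $\alpha$ lies in the range $1\leq \alpha \leq \min\{r+1,k-1\}$ for which $\bmu_\alpha$ is defined in Definition~\ref{Def:MaxSplit}: each nonnegative entry contributes at least $1$ to the sum $\sum_i\max\{0,\mu_i+1\}=r+1$, so $\alpha\leq r+1$; that sum is positive, so $\alpha\geq 1$; and, exactly as in the proof of Lemma~\ref{Lem:Reduction} (using the standing hypothesis $r>d-g$, which forces $\mu_1<-1$), at least one entry is negative, so $\alpha\leq k-1$. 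With $\alpha$ admissible, Lemma~\ref{Lem:MaxSplit} yields $\bmu\leq\bmu_\alpha$.

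Finally I would check that $\bmu_\alpha$ is itself a competitor in the maximality poset, i.e.\ that $\sum_i\mu_{\alpha,i}=d+1-g-k$ and $\sum_i\max\{0,\mu_{\alpha,i}+1\}\geq r+1$. This is a short direct computation from Definition~\ref{Def:MaxSplit}: summing the four blocks of entries and substituting $q\alpha+\beta=r+1$ and $q'(k-\alpha)+\beta'=g-d+r$ gives the degree identity on the nose and the rank sum equal to $r+1$ (here one uses $q\geq 1$ and $q'\geq 0$, which also shows $\bmu_\alpha$ has exactly $\alpha$ nonnegative entries). Since $\bmu$ is maximal and $\bmu\leq\bmu_\alpha$ with $\bmu_\alpha$ in the poset, we conclude $\bmu=\bmu_\alpha$.

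There is no genuine obstacle here — the corollary is essentially a formal consequence of Lemmas~\ref{Lem:Reduction} and~\ref{Lem:MaxSplit}. The only points needing a moment's care are verifying that the relevant $\alpha$ falls in the admissible range, and confirming that $\bmu_\alpha$ genuinely satisfies the two defining constraints of the poset (so that maximality of $\bmu$ can be tested against it); both are routine.
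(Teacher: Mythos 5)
Your proof is correct and follows the same route as the paper: apply Lemma~\ref{Lem:Reduction} to upgrade the inequality to an equality, set $\alpha$ equal to the number of nonnegative entries of $\bmu$, invoke Lemma~\ref{Lem:MaxSplit} to obtain $\bmu \leq \bmu_\alpha$, and conclude by maximality. The extra verifications you supply --- that $\alpha$ falls in the admissible range $1 \leq \alpha \leq \min\{r+1,k-1\}$ so that $\bmu_\alpha$ is defined, and that $\bmu_\alpha$ itself satisfies the two constraints defining the poset --- are genuinely needed for the last step and are left implicit in the paper's one-line argument.
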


\begin{proof}
Let $\bmu$ be a maximal splitting type. By Lemma~\ref{Lem:Reduction}, we see that 
\[
\sum_{i=1}^k \max \{ 0, \mu_i + 1 \} = r+1.
\]
Let $\alpha$ denote the number of nonnegative entries of $\bmu$.  By Lemma~\ref{Lem:MaxSplit}, we have $\bmu \leq \bmu_{\alpha}$, but since $\bmu$ is maximal, it follows that $\bmu = \bmu_{\alpha}$.
\end{proof}

We now show that, if $\alpha < \min \{k-(g-d+r),r+1 \}$, then $\bmu_{\alpha}$ is not maximal.

\begin{lemma}
\label{Lem:LowerBoundOnAlpha}
If $\alpha < \min \{k-(g-d+r),r+1 \}$, then $\bmu_{\alpha} < \bmu_{\alpha+1}$.
\end{lemma}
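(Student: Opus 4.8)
The plan is to compare $\bmu_\alpha$ and $\bmu_{\alpha+1}$ directly from their explicit description in Definition~\ref{Def:MaxSplit}, showing that $\bmu_\alpha$ is obtained from $\bmu_{\alpha+1}$ by a sequence of "moves" of the form (add $1$ to some entry, subtract $1$ from a later one that is at least as large), which is exactly how one descends in the dominance order. The key numerical inputs are the two conserved sums: by the computation in the proof of Lemma~\ref{Lem:MaxSplit}, both $\bmu_\alpha$ and $\bmu_{\alpha+1}$ have total sum $d+1-g-k$, and moreover the sum of the bottom $k-\alpha$ entries of $\bmu_\alpha$ equals $-(g-d+r)-(k-\alpha)$, while the sum of the bottom $k-\alpha-1$ entries of $\bmu_{\alpha+1}$ equals $-(g-d+r)-(k-\alpha-1)$. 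The hypothesis $\alpha < k-(g-d+r)$, i.e. $g-d+r < k-\alpha$, guarantees that the "low block" of $\bmu_\alpha$ has length $k-\alpha$ strictly greater than $g-d+r=q'(k-\alpha)+\beta'$ forces $q' = 0$ when $\alpha = k-(g-d+r)-1$ and more generally keeps $q'$ small; the hypothesis $\alpha < r+1$ guarantees the "high block" is genuinely present on both sides. I would first record these consequences as a short preliminary computation.

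Next I would carry out the comparison of partial sums $S_\ell(\bmu) := \mu_1 + \cdots + \mu_\ell$. For $\ell \leq k - \alpha - 1$: here the first $\ell$ entries of $\bmu_{\alpha+1}$ lie in its low block of length $k-\alpha-1$, which by the conserved-sum identity is "more negative on average" than the low block of $\bmu_\alpha$ spread over $k-\alpha$ slots; a convexity/averaging estimate exactly like the one in Lemma~\ref{Lem:MaxSplit} gives $S_\ell(\bmu_{\alpha+1}) \leq S_\ell(\bmu_\alpha)$. For $\ell = k-\alpha$: one checks $S_{k-\alpha}(\bmu_{\alpha+1})$ against $S_{k-\alpha}(\bmu_\alpha) = -(g-d+r)-(k-\alpha)$ using that the $(k-\alpha)$-th entry of $\bmu_{\alpha+1}$ is its smallest high-block entry $q-1$ (where $r+1 = q\alpha + \cdots$, recomputed for the parameter $\alpha$ not $\alpha+1$), and the inequality $q-1 \geq$ (average of the low block of $\bmu_{\alpha+1}$) holds precisely because $\alpha+1 \leq r+1$. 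For $k-\alpha < \ell < k$: since the top entries of both splitting types are balanced and the total sums agree, once the partial sum inequality holds at $\ell = k-\alpha$ it is preserved — the remaining entries of $\bmu_\alpha$ (its high block) are on average larger than those of $\bmu_{\alpha+1}$, so the gap $S_\ell(\bmu_\alpha) - S_\ell(\bmu_{\alpha+1})$ only grows or stays nonnegative until it returns to $0$ at $\ell = k$. Finally, strictness ($\bmu_\alpha \neq \bmu_{\alpha+1}$) is clear since they have a different number of nonnegative entries, $\alpha$ versus $\alpha+1$.

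The main obstacle I anticipate is the bookkeeping in the middle range and at the crossover index $\ell = k-\alpha$: the division-algorithm remainders $\beta, \beta', q, q'$ differ between the two splitting types (the parameters for $\bmu_{\alpha+1}$ use $k-\alpha-1$ and $\alpha+1$ in the denominators), so one cannot simply match entries slot by slot, and the convexity estimate has to be applied with some care to avoid off-by-one errors at the block boundaries. A clean way to sidestep most of this is to work not with the explicit entries but with the characterization of $\bmu_\alpha$ from Lemma~\ref{Lem:MaxSplit}: $\bmu_\alpha$ is the $\leq$-maximum among all splitting types with the correct total sum, with correct sum of bottom $k-\alpha$ entries, and with exactly $\alpha$ nonnegative entries. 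Then it suffices to exhibit a single splitting type $\blam$ with $\alpha+1$ nonnegative entries satisfying $\bmu_\alpha \leq \blam$ and $h(\blam) = r+1$ — for instance, take $\blam$ obtained from $\bmu_\alpha$ by raising a suitable copy of $-q'-1$ (or $-q'-2$) up to $q-1$-ish and compensating, then invoke Lemma~\ref{Lem:MaxSplit} applied to $\blam$ to conclude $\blam \leq \bmu_{\alpha+1}$, giving $\bmu_\alpha \leq \blam \leq \bmu_{\alpha+1}$ with the first inequality strict because of the count of nonnegative entries. I would pursue this second route, as it reduces the whole lemma to one explicit construction plus two applications of already-proved results.
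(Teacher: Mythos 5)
Your second route (which you say you would pursue) is exactly the paper's proof strategy: produce a splitting type $\blam$ with $\alpha+1$ nonnegative entries, $h(\blam)=r+1$, and the same total sum, satisfying $\bmu_\alpha < \blam$; then apply Lemma~\ref{Lem:MaxSplit} to $\blam$ to conclude $\blam \leq \bmu_{\alpha+1}$, with strictness coming from the differing count of nonnegative entries. Two points in your sketch need tightening, and the first is substantive. You write that $g-d+r < k-\alpha$ ``forces $q'=0$ when $\alpha = k-(g-d+r)-1$ and more generally keeps $q'$ small''; in fact $g-d+r<k-\alpha$ \emph{always} forces $q'=0$ and $\beta'=g-d+r$, since $q'=\lfloor (g-d+r)/(k-\alpha)\rfloor$. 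This is the entire point of the hypothesis: it gives $\mu_{\alpha,k-\alpha}=-q'-1=-1$, so a single unit increment suffices to create a new nonnegative entry. Second, you should not raise any entry ``up to $q-1$-ish''; an overshooting move makes it harder to verify sortedness, the nonnegative-entry count, and $h=r+1$ all at once. The clean choice is the unit move: add $1$ to $\mu_{\alpha,k-\alpha}$ (turning $-1$ into $0$) and subtract $1$ from a single top entry, namely $\mu_{\alpha,k-\beta+1}$ when $\beta>0$ and $\mu_{\alpha,k-\alpha+1}$ when $\beta=0$. The hypothesis $\alpha<r+1$ ensures $q\geq 1$ (and $q\geq 2$ when $\beta=0$), so the lowered entry stays nonnegative, the vector remains sorted with exactly $\alpha+1$ nonnegative entries, and a unit move of this form automatically yields $\bmu_\alpha < \blam$ in the dominance order. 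With these corrections your plan coincides with the paper's argument.
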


\begin{proof}
Since $g-d+r < k-\alpha$, by definition we have $\mu_{\alpha,k-\alpha} = -1$.  If $r+1$ is not divisible by $\alpha$, consider the splitting type $\bmu$ obtained from $\bmu_{\alpha}$ by adding 1 to $\mu_{\alpha,k-\alpha}$ and subtracting 1 from $\mu_{\alpha,k-\beta+1}$.  On the other hand, if $r+1$ is divisible by $\alpha$, then since $\alpha < r+1$, we must have $\mu_{\alpha,k-\alpha+1} > 0$.  In this case, consider the splitting type $\bmu$ obtained from $\bmu_{\alpha}$ by adding 1 to $\mu_{\alpha,k-\alpha}$ and subtracting 1 from $\mu_{\alpha,k-\alpha+1}$.  In either case, we see that $\bmu$ is a splitting type with $\alpha+1$ nonnegative entries, satisfying $\bmu_{\alpha} < \bmu$.  By Lemma~\ref{Lem:MaxSplit}, we have $\bmu_{\alpha} < \bmu \leq \bmu_{\alpha+1}$.
\end{proof}

Finally, we see that the remaining splitting types $\bmu_{\alpha}$ are maximal.

\begin{proposition}
\label{Prop:Incomparable}
The splitting type $\bmu$ is maximal if and only if $\bmu = \bmu_{\alpha}$ for some integer $\alpha$ satisfying either $\alpha \geq k-(g-d+r)$ or $\alpha = r+1$.
\end{proposition}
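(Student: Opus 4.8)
The plan is to derive the proposition from the results already in hand --- Corollary~\ref{Cor:MaxSplit}, Lemmas~\ref{Lem:Reduction},~\ref{Lem:MaxSplit}, and~\ref{Lem:LowerBoundOnAlpha} --- after first establishing that the candidate maximal types $\bmu_\alpha$ (those with $\alpha \geq k-(g-d+r)$ or $\alpha = r+1$) are pairwise incomparable. The forward implication is nearly immediate: if $\bmu$ is maximal then $\bmu = \bmu_\alpha$ for some integer $\alpha$ by Corollary~\ref{Cor:MaxSplit}, and $\sum_i \max\{0,\mu_i+1\} = r+1$ by Lemma~\ref{Lem:Reduction}, so $\alpha \leq r+1$ because each of the $\alpha$ nonnegative entries of $\bmu$ contributes at least $1$ to that sum. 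Were $\alpha < \min\{k-(g-d+r),r+1\}$, then Lemma~\ref{Lem:LowerBoundOnAlpha} would give $\bmu_\alpha < \bmu_{\alpha+1}$, contradicting maximality; hence $\alpha \geq k-(g-d+r)$ or $\alpha = r+1$.

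The core of the converse is the following claim, which I would prove first: if $1 \leq \alpha < \alpha' \leq \min\{r+1,k-1\}$ and $\alpha \geq k-(g-d+r)$, then $\bmu_\alpha$ and $\bmu_{\alpha'}$ are incomparable in the dominance order. I would prove this by comparing partial sums. Write $S_\ell(\bmu) := \mu_1 + \cdots + \mu_\ell$; from Definition~\ref{Def:MaxSplit} one reads off that for every valid index $j$ the first $k-j$ entries of $\bmu_j$ are exactly its negative entries and that $S_{k-j}(\bmu_j) = -(g-d+r)-(k-j)$. Since $\alpha < \alpha'$, the $k-\alpha'$ smallest entries of $\bmu_\alpha$ lie among its negative entries; and since $\alpha \geq k-(g-d+r)$, i.e. $g-d+r \geq k-\alpha$, the quotient $q' = \lfloor (g-d+r)/(k-\alpha)\rfloor$ is at least $1$, so every negative entry of $\bmu_\alpha$ is at most $-2$. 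Removing the $\alpha'-\alpha \geq 1$ entries $\mu_{\alpha,\,k-\alpha'+1},\dots,\mu_{\alpha,\,k-\alpha}$, each at most $-2$, from $S_{k-\alpha}(\bmu_\alpha)$ yields
\[
S_{k-\alpha'}(\bmu_\alpha) \;\geq\; -(g-d+r)-(k-\alpha)+2(\alpha'-\alpha) \;=\; -(g-d+r)-(k-\alpha')+(\alpha'-\alpha) \;>\; S_{k-\alpha'}(\bmu_{\alpha'}),
\]
so $\bmu_\alpha \not\leq \bmu_{\alpha'}$. Conversely, the $k-\alpha$ smallest entries of $\bmu_{\alpha'}$ are all $k-\alpha'$ of its negative entries together with $\alpha'-\alpha$ nonnegative ones, so $S_{k-\alpha}(\bmu_{\alpha'}) \geq -(g-d+r)-(k-\alpha') > -(g-d+r)-(k-\alpha) = S_{k-\alpha}(\bmu_\alpha)$, so $\bmu_{\alpha'} \not\leq \bmu_\alpha$. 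This proves the claim.

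To finish the converse, suppose $\alpha \leq \min\{r+1,k-1\}$ with $\alpha \geq k-(g-d+r)$ or $\alpha = r+1$, and suppose $\bmu_\alpha \leq \blam$ for a splitting type $\blam$ with $\sum_i \lambda_i = d+1-g-k$ and $\sum_i \max\{0,\lambda_i+1\} \geq r+1$; I must show $\blam = \bmu_\alpha$. As in the proof of Lemma~\ref{Lem:Reduction} there is a splitting type $\blam' \geq \blam$ with the same sum and $\sum_i \max\{0,\lambda'_i+1\} = r+1$; if $\alpha''$ denotes its number of nonnegative entries, then Lemma~\ref{Lem:MaxSplit} gives $\blam' \leq \bmu_{\alpha''}$, and iterating Lemma~\ref{Lem:LowerBoundOnAlpha} as needed produces an index $\alpha^* \geq \alpha''$ with $\bmu_{\alpha''} \leq \bmu_{\alpha^*}$ and with $\alpha^* \geq k-(g-d+r)$ or $\alpha^* = r+1$ (the iteration terminates because the index increases and stays $\leq \min\{r+1,k-1\}$, and that top value is itself admissible). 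Then $\bmu_\alpha \leq \bmu_{\alpha^*}$ with $\alpha,\alpha^*$ both admissible and both $\leq \min\{r+1,k-1\}$. If $\alpha \neq \alpha^*$, the smaller of the two, being admissible and strictly below the larger (which is at most $r+1$), satisfies $\geq k-(g-d+r)$, so the claim applies and contradicts this comparison. Hence $\alpha = \alpha^*$, so $\bmu_\alpha \leq \blam \leq \bmu_\alpha$ and $\blam = \bmu_\alpha$; thus $\bmu_\alpha$ is maximal, and combined with Corollary~\ref{Cor:MaxSplit} these are the only maximal splitting types.

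The one genuinely delicate point is inside the incomparability claim: extracting a \emph{strict} inequality from the partial-sum comparison despite the floor function in the definition of $\bmu_\alpha$. This is exactly where the hypothesis $\alpha \geq k-(g-d+r)$ enters --- it forces $q' \geq 1$, hence that each deleted entry of $\bmu_\alpha$ has absolute value at least $2$, which provides precisely the slack needed. Everything else is routine assembly of the lemmas already proved, so I expect the claim to be the only step requiring real care.
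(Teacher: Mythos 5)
Your proof is correct, and in the key step it takes a genuinely different route from the paper. Both arguments reduce, via Corollary~\ref{Cor:MaxSplit} and Lemma~\ref{Lem:LowerBoundOnAlpha}, to showing that two types $\bmu_\alpha$ and $\bmu_{\alpha'}$ with $\alpha<\alpha'$ both admissible are incomparable. The paper does this by locating the largest and the smallest index at which the two vectors differ and showing each forces a different sign for the comparison (using the arithmetic of the quotients $q_\alpha,q_\gamma$ and remainders $\beta_\alpha,\beta_\gamma$ to pin down that sign). You instead compare the two partial sums $S_{k-\alpha'}$ and $S_{k-\alpha}$ directly, observing that $\alpha\geq k-(g-d+r)$ forces $q'\geq 1$ and hence every negative entry of $\bmu_\alpha$ to be at most $-2$, which gives the extra unit of slack needed for strictness of $S_{k-\alpha'}(\bmu_\alpha)>S_{k-\alpha'}(\bmu_{\alpha'})$; the reverse inequality $S_{k-\alpha}(\bmu_{\alpha'})>S_{k-\alpha}(\bmu_\alpha)$ is even easier since it only needs the nonnegativity of the extra entries. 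Your version is arguably cleaner — it isolates exactly where the hypothesis $\alpha\geq k-(g-d+r)$ enters and avoids case analysis on the quotients — whereas the paper's version reads off more structural information about where the two vectors agree. Your converse direction (that every admissible $\bmu_\alpha$ is actually maximal, via chasing $\blam\geq\bmu_\alpha$ up to some $\bmu_{\alpha^*}$ and invoking incomparability) is also spelled out more explicitly than in the paper, where it is left implicit that the incomparability statement ``suffices''; this is a genuine improvement in exposition, though the underlying logic is the same.
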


\begin{proof}
By Corollary~\ref{Cor:MaxSplit}, every maximal splitting type is of the form $\bmu_{\alpha}$ for some integer $\alpha$.  By Lemma~\ref{Lem:LowerBoundOnAlpha}, if $\alpha < k-(g-d+r)$ and $\alpha \neq r+1$, then $\bmu_{\alpha}$ is not maximal.  It therefore suffices to show that, if $\alpha \neq \gamma$ are both greater than or equal to $k-(g-d+r)$, then $\bmu_{\alpha}$ and $\bmu_{\gamma}$ are incomparable.

Without loss of generality, assume that $\alpha < \gamma$.  We write
\begin{align*}
r+1 &= q_{\alpha} \alpha + \beta_{\alpha} & 0 \leq \beta_{\alpha} < \alpha \notag \\
&= q_{\gamma} \gamma + \beta_{\gamma} & 0 \leq \beta_{\gamma} < \gamma . \notag
\end{align*}
Since $\alpha < \gamma$, we see that $q_{\alpha} \geq q_{\gamma}$.  Moreover, since $\gamma \leq r+1$, we see that both $q_{\alpha}$ and $q_{\gamma}$ are positive.  It follows that, if $q_{\alpha} = q_{\gamma}$, then $\beta_{\alpha} > \beta_{\gamma}$.  Thus, if $j$ is the largest integer such that $\mu_{\alpha,j} \neq \mu_{\gamma,j}$, then $\mu_{\alpha,j} > \mu_{\gamma,j}$.  If $\bmu_{\alpha}$ and $\bmu_{\gamma}$ are comparable, then we see that $\bmu_{\alpha} < \bmu_{\gamma}$.

Since $k-\alpha \leq g-d+r$, we see by a similar argument that if $j'$ is the smallest integer such that $\mu_{\alpha,j'} \neq \mu_{\gamma,j'}$, then $\mu_{\alpha,j'} > \mu_{\gamma,j'}$.  It follows that if $\bmu_{\alpha}$ and $\bmu_{\gamma}$ are comparable, then $\bmu_{\alpha} > \bmu_{\gamma}$.  Combining these two observations, we see that $\bmu_{\alpha}$ and $\bmu_{\gamma}$ are incomparable.
\end{proof}

\section{Divisor Theory of Chains of Loops}
\label{Sec:Divisors}

In this section, we survey the theory of special divisors on chains of loops, as discussed in \cite{PfluegerkGonal, PfluegerCycles, JensenRanganthan}.  We refer the reader to those papers for more details.  For a more general overview of divisors on tropical curves, we refer the reader to \cite{Baker, BakerJensen}.  For the uninitiated, we will not require most of the material of these papers; we will use only the classification of special divisors on chains of loops from \cite{PfluegerkGonal, PfluegerCycles}.

\subsection{Chains of Loops and Torsion Profiles}

Let $\Gamma$ be a chain of $g$ loops with bridges, as pictured in Figure~\ref{Fig:TheGraph}.  Each of the $g$ loops consists of two edges.  We denote the lengths of the top and bottom edge of the $j$th loop by $\ell_j$ and $m_j$, respectively.  The Brill-Noether theory of chains of loops is governed by the \emph{torsion orders} of the loops.

\begin{figure}[h!]
\begin{tikzpicture}

\draw (0,0) circle (1);
\draw (1,0)--(2,0);
\draw (3,0) circle (1);
\draw (4,0)--(5,0);
\draw (6,0) circle (1);
\draw (7,0)--(8,0);
\draw (9,0) circle (1);
\draw (10,0)--(11,0);
\draw (12,0) circle (1);

\draw [<->] (7.15,0.5) arc[radius = 1.15, start angle=10, end angle=170];
\draw [<->] (7.15,-0.5) arc[radius = 1.15, start angle=-9, end angle=-173];

\draw (6,1.75) node {\footnotesize$\ell_j$};
\draw (6,-1.75) node {\footnotesize$m_j$};

\draw [ball color=black] (-1,0) circle (0.5mm);
\draw [ball color=black] (1,0) circle (0.5mm);

\draw [ball color=black] (2,0) circle (0.5mm);
\draw [ball color=black] (4,0) circle (0.5mm);

\draw [ball color=black] (5,0) circle (0.5mm);
\draw [ball color=black] (7,0) circle (0.5mm);

\draw [ball color=black] (8,0) circle (0.5mm);
\draw [ball color=black] (10,0) circle (0.5mm);

\draw [ball color=black] (11,0) circle (0.5mm);
\draw [ball color=black] (13,0) circle (0.5mm);
\end{tikzpicture}
\caption{The chain of loops $\Gamma$.}
\label{Fig:TheGraph}
\end{figure}
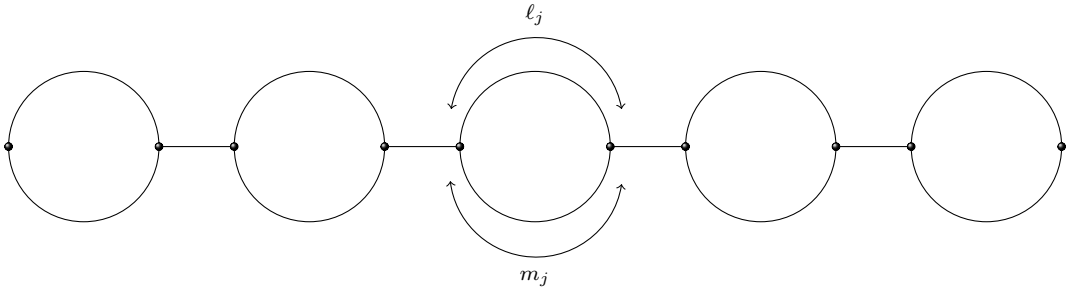

\begin{definition} \cite[Definition~1.9]{PfluegerCycles}
\label{Def:TorsionOrder}
If $\ell_j + m_j$ is an irrational multiple of $m_j$, then the $j$th \emph{torsion order} $\tau_j$ of $\Gamma$ is 0.  Otherwise, we define $\tau_j$ to be the minimum positive integer such that $\tau_j m_j$ is an integer multiple of $\ell_j + m_j$.  The sequence $\bm{\tau} = (\tau_1 , \ldots , \tau_g)$ is called the \emph{torsion profile} of $\Gamma$.
\end{definition}

\noindent For the remainder of this paper, we assume that the torsion profile of $\Gamma$ is given by
\begin{displaymath}
\tau_i := \left\{ \begin{array}{ll}
0 & \textrm{if $i<k$ or $i>g-k+1$} \\
k & \textrm{otherwise.}
\end{array}\right.
\end{displaymath}
This chain of loops with this torsion profile possesses a distinguished divisor class of rank 1 and degree $k$, given by $g^1_k = kv_k$, where $v_k$ is the lefthand vertex of the $k$th loop.

\begin{remark}
Note that, unlike \cite[Definition~2.1]{PfluegerkGonal}, we do \emph{not} require the first $k-1$ loops or the last $k-1$ loops to have torsion order $k$.  This choice does not affect the gonality, or more generally the Brill-Noether theory, of this metric graph.  A primary reason for this choice is that the space of such metric graphs has dimension equal to that of the Hurwitz space, namely $2g+2k-5$.
\end{remark}

In \cite{PfluegerCycles}, Pflueger classifies the special divisor classes on chains of loops.  This classification generalizes that of special divisor classes on generic chains of loops in \cite{CDPR}.  Specifically, Pflueger shows that $W^r_d (\Gamma)$ is a union of tori, where the tori are indexed by certain types of tableaux.  While Pflueger's analysis applies to chains of loops with arbitrary torsion profiles, we record it only for the torsion profile above.  For ease of notation, given a positive integer $a$ we write $[a]$ for the finite set $\{ 1, \ldots , a \}$.

\begin{definition} \cite[Definition~2.5]{PfluegerkGonal}
\label{Def:Displacement}
Let $a$ and $b$ be positive integers.  Recall that a \emph{tableau} on $[a]\times[b]$ with alphabet $[g]$ is a function $t: [a]\times[b] \to [g]$ satisfying:
\[
t(x,y) < t(x,y+1) \mbox{ and } t(x,y) < t(x+1,y) \mbox{ for all } (x,y).
\]
A tableau $t$ is \emph{standard} if $t$ is injective.  A tableau $t$ is called a \emph{$k$-uniform displacement tableau} if, whenever 
\[
t(x,y) = t(x',y'), \mbox{ we have } x-y = x'-y' \pmod{k}.
\]
\end{definition}

It is standard to depict a tableau on $[a]\times[b]$ as a rectangle with $a$ columns and $b$ rows, where the box in position $(x,y)$ is filled with the symbol $t(x,y)$.  We draw our tableaux according to the English convention, so that the box $(1,1)$ appears in the upper lefthand corner.

\subsection{Coordinates on $\Pic (\Gamma)$}

A nice feature of the chain of loops is that its Picard group has a natural system of coordinates.  On the $j$th loop, let $\langle \xi \rangle_j$ denote the point located $\xi m_j$ units from the righthand vertex in the counterclockwise direction.  Note that
\[
\langle \xi \rangle_j = \langle \eta \rangle_j \mbox{ if and only if } \xi = \eta \pmod{\tau_j}. 
\]
By the tropical Abel-Jacobi theorem \cite{BakerNorine}, every divisor class $D$ of degree $d$ on $\Gamma$ has a \emph{unique} representative of the form
\[
(d-g)\langle 0 \rangle_g + \sum_{j=1}^g \langle \xi_j (D) \rangle_j , 
\]
for some real numbers $\xi_j (D)$.  Because this expression is unique, the functions $\xi_j$ form a system of coordinates on $\Pic^d (\Gamma)$.  This representative of the divisor class $D$ is known as the \emph{break divisor} representative \cite{MikhalkinZharkov08,ABKS}.

\begin{definition} \cite[Definition~3.5]{PfluegerCycles}
Given a degree $d$ and a $k$-uniform displacement tableau $t$ with alphabet $[g]$, we define the coordinate subtorus $\mathbb{T}(t)$ as follows.
\[
\mathbb{T}(t) := \{ D \in \Pic^d (\Gamma) \vert \xi_{t(x,y)} (D) = y-x \pmod{k} \} .
\]
\end{definition}

Note that the coordinate $\xi_j (D)$ of a divisor class $D$ in $\mathbb{T}(t)$ is determined if and only if $j$ is in the image of $t$.  It follows that the codimension of $\mathbb{T}(t)$ in $\Pic^d (\Gamma)$ is the number of distinct symbols in $t$.  The main combinatorial result of \cite{PfluegerCycles} is a classification of special divisors on $\Gamma$.

\begin{theorem} \cite[Theorem~1.4]{PfluegerCycles}
\label{thm:Classification}
For any positive integers $r$ and $d$ satisfying $r > d-g$, we have
\[
W^r_d (\Gamma) = \bigcup \mathbb{T}(t),
\]
where the union is over $k$-uniform displacement tableaux on $[r+1]\times[g-d+r]$ with alphabet $[g]$.
\end{theorem}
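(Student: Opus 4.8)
The plan is to reduce the computation of the Baker--Norine rank $r_\Gamma(D)$ of a degree $d$ divisor class $D$ on $\Gamma$ to a combinatorial recursion that sweeps across the chain one loop at a time, and then to match the output of that recursion with the existence of a $k$-uniform displacement tableau $t$ satisfying $D \in \mathbb{T}(t)$. Throughout I would use the two standard facts: that $r_\Gamma(D) \geq r$ if and only if, for every effective divisor $E$ of degree $r$ on $\Gamma$, the class $D - E$ is linearly equivalent to an effective divisor; and that effectivity of a divisor class can be tested on its $v$-reduced representative, computed by Dhar's burning algorithm, for any chosen point $v$.

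First I would set up the sweep. For $0 \le i \le g$, let $v_i$ be the point separating the first $i$ loops from the remaining $g-i$ loops (so $v_0$ and $v_g$ are the endpoints of $\Gamma$), and for a divisor class $D$ let $D_i$ denote its $v_i$-reduced representative. The key local fact is that $D_i$ is obtained from $D_{i-1}$ by a chip-firing operation supported on the $i$-th loop alone, and this operation is completely determined by the coordinate $\xi_i(D)$ together with the torsion order $\tau_i$ (which here is either $0$ or $k$). Making this ``transfer across one loop'' rule precise, by a direct analysis of Dhar's algorithm on a single loop with a bridge, is the technical core.

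Next I would track the state of the sweep. After processing loops $1,\dots,i$, the relevant data is, for each effective divisor $E$ supported on those loops, whether $D-E$ is still consistent with being made effective using the remaining loops; the set of such ``usable'' $E$ organizes into a staircase region $U_i \subseteq [r+1]\times[g-d+r]$. The transition $U_{i-1} \rightsquigarrow U_i$ is: a box $(x,y)$ is added precisely when its upper and left neighbors are already present \emph{and} $\xi_i(D) \equiv y-x \pmod{\tau_i}$; otherwise loop $i$ is a ``lingering'' step and $U_i = U_{i-1}$. Recording, for each added box, the index $i$ of the loop that added it yields a filling $t$ of (part of) $[r+1]\times[g-d+r]$ by symbols from $[g]$; the monotonicity of the staircase makes $t$ a tableau, and the congruence condition forces any two boxes with the same symbol $i$ to have $x-y$ constant modulo $\tau_i = k$, so $t$ is a $k$-uniform displacement tableau with $\xi_{t(x,y)}(D) \equiv y-x \pmod{k}$, i.e.\ $D \in \mathbb{T}(t)$.

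Finally, $r_\Gamma(D) \geq r$ holds if and only if the terminal region $U_g$ is the full rectangle $[r+1]\times[g-d+r]$, which by the above is equivalent to the existence of a $k$-uniform displacement tableau $t$ on $[r+1]\times[g-d+r]$ with $D \in \mathbb{T}(t)$; this gives $W^r_d(\Gamma) \subseteq \bigcup_t \mathbb{T}(t)$. The reverse inclusion is the easy direction: given such a $t$ and any effective $E$ of degree $r$, reading the tableau loop-by-loop produces an explicit chip-firing equivalence exhibiting $D-E$ as effective, so $r_\Gamma(D) \geq r$. (The codimension statement, that $\mathbb{T}(t)$ has codimension equal to the number of distinct symbols of $t$, is immediate from the coordinate description of $\mathbb{T}(t)$ recorded in the excerpt.) The main obstacle, as indicated, is the exact ``transfer across one loop'' computation: one must verify with no off-by-one errors that crossing a loop of torsion order $\tau$ changes the staircase region exactly according to $\xi_i(D) \equiv y-x \pmod{\tau}$, and correctly account for the lingering steps. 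This is where the torsion orders enter essentially, the generic case $\tau_i = 0$ recovering \cite{CDPR} and the case $\tau_i = k$ being what permits symbols to repeat, with $k$-uniformity as the precise obstruction to reuse. This analysis is carried out in \cite{PfluegerCycles}.
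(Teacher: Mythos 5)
This theorem is cited from Pflueger's paper \cite{PfluegerCycles} rather than proved in the present one, so there is no in-paper argument to compare against. Your sketch correctly captures the structure of Pflueger's proof (which itself extends \cite{CDPR}): a loop-by-loop sweep using $v_i$-reduced divisors and Dhar's burning algorithm, a transfer rule across each loop governed by $\xi_i(D)$ and the torsion order $\tau_i$, a growing ``staircase'' region that records which boxes of $[r+1]\times[g-d+r]$ have been filled and by which loop, and the observation that $r_\Gamma(D)\geq r$ iff the terminal region is the full rectangle. One small caveat: in Pflueger's actual bookkeeping the state after $i$ loops is phrased in terms of a lattice path (equivalently a partition) and its ``displacements,'' with the Shape Lemma for Minima doing the work you describe as the staircase transition; your formulation in terms of ``usable effective divisors $E$'' is a heuristically equivalent but not literally identical encoding, and translating between the two would be part of making the argument rigorous. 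As you say, the delicate part is the single-loop transfer computation with no off-by-one errors, and that is precisely what occupies the bulk of the cited paper.
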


Notably, Pflueger does not consider the containment relations between the various tori $\mathbb{T}(t)$.  These containment relations are the primary concern of Sections~\ref{Sec:Max} and~\ref{Sec:NonMax}.  We note the following, which will be explored in more detail in these later sections.

\begin{lemma}\label{lem:Containment}
Let $t$ and $t'$ be $k$-uniform displacement tableaux on $[a]\times[b]$.  Then $\mathbb{T}(t)\subseteq \mathbb{T}(t')$ if and only if
\begin{enumerate}
    \item every symbol in $t'$ is a symbol in $t$, and
    \item if $t(x,y)=t'(x',y')$, then $x-y = x'-y'\pmod k$.
\end{enumerate}
\end{lemma}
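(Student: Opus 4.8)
The plan is to argue directly from the definition of the coordinate subtorus $\mathbb{T}(t)$. Recall that $\mathbb{T}(t)$ is cut out inside $\Pic^d(\Gamma)$ by the conditions $\xi_{t(x,y)}(D) \equiv y-x \pmod k$ as $(x,y)$ ranges over $[a]\times[b]$; equivalently, for each symbol $s$ in the image of $t$, there is a single linear condition $\xi_s(D) \equiv c_s \pmod k$, where $c_s := y - x \pmod k$ for any box $(x,y)$ with $t(x,y) = s$ (this is well-defined precisely because $t$ is a $k$-uniform displacement tableau). Since the $\xi_j$ are free coordinates on $\Pic^d(\Gamma)$ (with $\xi_j$ taking values in $\mathbb{R}/\tau_j\mathbb{Z}$, and $\tau_j \in \{0,k\}$, so the congruence mod $k$ makes sense), $\mathbb{T}(t)$ is exactly the set of $D$ satisfying the system $\{\xi_s(D) \equiv c_s \pmod k : s \in \mathrm{im}(t)\}$.

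First I would prove the ``if'' direction. Suppose (1) and (2) hold, and let $D \in \mathbb{T}(t)$. I need to check $\xi_{t'(x',y')}(D) \equiv y' - x' \pmod k$ for every box $(x',y')$ of $t'$. By (1), the symbol $s := t'(x',y')$ also appears in $t$, say $s = t(x,y)$. Since $D \in \mathbb{T}(t)$, we have $\xi_s(D) \equiv y - x \pmod k$. By (2), $x - y \equiv x' - y' \pmod k$, hence $\xi_s(D) \equiv y' - x' \pmod k$, which is exactly the defining condition for membership in $\mathbb{T}(t')$. Thus $D \in \mathbb{T}(t')$, proving $\mathbb{T}(t) \subseteq \mathbb{T}(t')$.

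For the ``only if'' direction, suppose $\mathbb{T}(t) \subseteq \mathbb{T}(t')$. For (1): if some symbol $s$ appears in $t'$ but not in $t$, then $\mathbb{T}(t)$ imposes no constraint on the coordinate $\xi_s$, so I can choose $D \in \mathbb{T}(t)$ with $\xi_s(D)$ violating the congruence that $\mathbb{T}(t')$ demands for $s$ — concretely, pick any $D_0 \in \mathbb{T}(t)$ (nonempty since the defining congruences on distinct coordinates are independent) and modify only its $\xi_s$-coordinate, which keeps it in $\mathbb{T}(t)$ but pushes it out of $\mathbb{T}(t')$, contradicting the containment. For (2): suppose $t(x,y) = t'(x',y') = s$ but $x - y \not\equiv x' - y' \pmod k$. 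Every $D \in \mathbb{T}(t)$ has $\xi_s(D) \equiv y - x \pmod k$, whereas membership in $\mathbb{T}(t')$ forces $\xi_s(D) \equiv y' - x' \pmod k$; since these residues differ mod $k$ and $\tau_s = k$ (the coordinate $\xi_s$ genuinely lives mod $k$, not mod $0$, because $s$ lies in the range $[k, g-k+1]$ — any symbol appearing in a box of a $k$-uniform displacement tableau on $[r+1]\times[g-d+r]$ with a repeated displacement class must have torsion order $k$; and if $\tau_s = 0$ then by $k$-uniformity $s$ can appear only once in each tableau and the displacement is literally determined, handled similarly), no $D \in \mathbb{T}(t)$ lies in $\mathbb{T}(t')$, so $\mathbb{T}(t) \not\subseteq \mathbb{T}(t')$ unless $\mathbb{T}(t)$ is empty — and one checks $\mathbb{T}(t)$ is always nonempty. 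This contradiction establishes (2).

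The one point requiring care — the main obstacle — is the bookkeeping around torsion orders $\tau_j \in \{0, k\}$ in the ``only if'' direction: one must make sure that a congruence condition mod $k$ on a coordinate $\xi_j$ with $\tau_j = 0$ is still a genuine (nonvacuous, and in fact here vacuous-in-the-right-way) constraint, and that ``$x - y \equiv x' - y' \pmod k$'' is the correct relation even when the relevant loop has torsion order $0$. In the $\tau_j = 0$ case a symbol can occur at most once across each of $t$ and $t'$ by the $k$-uniform displacement condition combined with the fact that distinct boxes with equal displacement mod $k$ would force $\tau_j \mid k$; so the ``congruence'' is really an equality of specified real values, and the argument goes through verbatim with ``$\pmod k$'' replaced by ``$\pmod{\tau_j}$''. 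I would fold this into the definition of $c_s$ at the outset so the two cases are handled uniformly.
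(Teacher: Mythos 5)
The paper states Lemma~\ref{lem:Containment} without proof, treating it as immediate from Pflueger's definition of the coordinate subtorus $\mathbb{T}(t)$. Your argument correctly unpacks that definition and handles both directions by the only reasonable route: for ``if,'' every defining congruence of $\mathbb{T}(t')$ is literally one of the defining congruences of $\mathbb{T}(t)$; for ``only if,'' a failure of (1) lets you perturb a free coordinate to exit $\mathbb{T}(t')$ while staying in $\mathbb{T}(t)$, and a failure of (2) forces $\mathbb{T}(t)\cap\mathbb{T}(t')=\emptyset$ while $\mathbb{T}(t)\neq\emptyset$. So the argument is correct and is the natural one.

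One small caution on your closing paragraph. Since the defining condition of $\mathbb{T}(t)$ is stated as a congruence $\pmod k$ for \emph{every} box, irrespective of the torsion order of the loop involved, the bookkeeping you worry about is not actually needed for the proof: the containment criterion follows formally from matching residues $\pmod k$, and the only facts you genuinely use are that (a) each $\xi_j$ takes more than one residue class $\pmod k$, and (b) $\mathbb{T}(t)$ is nonempty. In particular, the assertion that a symbol with a repeated displacement class in a $k$-uniform displacement tableau must have torsion order $k$ (equivalently must lie in $[k,\,g-k+1]$) is not justified by the displacement condition alone and is not needed; you can safely delete that claim and keep the ``folded into $c_s$'' remark, which is the correct way to phrase the uniformity.
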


Under Pflueger's classification of special divisors, there is a natural interpretation of Serre duality.  Given a tableau $t$ on $[a]\times[b]$, define the \emph{transpose tableau} to be the tableau $t^T$ on $[b]\times[a]$ given by $t^T(x,y) = t(y,x)$.

\begin{lemma}\cite[Remark~3.6]{PfluegerCycles}
Let $t$ be a $k$-uniform displacement tableau on $[r+1] \times [g-d+r]$ with alphabet $[g]$, and let $D \in \mathbb{T}(t)$ be a divisor class.  Then the Serre dual $K_{\Gamma} - D$ is contained in $\mathbb{T}(t^T)$.
\end{lemma}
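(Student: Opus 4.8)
The plan is to reduce the lemma to a single identity about the coordinate functions, namely that
\[
\xi_j(K_\Gamma - D) \equiv -\,\xi_j(D) \pmod{\tau_j}
\]
for every $j \in [g]$ and every divisor class $D$ on $\Gamma$, and then to carry out the bookkeeping with tableaux. Granting the identity, the rest is immediate. The transpose $t^T$ is again a $k$-uniform displacement tableau, since transposition preserves both tableau inequalities and the displacement congruence (it only replaces $x-y$ by its negative); moreover $K_\Gamma - D$ has degree $2g-2-d$, and $[g-d+r]\times[r+1]$ is precisely the shape $[r'+1]\times[g-d'+r']$ attached to the degree $d' = 2g-2-d$ and the integer $r' = r-d+g-1$, so $\mathbb{T}(t^T)$ is a well-defined subtorus of $\Pic^{2g-2-d}(\Gamma)$. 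Finally, for a cell $(x,y)$ of $t^T$ set $j := t^T(x,y) = t(y,x)$; since $(y,x)$ is a cell of $t$, the hypothesis $D \in \mathbb{T}(t)$ gives $\xi_j(D) \equiv x-y \pmod k$, whence the identity gives $\xi_j(K_\Gamma - D) \equiv y-x \pmod k$, which is exactly the defining condition for $K_\Gamma - D \in \mathbb{T}(t^T)$.

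To prove the identity, I would first record the canonical class in these coordinates. As a divisor on the metric graph $\Gamma$ in its loops-and-bridges model, $K_\Gamma = \sum_v (\val(v)-2)[v]$; every vertex has valence $2$ or $3$, and the only valence-$2$ vertices are the left-hand vertex of the first loop and the right-hand vertex of the last loop, so
\[
K_\Gamma = \sum_{j=1}^{g-1}\langle 0\rangle_j + \sum_{j=2}^{g}\langle 1\rangle_j ,
\]
where $\langle 0\rangle_j$ and $\langle 1\rangle_j$ are the right- and left-hand vertices of the $j$th loop. Now take the break divisor representative $D \sim (d-g)\langle 0\rangle_g + \sum_j \langle\xi_j(D)\rangle_j$, subtract it from $K_\Gamma$, and return the result to break divisor form by sweeping along the chain from left to right. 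Two elementary moves suffice: on a single loop, the genus-one relation $\langle a\rangle_j + \langle b\rangle_j \sim \langle a+b\rangle_j + \langle 0\rangle_j$ (equivalently $[\langle a\rangle_j]-[\langle b\rangle_j] \sim [\langle a-b\rangle_j]-[\langle 0\rangle_j]$ in degree zero); and across a bridge, the relation $\langle 0\rangle_j \sim \langle 1\rangle_{j+1}$, obtained by firing a single chip from $\langle 0\rangle_j$ to $\langle 1\rangle_{j+1}$. On the $j$th loop with $2 \le j \le g-1$, the contribution $\langle 0\rangle_j + \langle 1\rangle_j - \langle\xi_j(D)\rangle_j$, after absorbing the one chip borrowed across the bridge from loop $j-1$, reduces to $\langle 0\rangle_j - \langle\xi_j(D)\rangle_j \sim \langle -\xi_j(D)\rangle_j - \langle 1\rangle_{j+1}$, which leaves exactly the chip $\langle -\xi_j(D)\rangle_j$ on loop $j$ and passes one borrowed chip to loop $j+1$; the (simpler) computations at loop $1$ and at loop $g$ likewise yield $\xi_1(K_\Gamma - D) \equiv -\xi_1(D)$ and $\xi_g(K_\Gamma - D) \equiv -\xi_g(D)$. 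Assembling these gives the identity.

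The one step requiring care is this left-to-right sweep: one must keep precise track of how many chips are borrowed across each bridge, check that exactly one chip remains on each of loops $1,\dots,g-1$ once it has been processed, and confirm that the chip count on loop $g$ equals $\deg(K_\Gamma-D)-g+1$. This is elementary but error-prone, and it is the only real obstacle. Everything else — the formula for $K_\Gamma$, the two chip-firing moves, and the translation into the language of tableaux — is routine. As an alternative that isolates the sweep once and for all, one can instead establish the homomorphism-up-to-shift identity $\xi_j(D+D') \equiv \xi_j(D)+\xi_j(D')+(j-1) \pmod{\tau_j}$ together with $\xi_j(K_\Gamma) \equiv j-1 \pmod{\tau_j}$; taking $D' = K_\Gamma - D$ then yields the displayed identity in a single line.
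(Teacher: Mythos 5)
Your approach is correct and the key identity is the right one: the paper itself offers no proof, instead citing Pflueger's Remark~3.6, whereas you give a direct argument. Your reduction to the coordinate identity $\xi_j(K_\Gamma - D)\equiv -\xi_j(D)$, your formula for $K_\Gamma$ in the loops-and-bridges model, and your bookkeeping with the transpose are all accurate. In fact the left-to-right sweep you flag as ``error-prone'' is cleaner than you suggest: on each interior loop $j$ the term $\langle 1\rangle_j$ from $K_\Gamma$ cancels exactly against the $-\langle 1\rangle_j$ borrowed across the bridge, so on every loop $1\le j\le g-1$ the degree-zero divisor $\langle 0\rangle_j - \langle \xi_j(D)\rangle_j$ is what remains, which is $\sim \langle -\xi_j(D)\rangle_j - \langle 0\rangle_j$; the deficit of one chip is passed right at each step, and on loop $g$ the counts close up to give $(g-2-d)\langle 0\rangle_g + \langle -\xi_g(D)\rangle_g$ as required. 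The net borrowed debt is always exactly one chip, so no more careful tally is needed. The alternative route via the additivity-with-shift formula $\xi_j(D+D')\equiv \xi_j(D)+\xi_j(D')+(j-1)$ and $\xi_j(K_\Gamma)\equiv j-1$ also works (both of these check out on the chain of loops), but it front-loads a separate lemma that the direct sweep does not require; for a self-contained proof of just this lemma the sweep is the shorter path. The one thing worth making explicit, which you elide, is the degenerate case $\tau_j=0$ (loops $j<k$ or $j>g-k+1$): there $\xi_j$ is an honest real number, the sweep gives exact equality $\xi_j(K_\Gamma-D)=-\xi_j(D)$, and the congruence mod $k$ in the definition of $\mathbb{T}(t)$ is then inherited without issue, so your argument does cover it, but a reader could stumble on the mismatch between ``$\pmod{\tau_j}$'' and ``$\pmod k$'' if you do not say a word about it.
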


\subsection{Scrollar Tableaux}

In \cite{JensenRanganthan}, Ranganathan and the second author consider a special type of $k$-uniform displacement tableaux, known as \emph{scrollar tableaux}.  Throughout this section, we fix positive integers $a$ and $b$, and a positive integer $\alpha \leq \{ a,k-1 \}$, satisfying either $\alpha \geq k-b$ or $\alpha = a$.  As in Definition~\ref{Def:MaxSplit}, we write
\[
a = q\alpha + \beta, \hspace{.2 in} 0 \leq \beta < \alpha 
\]
and
\[
b = q'(k-\alpha) + \beta', \hspace{.2 in} 0 \leq \beta' < k-\alpha .
\]

\begin{definition}
\label{Def:Scrollar}
Let $t$ be a tableau on $[a]\times[b]$.  We define $t$ to be \emph{scrollar of type} $\alpha$ if it satisfies the following three conditions.
\begin{enumerate}
    \item $t(x,y) = t(x',y')$ if and only if there exists an integer $\ell$ such that both 
    \[
    x'-x = \ell\alpha \mbox{ and } y'-y = \ell(\alpha-k).
    \]
    
    \item If $\alpha = a$, then $t(1,y) > t(a,y+a-k)$ for all $y > k-a$.
    
    \item If $\alpha = k-b$, then $t(x,1) > t(x+b-k,b)$ for all $x > k-b$.
\end{enumerate}
\end{definition}

\begin{remark}
\label{Rmk:DiffDef}
When $k-b < \alpha < a$, Definition~\ref{Def:Scrollar} agrees with \cite[Definition~7.1]{JensenRanganthan}, but in the edge cases the two definitions disagree.  This is because, when $\alpha$ is equal to $a$ or $k-b$, every standard tableau satisfies \cite[Definition~7.1]{JensenRanganthan} trivially.  In Sections~\ref{Sec:Max} and~\ref{Sec:NonMax}, however, we will see that $\mathbb{T}(t)$ is maximal only for tableaux satisfying Definition~\ref{Def:Scrollar}.  We note that when $\alpha < a$, condition (1) implies an inequality analogous to that of condition (2), because
\[
t(1,y) = t(\alpha+1,y+\alpha-k) > t(\alpha,y+\alpha-k).
\]
Similarly, when $\alpha > k-b$, condition (1) implies an inequality analogous to that of condition (3).

For the reader interested in comparing the definitions in the two papers, we provide a brief dictionary.  The integer $\alpha$ appearing here is the same as $n$ in \cite{JensenRanganthan}.  The integer $\beta$ agrees with $b$ in \cite{JensenRanganthan}, and $q$ is equal to $\lfloor \frac{a}{\alpha} \rfloor = \lfloor \frac{r+1}{n} \rfloor$.
\end{remark}

\begin{example}
A typical example of a scrollar tableau appears in Figure~\ref{Fig:Scroll}.  Note that the boxes in the first $\alpha$ columns necessarily contain distinct symbols, as do the boxes in the last $k-\alpha$ rows.  The symbols in the remaining boxes are obtained by repeatedly translating the symbols in this L-shaped region $\alpha$ boxes rightward and $k-\alpha$ boxes upward.

\begin{figure}[H]
\begin{ytableau}
*(gray)1 & *(gray)2 & *(gray)4 & 5 & 10 & 11 & 12 \\
*(gray)3 & *(gray)7 & *(gray)8 & 9 & 13 & 16 & 18 \\
*(gray)5 & *(gray)10 & *(gray)11 & 12 & 15 & 17 & 20 \\
*(gray)9 & *(gray)13 & *(gray)16 & *(gray)18 & *(gray)19 & *(gray)22 & *(gray)23 \\
*(gray)12 & *(gray)15 & *(gray)17 & *(gray)20 & *(gray)21 & *(gray)24 & *(gray)26 \\
\end{ytableau}
\caption{A scrollar tableau of type 3, where $k=5$.}
\label{Fig:Scroll}
\end{figure}
\end{example}

\begin{example}
Figure~\ref{Fig:3Tab} depicts three different $3$-uniform displacement tableaux on $[3]\times[2]$.  The first tableau $t$ is scrollar of type 2.  To see this, note that there is only one pair of boxes whose $x$ coordinates differ by a multiple of 2 and whose $y$ coordinates differ by the same multiple of $-1$, and these boxes contain the same symbol.  The second tableau $t'$ is scrollar of type 1, because it is standard, $t'(2,1) >t'(1,2)$, and $t'(3,1) > t'(2,2)$.  The final tableau $t^\ast$ is not scrollar of either type.  Specifically, it is not scrollar of type 1 because $t^\ast (2,1) < t^\ast (1,2)$, and it is not scrollar of type 2 because $t^\ast (3,1) \neq t^\ast (1,2)$.  By Lemma~\ref{lem:Containment}, we see that $\mathbb{T}(t^\ast)\subset \mathbb{T}(t)$.

\begin{figure}[H]
$t=\text{ }$\begin{ytableau}
1 & 2 & 4\\
4 & 5 & 6
\end{ytableau}\hskip2ex $t'=\text{ }$\begin{ytableau}
1 & 3 & 5\\
2 & 4 & 6
\end{ytableau}\hskip2ex $t^\ast=\text{ }$\begin{ytableau}
1 & 2 & 3\\
4 & 5 & 6
\end{ytableau}
\caption{Three different 3-uniform displacement tableaux.  The first two are scrollar of different types, and the third is not scrollar.}
\label{Fig:3Tab}
\end{figure}
\end{example}

The following observation from \cite{JensenRanganthan} is central to our argument.

\begin{proposition}
\label{Prop:ScrollarDim}
Let $t$ be a scrollar tableau of type $\alpha$ on $[r+1]\times[g-d+r]$ with alphabet $[g]$.  Then $g \geq \vert \bmu_{\alpha} \vert$ and
\[
\dim \mathbb{T}(t) = g - \vert \bmu_{\alpha} \vert .
\]
\end{proposition}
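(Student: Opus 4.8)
The plan is to reduce the whole statement to a count of the distinct symbols appearing in $t$. Recall from Section~\ref{Sec:Divisors} that the codimension of $\mathbb{T}(t)$ in $\Pic^d(\Gamma)$ equals the number of distinct symbols of $t$; since $\Pic^d(\Gamma)$ has dimension $g$, this gives $\dim \mathbb{T}(t) = g - \#\{\text{symbols of } t\}$, and because $t$ has alphabet $[g]$ the number of symbols is at most $g$. It therefore suffices to prove that a scrollar tableau of type $\alpha$ on $[r+1]\times[g-d+r]$ uses exactly $\vert\bmu_\alpha\vert$ distinct symbols; the inequality $g\geq \vert\bmu_\alpha\vert$ and the equality $\dim\mathbb{T}(t)=g-\vert\bmu_\alpha\vert$ then both follow at once.

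To count symbols, observe that condition (1) of Definition~\ref{Def:Scrollar} says exactly that two boxes of $[r+1]\times[g-d+r]$ receive the same symbol if and only if they lie in a common orbit under the translation $(x,y)\mapsto(x+\alpha,\,y-(k-\alpha))$ and its inverse. Since $\alpha\geq 1$, this translation strictly increases the first coordinate, and each box has at most one image and at most one preimage inside the grid; hence the orbit graph on the $(r+1)(g-d+r)$ boxes is a disjoint union of finite simple paths, so the number of orbits equals the number of boxes minus the number of edges. An edge is a pair $\{(x,y),(x+\alpha,y-(k-\alpha))\}$ with both boxes in the grid, and such a pair exists precisely when $1\leq x\leq r+1-\alpha$ and $k-\alpha+1\leq y\leq g-d+r$. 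Therefore
\[
\#\{\text{symbols of } t\} \;=\; (r+1)(g-d+r) - \max\{0,\,r+1-\alpha\}\cdot\max\{0,\,g-d+r-k+\alpha\}.
\]

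Finally I would identify the right-hand side with $\vert\bmu_\alpha\vert$. Under the running hypothesis on $\alpha$, whenever $g-d+r-k+\alpha<0$ the disjunct $\alpha\geq k-(g-d+r)$ fails, so $\alpha = r+1$ and hence $r+1-\alpha=0$; thus in every case the subtracted product equals $(r+1-\alpha)(g-d+r-k+\alpha)$, and expanding gives
\[
(r+1)(g-d+r)-(r+1-\alpha)(g-d+r-k+\alpha) \;=\; \alpha(g-d+\alpha-1)+(r+1-\alpha)k,
\]
which is precisely the value of $\vert\bmu_\alpha\vert$ obtained in the proof of Lemma~\ref{Lem:ExpectDim}. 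This last step is a routine computation, and conditions (2) and (3) of Definition~\ref{Def:Scrollar} play no part in it: they constrain the values taken by $t$ but not the partition of the grid into fibers, hence not the symbol count. The only point that requires a moment's care — and the reason the boundary cases $\alpha=r+1$ and $\alpha=k-(g-d+r)$ matter, these being exactly where Definition~\ref{Def:Scrollar} differs from \cite[Definition~7.1]{JensenRanganthan} — is checking that the ``boxes minus paths'' bookkeeping is still correct when the grid is too short to admit any adjacent pair at all; but the displayed formula absorbs all of these possibilities uniformly, so I do not anticipate a genuine obstacle.
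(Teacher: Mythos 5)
Your proposal is correct, and it takes a genuinely different (more self-contained) route than the paper. The paper disposes of the dimension computation by citing Proposition~7.4 of Jensen--Ranganathan directly and then applying Lemma~\ref{Lem:ExpectDim}; you instead derive the count from scratch using only the fact, stated after Definition~3.5 in Section~\ref{Sec:Divisors}, that $\operatorname{codim}\mathbb{T}(t)$ equals the number of distinct symbols of $t$. Your reduction to counting orbits of $(x,y)\mapsto(x+\alpha,\,y-(k-\alpha))$ is exactly what condition~(1) of Definition~\ref{Def:Scrollar} gives, and your ``vertices minus edges'' count of the orbits is sound because the orbit graph has max degree two and no cycles (the translation strictly increases the first coordinate). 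Letting $a=r+1$, $b=g-d+r$, one indeed checks $ab-(a-\alpha)(b-k+\alpha)=\alpha(b-a+\alpha)+(a-\alpha)k=\alpha(g-d+\alpha-1)+(r+1-\alpha)k=|\bmu_\alpha|$, matching the expression in the proof of Lemma~\ref{Lem:ExpectDim}, and your treatment of the degenerate case (when $b-k+\alpha<0$ the running hypothesis forces $\alpha=r+1$, so the subtracted term vanishes either way) correctly absorbs the boundary cases. As a small bonus, your route also yields $g\geq|\bmu_\alpha|$ for free, since the number of distinct symbols cannot exceed the size $g$ of the alphabet, whereas the paper leaves that inequality implicit. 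The trade-off is that you rederive material that the paper delegates to a citation; either way the argument is valid.
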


\begin{proof}
By \cite[Proposition~7.4]{JensenRanganthan}, we have
\[ 
\dim \mathbb{T}(t) = \rho(g,\alpha-1,d)-(r+1-\alpha)k.
\]
The result then follows from Lemma~\ref{Lem:ExpectDim}.
\end{proof}

Proposition~\ref{Prop:ScrollarDim} suggests a connection between scrollar tableaux of type $\alpha$ and the splitting type $\bmu_{\alpha}$.  This connection will be established in Proposition~\ref{Prop:ScrollSpecialize} below.  The following lemma is key to the proof of Proposition~\ref{Prop:ScrollSpecialize}.

\begin{lemma} \cite[Corollary~7.3]{JensenRanganthan}
\label{Lem:JRBounds}
Let $t$ be a scrollar tableau of type $\alpha$, and let $D \in \mathbb{T}(t)$ be a sufficiently general divisor class.  Then
\begin{enumerate}
    \item $\rk (D-qg^1_k) = \beta - 1$, and
    \item $\rk (D-(q+1)g^1_k) = -1$.
\end{enumerate}
\end{lemma}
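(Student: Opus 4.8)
The plan is to compute $\rk(D - mg^1_k)$ for every $m \geq 0$ directly from Pflueger's classification (Theorem~\ref{thm:Classification}), and to extract parts (1) and (2) from the cases $m = q$ and $m = q+1$. The argument has two steps: first, understand how subtracting $g^1_k$ interacts with the coordinates $\xi_j$ and hence with membership in the tori $\mathbb{T}(t')$; second, solve the resulting purely combinatorial extremal problem using the structure of scrollar tableaux.

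\emph{Step 1: the effect of twisting by $g^1_k$.} Since the Jacobian of a chain of loops splits as a product of circles indexed by the loops, the coordinate functions $\xi_j$ are additive with respect to the torsor structure on $\Pic(\Gamma)$. Because $g^1_k = k v_k$ is $k$ times the class of a single point, subtracting $g^1_k$ changes each coordinate $\xi_j$ by an integer multiple of $k$; in particular it is the identity modulo $k$. Consequently, for a sufficiently general $D \in \mathbb{T}(t)$ and any $k$-uniform displacement tableau $t'$, the class $D - mg^1_k$ lies in $\mathbb{T}(t')$ if and only if (i) every symbol of $t'$ is a symbol of $t$, and (ii) whenever $t(x,y) = t'(x',y')$ one has $x - y \equiv x' - y' \pmod{k}$; this is the natural analogue of Lemma~\ref{lem:Containment} for tableaux of differing shapes, and genericity of $D$ enters only in (i), since a symbol of $t'$ missing from $t$ would have a generic real coordinate, which cannot satisfy the required displacement congruence. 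Combining this with Theorem~\ref{thm:Classification} applied to $D - mg^1_k \in \Pic^{d-mk}(\Gamma)$ gives
\[
\rk(D - mg^1_k) = \max\big\{\, s \;:\; \text{some } k\text{-uniform displacement tableau } t' \text{ on } [s+1]\times[g-d+mk+s] \text{ satisfies (i) and (ii)} \,\big\},
\]
with the convention that the maximum is $-1$ when no such $t'$ exists.

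\emph{Step 2: the combinatorial extremum.} Now use that $t$ is scrollar of type $\alpha$. Condition (1) of Definition~\ref{Def:Scrollar} says the symbols of $t$ are organized into orbits under the translation $(x,y)\mapsto(x+\alpha,\,y-(k-\alpha))$, so $t$ is determined by its restriction to the ``staircase'' formed by the first $\alpha$ columns together with the last $k-\alpha$ rows, all other entries being translates. Unfolding this periodicity turns any tableau $t'$ satisfying (i) and (ii) for the parameter $m$ into an ordinary standard tableau of shape $[\,r+1-m\alpha\,]\times[\,g-d+mk+(r+1-m\alpha)-1\,]$ when $r+1-m\alpha>0$, and shows none exists when $r+1-m\alpha\leq 0$; here the shape identity uses $r+1 = q\alpha+\beta$, and the fact that this new tableau is taller by $m(k-\alpha)$ and thinner by $m\alpha$ than $t$ is exactly what the unfolding produces. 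Taking $m = q$ yields an admissible $t'$ of width $r+1-q\alpha = \beta$, so $\rk(D - qg^1_k) = \beta - 1$, while for $m = q+1$ we have $r+1-(q+1)\alpha = \beta - \alpha < 0$, so no admissible $t'$ exists and $\rk(D - (q+1)g^1_k) = -1$.

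The main obstacle is the upper bound in Step 2: one must show that \emph{every} tableau obeying (i) and (ii), not merely the evident ``columnwise'' ones obtained by truncating $t$, is captured by the unfolding, so that no wider or taller admissible tableau can occur. This is precisely where conditions (2) and (3) of Definition~\ref{Def:Scrollar} are used; they control the boundary comparisons of $t(1,y)$ with $t(a,y+a-k)$ and of $t(x,1)$ with $t(x+b-k,b)$ that would otherwise permit a wider or taller $t'$, which is exactly why Definition~\ref{Def:Scrollar} strengthens \cite[Definition~7.1]{JensenRanganthan} in the edge cases $\alpha \in \{r+1,\, k-(g-d+r)\}$. As a consistency check, a line bundle $L$ on a $k$-gonal curve with $\pi_* L \cong \cO(\bmu_{\alpha})$ satisfies $h^0(L \otimes \pi^*\cO(-q)) = \beta$ and $h^0(L \otimes \pi^*\cO(-q-1)) = 0$ by the computation \eqref{eq:ranksequence}, in agreement with (1) and (2).
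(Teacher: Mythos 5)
The paper does not prove this lemma; it is quoted verbatim from \cite[Corollary~7.3]{JensenRanganthan}, so there is no internal argument for your attempt to be compared against. Evaluating the attempt on its own terms, the high-level plan (reduce to a combinatorial claim via Theorem~\ref{thm:Classification}, then exploit the translation-periodicity of a scrollar tableau) is the natural route and consistent with how Jensen--Ranganathan actually argue, but two load-bearing steps are asserted rather than proved.

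Step 1 is a genuine gap, not a routine reduction. You claim that subtracting $g^1_k = kv_k$ shifts every coordinate $\xi_j$ by a multiple of $k$, so that membership of $D - mg^1_k$ in $\mathbb{T}(t')$ reduces to the same conditions as membership of $D$. But the $\xi_j$ are defined against the degree-dependent basepoint $(d-g)\langle 0 \rangle_g$, and passing from $\Pic^d$ to $\Pic^{d-mk}$ involves a chip-firing computation whose effect on the $\xi_j$ is precisely what must be established. Moreover, the torsion profile is zero on the first and last $k-1$ loops, so for a symbol $j$ in that range the constraint $\xi_j \equiv y-x \pmod k$ pins down a real number exactly; your argument would need $\xi_j(D - mg^1_k) = \xi_j(D)$ on the nose for those loops, which ``additive mod $k$'' does not give you. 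Nothing in your write-up justifies either assertion, and this is where a substantial part of the actual proof lives.

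Step 2 has an acknowledged hole. You identify the upper bound on $\rk(D - mg^1_k)$ as ``the main obstacle'' and then do not overcome it: the ``unfolding'' of the scrollar periodicity is described only in outline, and the claim that every admissible $t'$ of shape $[s+1]\times[g-d+mk+s]$ is captured by the unfolding is exactly the statement that needs an argument (it is where conditions (2) and (3) of Definition~\ref{Def:Scrollar} must be invoked quantitatively, not just gestured at). The concluding sanity check against the splitting type $\bmu_\alpha$ via \eqref{eq:ranksequence} is fine as a plausibility check, but it is circular as evidence: in this paper, that compatibility is a \emph{consequence} of the present lemma, via Proposition~\ref{Prop:ScrollSpecialize}, rather than an independent confirmation.
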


\begin{remark}
\label{Rmk:SuffGeneral}
In Lemma~\ref{Lem:JRBounds}, when we say that the divisor class $D \in \mathbb{T}(t)$ is ``sufficiently general'', we mean that $D$ lies in the complement of finitely many coordinate subtori of codimension at least 1 in $\mathbb{T}(t)$.  In particular, the set of divisor classes in $\mathbb{T}(t)$ satisfying the conclusion of Lemma~\ref{Lem:JRBounds} is open and dense in $\mathbb{T}(t)$.
\end{remark}

Much of \cite{JensenRanganthan} is devoted to a lifting result for divisor classes in $\mathbb{T}(t)$ when $t$ is a scrollar tableau.  Unfortunately, \cite{JensenRanganthan} does not establish this lifting result for all scrollar tableaux, but only for those that satisfy the following condition.

\begin{definition}
\label{Def:VerticalSteps}
We say that a tableau $t$ \emph{has no vertical steps} if
\[
t(x,y+1) \neq t(x,y)+1 \mbox{ for all } x,y \mbox{ such that } k \leq t(x,y) \leq g-k.
\]
\end{definition}

We note that if $g \geq \vert \bmu_{\alpha} \vert$ and $\alpha > 1$, then there exists a scrollar tableau of type $\alpha$ with no vertical steps.  For example, the transpose of the tableau defined in the proof of \cite[Lemma~3.5]{PfluegerkGonal} has no vertical steps.  An example of such a tableau appears in Figure~\ref{Fig:NoVerticalSteps}.  Although the symbol 22 appears directly above the symbol 23, it is larger than $g-k$, so this tableau satisfies the definition.

\begin{figure}[H]
\begin{ytableau}
*(gray)1 & *(gray)2 & *(gray)3 & 7 & 8 & 9 & 13 \\
*(gray)4 & *(gray)5 & *(gray)6 & 10 & 11 & 12 & 16 \\
*(gray)7 & *(gray)8 & *(gray)9 & 13 & 14 & 15 & 19 \\
*(gray)10 & *(gray)11 & *(gray)12 & *(gray)16 & *(gray)17 & *(gray)18 & *(gray)22 \\
*(gray)13 & *(gray)14 & *(gray)15 & *(gray)19 & *(gray)20 & *(gray)21 & *(gray)23 \\
\end{ytableau}
\caption{A scrollar tableau with no vertical steps.}
\label{Fig:NoVerticalSteps}
\end{figure}

The following proposition is one of the main technical results of \cite{JensenRanganthan}.  In this proposition and throughout Section~\ref{Sec:Connections}, we let $K$ be an algebraically closed, non-archimedean valued field of equicharacteristic zero.

\begin{proposition} \cite[Proposition~9.2]{JensenRanganthan}
\label{Prop:JRLifting}
Let $t$ be a scrollar tableau of type $\alpha$ with no vertical steps, and let $D \in \mathbb{T}(t)$ be a sufficiently general divisor class.  Then there exists a curve $C$ of genus $g$ and gonality $k$ over $K$ with skeleton $\Gamma$, and a divisor class $\cD \in W^r_d(C)$ specializing to $D$.
\end{proposition}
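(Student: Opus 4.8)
The plan is to construct, by hand, a $k$-gonal curve $C$ over $K$ with skeleton $\Gamma$, together with a divisor class realizing $D$, via a degeneration-and-regeneration argument in which the scrollar structure of $t$ controls the dimension of the relevant linear-series locus and the no-vertical-steps hypothesis controls the ramification data at the nodes. First I would fix the degeneration. Since $\Gamma$ is a $k$-gonal chain of loops with the torsion profile $\bm{\tau}$ fixed in Section~\ref{Sec:Divisors}, it carries the pencil $g^1_k = kv_k$ and the associated degree-$k$ map of metric graphs $\Gamma \to \Trop(\PP^1)$. I would choose a valuation ring $R$ with fraction field $K$ and a regular proper model $\cC \to \Spec R$ with generic fiber a smooth curve $C/K$, whose special fiber $\cC_0$ is a nodal curve with rational components, dual graph the chain of loops with the prescribed edge lengths (so the minimal skeleton of $C^{\an}$ is $\Gamma$), together with a finite degree-$k$ morphism $\pi\colon\cC\to\mathcal{P}$ onto a semistable model $\mathcal{P}$ of $\PP^1_K$ lifting $\Gamma \to \Trop(\PP^1)$; on $\cC_0$ this yields an admissible cover $\pi_0\colon\cC_0\to\mathcal{P}_0$ of degree $k$. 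The existence of such a model with its $k$-gonal structure is precisely the input about $k$-gonal chains of loops from \cite{PfluegerkGonal,JensenRanganthan}, and the torsion relations in $\bm{\tau}$ are exactly what force $\pi$ to exist.

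Next I would produce a limit linear series on $\cC_0$ that tropicalizes to $D$. From the break-divisor representative $(d-g)\langle 0\rangle_g + \sum_{j=1}^{g} \langle\xi_j(D)\rangle_j$ of $D$, whose coordinates satisfy $\xi_{t(x,y)}(D) = y-x \pmod k$, one reads off a multidegree-$d$ line bundle $\cL_0$ on $\cC_0$ supported at chosen smooth points of its components, and then equips it, component by component, with sub-linear-systems whose vanishing sequences at the nodes are dictated by $t$. Here Theorem~\ref{thm:Classification} and Lemma~\ref{lem:Containment} give that the tropical rank of $D$ is exactly $r$; the scrollar hypothesis together with Lemma~\ref{Lem:JRBounds} shows that the filtration $\rk_\Gamma(D-mg^1_k)$ behaves as it does for the splitting type $\bmu_{\alpha}$, which is what makes the prescribed vanishing data consistent with an honest limit $g^r_d$ and, on the generic fiber, forces $\pi_*\cD \cong \cO(\bmu_{\alpha})$ so that $h^0(\cD)=r+1$ by $(\star)$; and the no-vertical-steps hypothesis ensures that the vanishing sequence at each node is as generic as possible among those compatible with $t$, so the limit linear series is refined. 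The hypothesis that $D$ is sufficiently general, in the sense of Remark~\ref{Rmk:SuffGeneral}, is what keeps the support away from the finitely many problematic coordinate subtori.

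Finally I would regenerate. Invoking a smoothing theorem for limit linear series on the degeneration $\cC\to\Spec R$ — a refined limit $g^r_d$ on $\cC_0$ lifts to a $g^r_d$ on the generic fiber provided the corresponding locus has the expected dimension — one obtains a line bundle $\cD$ on $C$ with $h^0(C,\cD)\geq r+1$, hence $\cD\in W^r_d(C)$; the expected-dimension input is exactly Proposition~\ref{Prop:ScrollarDim}, since $\dim\mathbb{T}(t) = g-\vert\bmu_{\alpha}\vert = \rho(g,\alpha-1,d)-(r+1-\alpha)k$. Because the construction tracks the retraction $C^{\an}\to\Gamma$, the tropicalization of $\cD$ is the class of $D$, so $\cD$ specializes to $D$. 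The gonality of $C$ equals $k$: the map $\pi$ gives a degree-$k$ pencil, so it is at most $k$, while by specialization any pencil of smaller degree on $C$ would yield one on $\Gamma$, contradicting the fact that the chain of loops with torsion profile $\bm{\tau}$ has gonality exactly $k$.

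The hard part will be the regeneration step: one must show that the limit linear series assembled from purely tropical data actually deforms, i.e. that the relevant deformation functor is unobstructed of the expected dimension at $(\cL_0,\{V_i\})$, simultaneously at all $2g$ nodes of $\cC_0$; here the pair of hypotheses is essential, scrollar pinning down the dimension via Proposition~\ref{Prop:ScrollarDim} and no-vertical-steps killing the obstructions coming from the node data. An alternative that trades this difficulty for another is to work on the pushforward side: $\pi_*\cD$ must have splitting type $\cO(\bmu_{\alpha})$, and vector bundles on $\PP^1_K$ with a fixed splitting type are unobstructed, so one can construct such a bundle, recover a candidate $\cD$ from it, and then verify — the genuinely delicate point in that route — that this $\cD$ tropicalizes to $D$, using the dictionary between scrollar tableaux of type $\alpha$ and the splitting type $\bmu_{\alpha}$ recorded in Lemma~\ref{Lem:JRBounds}.
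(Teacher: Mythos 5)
The paper does not prove this proposition; it cites it directly from \cite[Proposition~9.2]{JensenRanganthan} and takes it as an input to the lifting argument in Section~\ref{Sec:Connections}. So there is no ``paper's own proof'' here to compare your sketch against --- the expectation for a reader of this paper is to consult \cite{JensenRanganthan} for the argument.

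That said, a few observations on your sketch as a reconstruction. Your outline --- choose a regular semistable model of a $k$-gonal curve with dual graph the chain of loops, equip the special fiber with a limit linear series whose vanishing data at the nodes is read off from $t$, and then smooth --- is broadly the right shape, and you correctly identify that the scrollar hypothesis feeds into the dimension count (Proposition~\ref{Prop:ScrollarDim}) while the no-vertical-steps hypothesis is a genericity/nondegeneracy condition at the nodes. However, the entire mathematical content of the cited proposition lives in the ``hard part'' you flag and then defer: showing that the assembled limit object actually deforms. In \cite{JensenRanganthan} this is not handled by invoking an off-the-shelf Eisenbud--Harris regeneration theorem for refined limit $g^r_d$'s; the argument is carried out on the non-archimedean analytic side, tracking the $\pi_*$-structure and the induced rank filtration $\rk(D - m g^1_k)$ (exactly the data in Lemma~\ref{Lem:JRBounds}) and using the no-vertical-steps condition to guarantee that the relevant lifting problem at each loop is unobstructed. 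Your alternative sketch --- build $\pi_*\cD$ as a bundle of splitting type $\cO(\bmu_\alpha)$ and recover $\cD$ --- gestures closer to the actual mechanism, but the ``genuinely delicate point'' you defer there (matching the tropicalization of the recovered $\cD$ to the prescribed $D \in \mathbb{T}(t)$) is precisely the crux and is left unaddressed.

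So: not wrong in outline, but the sketch stops exactly where the proof begins, and the regeneration step you describe as the bottleneck is where all of \cite[Proposition~9.2]{JensenRanganthan}'s work is concentrated. If you want to fill this in, the right move is to consult that paper rather than reach for classical limit linear series smoothing theorems, which do not directly apply in the generality needed here.
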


\section{Connections Between Combinatorics and Algebraic Geometry}
\label{Sec:Connections}

In this section, we demonstrate the connection between our combinatorial and geometric results.  Specifically, we show that Theorem~\ref{thm:MaxScrollar} implies Theorem~\ref{thm:Strata}.  To begin, we establish the connection between scrollar tableaux of type $\alpha$ and the splitting types $\bmu_{\alpha}$.

\begin{proposition}
\label{Prop:ScrollSpecialize}
Let $C$ be a curve of genus $g$ and gonality $k$ over $K$ with skeleton $\Gamma$.  Let $t$ be a scrollar tableau of type $\alpha$, let $D \in \mathbb{T}(t)$ be a sufficiently general divisor class, and let $\cD \in W^r_d (C)$ be a divisor that specializes to $D$.  Then $\cD \in W^{\bmu_{\alpha}}(C)$.
\end{proposition}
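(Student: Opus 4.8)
The plan is to identify the splitting type $\bmu$ of $\pi_\ast\cO_C(\cD)$ directly, where $\pi\colon C\to\PP^1$ is the degree‑$k$ map, so that $g^1_k=\pi^\ast\cO_{\PP^1}(1)$. I would prove $\bmu=\bmu_\alpha$ by establishing two facts and invoking Proposition~\ref{Prop:Incomparable}: (i) $\bmu$ is a \emph{candidate} splitting type, meaning $\sum_i\mu_i=d+1-g-k$ and $\sum_i\max\{0,\mu_i+1\}\ge r+1$; and (ii) $\bmu_\alpha\le\bmu$ in the dominance order. Because $t$ is scrollar of type $\alpha$ on $[r+1]\times[g-d+r]$, the integer $\alpha$ satisfies $\alpha\le\min\{r+1,k-1\}$ together with either $\alpha\ge k-(g-d+r)$ or $\alpha=r+1$, so Proposition~\ref{Prop:Incomparable} makes $\bmu_\alpha$ maximal; then (i) and (ii) force $\bmu=\bmu_\alpha$, i.e.\ $\cD\in W^{\bmu_\alpha}(C)$.

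Both (i) and the input for (ii) come from comparing ranks on $C$ and on $\Gamma$. By the Projection Formula $(\star)$, $h^0(C,\cD+mg^1_k)=\sum_i\max\{0,\mu_i+m+1\}$ for every $m\in\ZZ$, with $\sum_i\mu_i=d+1-g-k$; write $f_\alpha(m):=h^0(\cO(\bmu_\alpha)\otimes\cO(m))=\sum_i\max\{0,\mu_{\alpha,i}+m+1\}$. Since $\Gamma$ is the skeleton of $C$, $D=\trop(\cD)$, and $g^1_k$ specializes to $g^1_k=kv_k$, Baker's Specialization Lemma gives $\rk_C(\cD+mg^1_k)\le\rk_\Gamma(D+mg^1_k)$, hence
\[
h^0(C,\cD+mg^1_k)\ \le\ \rk_\Gamma(D+mg^1_k)+1\qquad\text{for all }m\in\ZZ .
\]
Taking $m=0$, together with $\rk_\Gamma(D)=r$ for sufficiently general $D\in\mathbb{T}(t)$, forces $\rk_C(\cD)=r$, so $h^0(C,\cD)=r+1$ and (i) holds. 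The key claim is then that
\[
\rk_\Gamma(D+mg^1_k)+1\ \le\ f_\alpha(m)\qquad\text{for all }m\in\ZZ\text{ and sufficiently general }D\in\mathbb{T}(t);
\]
combined with the displayed inequality this yields $h^0(\cO(\bmu)\otimes\cO(m))\le f_\alpha(m)$ for all $m$, and since $\bmu$ and $\bmu_\alpha$ have equal sum, the elementary dictionary between splitting types and their twisted $h^0$‑functions (immediate from Definition~\ref{Def:Order} and $(\star)$) gives $\bmu_\alpha\le\bmu$, which is (ii).

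It remains to prove the key claim, which I would do by cases in $m$. One computes $f_\alpha(-q)=\beta$ and $f_\alpha(-q-1)=0$, so $m=-q$ and $m=-q-1$ are precisely Lemma~\ref{Lem:JRBounds}(1)--(2); for $m\le -q-1$ one has $\rk_\Gamma(D+mg^1_k)=-1=f_\alpha(m)-1$ by Lemma~\ref{Lem:JRBounds}(2) and monotonicity of rank under addition of effective divisors; for $m\ge q'+1$ one uses that $K_\Gamma-D\in\mathbb{T}(t^T)$ with $t^T$ scrollar of type $k-\alpha$, applies Lemma~\ref{Lem:JRBounds}(2) to $t^T$, and concludes via Baker--Norine Riemann--Roch that $\rk_\Gamma(D+mg^1_k)=d+mk-g=f_\alpha(m)-1$. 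The remaining range $-q<m<0$ (and, dually through $t^T$, $0<m<q'+1$) is the main obstacle: here $f_\alpha(m)=r+1+\alpha m$, and one wants $\rk_\Gamma(D+mg^1_k)\le r+\alpha m$, which I would deduce from the convexity of the function $m\mapsto\rk_\Gamma(D+mg^1_k)$, forcing it to lie on or below the chord through its endpoint values $\beta-1=r-\alpha q$ at $m=-q$ and $r$ at $m=0$. Establishing this convexity — equivalently, carrying out a direct combinatorial determination of all the ranks $\rk_\Gamma(D+mg^1_k)$ for $D$ general in a scrollar torus, extending Lemma~\ref{Lem:JRBounds} — is where the special structure of scrollar tableaux and of the gonality class $kv_k$ on $\Gamma$ must be used, and is the step I expect to be hardest.
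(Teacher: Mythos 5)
Your overall strategy matches the paper's — identify the splitting type $\bmu$ of $\pi_*\cO(\cD)$, use Lemma~\ref{Lem:JRBounds}, Baker's Specialization, and the transpose tableau $t^T$ (Serre duality) to bound $\bmu$ from above and below, and then conclude $\bmu = \bmu_\alpha$ from the maximality of $\bmu_\alpha$ (Proposition~\ref{Prop:Incomparable}). But you then over-engineer the middle step, and the place where you over-engineer is where your proof has a genuine gap.

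You set out to prove that $\rk_\Gamma(D + mg^1_k) + 1 \leq f_\alpha(m)$ for \emph{every} integer $m$, in order to invoke a "dictionary" translating the all-$m$ $h^0$-inequality into the dominance inequality $\bmu_\alpha \leq \bmu$. The cases $m \leq -q-1$, $m \in \{-q,0\}$, and $m \geq q'+1$ you handle correctly from Lemma~\ref{Lem:JRBounds} (applied to $t$ and to $t^T$) plus Riemann--Roch. But for $-q < m < 0$ (and dually $0 < m < q'+1$) you appeal to convexity of $m \mapsto \rk_\Gamma(D + mg^1_k)$, which you have not proven and which is not a known general fact about tropical rank; you yourself flag it as the hardest step. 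As written, the argument does not close.

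The gap is avoidable, and the paper's proof shows how: you do not need the $h^0$-inequality at intermediate $m$ at all. From $h^0(\cD - (q+1)g^1_k) = 0$ the formula $(\star)$ gives the pointwise bound $\mu_k \leq q$, and from $h^0(\cD - qg^1_k) \leq \beta$ it gives $\mu_{k-\beta} \leq q-1$; dually, from the two constraints coming from $t^T$ and $K_C - \cD$ one gets $\mu_1 \geq -q'-2$ and $\mu_{\beta'+1} \geq -q'-1$. Since $\bmu$ is ordered and $\sum_i \mu_i = \sum_i \mu_{\alpha,i}$, these four pointwise bounds immediately give $\mu_i \geq \mu_{\alpha,i}$ for $i \leq k-\alpha$ and $\mu_i \leq \mu_{\alpha,i}$ for $i > k-\alpha$, which is exactly $\bmu \geq \bmu_\alpha$ — no information about intermediate twists, and no convexity, is required. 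In short: you already have everything you need from the four extreme twists; routing the conclusion through the full function $f_\alpha$ creates, rather than solves, a problem.
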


\begin{proof}
Let $\bmu$ denote the splitting type of $\pi_* \cO (\cD)$.  By Lemma~\ref{Lem:JRBounds}, we have 
\begin{align*}
   \rk(D-qg^1_k) &= \beta-1, \\
   \rk(D-(q+1)g^1_k) &= -1.
\end{align*}

By Baker's Specialization Lemma \cite{Baker}, it follows that
\begin{align}
\label{eq:q} h^0 (\cD-qg^1_k) &\leq \beta, \\
\label{eq:q+1} h^0 (\cD-(q+1)g^1_k) &= 0.
\end{align}

Recall that, if $t^T$ denotes the transpose of $t$, then the Serre dual $K_{\Gamma} - D $ is contained in $\mathbb{T}(t^T)$.  Note that $t^T$ is also a scrollar tableau.  By Lemma~\ref{Lem:JRBounds}, therefore, since $K_{\Gamma} - D$ is sufficiently general, we see that
\begin{align*}
  \rk(K_{\Gamma}-D-q'g^1_k) &= \beta'-1, \\
  \rk(K_{\Gamma}-D-(q'+1)g^1_k) &= -1.
\end{align*}

By Baker's Specialization Lemma, it follows that
\begin{align}
\label{eq:q'} h^0 (K_C -\cD-q'g^1_k) &\leq \beta', \\
\label{eq:q'+1} h^0 (K_C -\cD-(q'+1)g^1_k) &= 0.
\end{align}

By (\ref{eq:ranksequence}), (\ref{eq:q+1}) implies that $\mu_k \leq q$ and (\ref{eq:q}) implies that $\mu_{k-\beta} \leq q-1$.  It follows that
\[
\mu_{k-\alpha+1} + \cdots + \mu_{k-\alpha+ \ell} \leq \mu_{\alpha,k-\alpha+1} + \cdots + \mu_{\alpha,k-\alpha+ \ell} \hspace{.2 in} \mbox{ for all } \ell \leq \alpha.
\]

Similarly, (\ref{eq:q'+1}) implies that $\mu_1 \geq -q'-2$, and (\ref{eq:q'}) implies that $\mu_{\beta'+1} \geq -q'-1$.  It follows that
\[
\mu_1 + \cdots + \mu_{\ell} \geq \mu_{\alpha,1} + \cdots + \mu_{\alpha,\ell} \hspace{.2 in} \mbox{ for all } \ell \leq k-\alpha.
\]
Putting these together, we see that $\bmu \geq \bmu_{\alpha}$.  By Proposition~\ref{Prop:Incomparable}, however, $\bmu_{\alpha}$ is maximal, hence $\bmu = \bmu_{\alpha}$.
\end{proof}

\begin{corollary}
\label{Cor:Lifting}
Let $t$ be a scrollar tableau of type $\alpha$ with no vertical steps, and let $D \in \mathbb{T}(t)$ be a sufficiently general divisor class.  Then there exists a curve $C$ of genus $g$ and gonality $k$ over $K$ with skeleton $\Gamma$, and a divisor class $\cD \in W^{\mu_{\alpha}}(C)$ specializing to $D$.
\end{corollary}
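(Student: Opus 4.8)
The plan is to combine the two results imported from \cite{JensenRanganthan} earlier in this section, namely Proposition~\ref{Prop:JRLifting} and Proposition~\ref{Prop:ScrollSpecialize}, together with the observation that ``sufficiently general'' cuts out a dense open subset of $\mathbb{T}(t)$. First, since $t$ is a scrollar tableau of type $\alpha$ with no vertical steps and $D \in \mathbb{T}(t)$ is sufficiently general, Proposition~\ref{Prop:JRLifting} produces a curve $C$ of genus $g$ and gonality $k$ over $K$ with skeleton $\Gamma$, together with a divisor class $\cD \in W^r_d(C)$ specializing to $D$. Then I would feed this $\cD$ into Proposition~\ref{Prop:ScrollSpecialize} to upgrade the conclusion $\cD \in W^r_d(C)$ to $\cD \in W^{\bmu_{\alpha}}(C)$, which is exactly the assertion.

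The only point requiring care is that ``sufficiently general'' has an \emph{a priori} different meaning in each of the two propositions, and that the proof of Proposition~\ref{Prop:ScrollSpecialize} invokes Lemma~\ref{Lem:JRBounds} not only for $D$ but also for its Serre dual $K_{\Gamma}-D$. By Remark~\ref{Rmk:SuffGeneral}, each of these conditions amounts to lying in the complement of finitely many proper coordinate subtori of $\mathbb{T}(t)$, respectively of $\mathbb{T}(t^T)$. Since the map $D \mapsto K_{\Gamma}-D$ is an isomorphism of $\Pic(\Gamma)$ carrying $\mathbb{T}(t)$ isomorphically onto $\mathbb{T}(t^T)$, the pullback along it of a dense open subset of $\mathbb{T}(t^T)$ is again dense open in $\mathbb{T}(t)$; note also that $t^T$ is again scrollar, so Lemma~\ref{Lem:JRBounds} does apply to it. Intersecting the finitely many dense open conditions coming from Proposition~\ref{Prop:JRLifting}, from Lemma~\ref{Lem:JRBounds} applied to $D$, and from Lemma~\ref{Lem:JRBounds} applied to $K_{\Gamma}-D$, and using that $\mathbb{T}(t)$ is irreducible, we conclude that a sufficiently general $D \in \mathbb{T}(t)$ satisfies all the hypotheses simultaneously; for such $D$ the lift $\cD$ built in the first step lies in $W^{\bmu_{\alpha}}(C)$.

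There is essentially no obstacle here, since the corollary is a formal consequence of the two cited propositions; the ``hard part'' is purely bookkeeping, namely tracking the several meanings of ``sufficiently general'' and packaging them as a single dense open condition. It is worth flagging that the no-vertical-steps hypothesis is needed only for the application of Proposition~\ref{Prop:JRLifting} that produces $C$ and $\cD$, and plays no role in controlling the splitting type of $\cD$; that control comes entirely from Proposition~\ref{Prop:ScrollSpecialize}, which makes no such assumption.
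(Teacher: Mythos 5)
Your argument is exactly the paper's: apply Proposition~\ref{Prop:JRLifting} to produce $C$ and $\cD \in W^r_d(C)$ specializing to $D$, then apply Proposition~\ref{Prop:ScrollSpecialize} to conclude $\cD \in W^{\bmu_\alpha}(C)$. The extra bookkeeping you supply about reconciling the various meanings of ``sufficiently general'' (via Remark~\ref{Rmk:SuffGeneral} and the Serre-duality isomorphism $\mathbb{T}(t)\to\mathbb{T}(t^T)$) is correct and makes explicit a point the paper leaves implicit, but it does not change the route.
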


\begin{proof}
By Proposition~\ref{Prop:JRLifting}, there exists a curve $C$ of genus $g$ and gonality $k$ over $K$ with skeleton $\Gamma$, and a divisor class $\cD \in W^r_d(C)$ specializing to $D$.  By Proposition~\ref{Prop:ScrollSpecialize}, the divisor class $\cD$ is in $W^{\mu_{\alpha}}(C)$.
\end{proof}

We now show that Theorem~\ref{thm:MaxScrollar} implies Theorem~\ref{thm:Strata}.  We do this in two steps.  First, we obtain an upper bound on a particular component of $W^r_d (C)$.

\begin{proposition}
\label{prop:DimensionUpperBound}
Let $C$ and $\cD$ be as in Corollary~\ref{Cor:Lifting}, and let $\mathcal{Y}$ be any irreducible component of $W^r_d(C)$ containing $\cD$.  Then
\[
\dim \mathcal{Y} \leq g - \vert \bmu_{\alpha} \vert .
\]
\end{proposition}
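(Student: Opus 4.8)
The plan is to tropicalize $\mathcal{Y}$ and exploit the fact that, near the point $D$, the tropical Brill--Noether locus $W^r_d(\Gamma)$ coincides with a single torus $\mathbb{T}(t)$, which by Theorem~\ref{thm:MaxScrollar} is maximal. Write $t$ for the scrollar tableau of type $\alpha$ with $D \in \mathbb{T}(t)$, as furnished by Corollary~\ref{Cor:Lifting}. By Theorem~\ref{thm:MaxScrollar}, $\mathbb{T}(t)$ is maximal with respect to containment among the tori indexed by $k$-uniform displacement tableaux on $[r+1]\times[g-d+r]$ with alphabet $[g]$.

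The first step is a local description of $W^r_d(\Gamma)$ at $D$. After imposing finitely many further genericity conditions on $D$ — which is harmless, since ``sufficiently general'' already permits deleting finitely many coordinate subtori of positive codimension — I would argue: for any $k$-uniform displacement tableau $t'$ on $[r+1]\times[g-d+r]$, if $D \in \mathbb{T}(t')$, then $\mathbb{T}(t)\cap\mathbb{T}(t')$ is a coordinate subtorus containing the general point $D$ of $\mathbb{T}(t)$, hence equals $\mathbb{T}(t)$; thus $\mathbb{T}(t)\subseteq\mathbb{T}(t')$, and maximality of $\mathbb{T}(t)$ forces $\mathbb{T}(t')=\mathbb{T}(t)$. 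Since $W^r_d(\Gamma)$ is a \emph{finite} union of closed coordinate subtori (Theorem~\ref{thm:Classification}), deleting the finitely many such subtori that do not contain $D$ produces an open neighborhood $U$ of $D$ in $\Pic^d(\Gamma)$ with $W^r_d(\Gamma)\cap U = \mathbb{T}(t)\cap U$. By Proposition~\ref{Prop:ScrollarDim}, $W^r_d(\Gamma)$ therefore has dimension $\dim\mathbb{T}(t) = g - \vert\bmu_\alpha\vert$ near $D$.

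The second step combines this with the tropicalization of $\mathcal{Y}$. By Baker's Specialization Lemma \cite{Baker}, $\Trop(\mathcal{Y})\subseteq W^r_d(\Gamma)$, and since $\cD\in\mathcal{Y}$ specializes to $D$, we have $D\in\Trop(\mathcal{Y})$. By the standard structure theory of tropicalizations (the Bieri--Groves theorem, applied to the subvariety $\mathcal{Y}$ of $\Pic^d(C)$), $\Trop(\mathcal{Y})$ is a polyhedral complex of pure dimension $\dim\mathcal{Y}$; in particular $D$ lies in the closure of a face of dimension $\dim\mathcal{Y}$, so $\dim\big(\Trop(\mathcal{Y})\cap U\big) = \dim\mathcal{Y}$ for the neighborhood $U$ found above. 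Combining the two steps,
\[
\dim\mathcal{Y} = \dim\big(\Trop(\mathcal{Y})\cap U\big) \leq \dim\big(\mathbb{T}(t)\cap U\big) = g - \vert\bmu_\alpha\vert,
\]
which is the claim.

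The main obstacle is the non-archimedean input in the last step: one needs not merely that $\Trop(\mathcal{Y})$ lies in $W^r_d(\Gamma)$ (Baker's lemma) but that tropicalization does not collapse dimension locally at $D$, i.e.\ that $\Trop(\mathcal{Y})$ is a pure-dimensional polyhedral complex of dimension $\dim\mathcal{Y}$, so that the local picture of $W^r_d(\Gamma)$ around $D$ genuinely controls $\dim\mathcal{Y}$. A secondary point is checking that the genericity inherited from Corollary~\ref{Cor:Lifting} is, or can be strengthened to be, strong enough that $D$ avoids every coordinate subtorus $\mathbb{T}(t)\cap\mathbb{T}(t')$ proper in $\mathbb{T}(t)$; this is routine because there are only finitely many such subtori.
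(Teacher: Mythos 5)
Your proof is correct and follows essentially the same route as the paper: tropicalize $\mathcal{Y}$, apply Baker's Specialization Lemma and the dimension-preservation/pure-dimensionality of tropicalization (the paper cites Gubler's Theorem 6.9 where you invoke Bieri--Groves), and then use maximality of $\mathbb{T}(t)$ together with the genericity of $D$ to identify the local dimension of $W^r_d(\Gamma)$ at $D$ with $\dim\mathbb{T}(t) = g - \vert\bmu_\alpha\vert$. Your more explicit unpacking of why $W^r_d(\Gamma)$ agrees with $\mathbb{T}(t)$ near a sufficiently general $D$ is a legitimate spelling-out of the step the paper compresses into one sentence, not a different argument.
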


\begin{proof}
By \cite[Theorem~6.9]{Gubler07},
\[
\dim \mathcal{Y} = \dim \Trop \mathcal{Y} .
\]
By Baker's Specialization Lemma, we see that $\Trop \mathcal{Y} \subseteq W^r_d (\Gamma)$.  It follows that $\dim \mathcal{Y}$ cannot exceed the local dimension of $W^r_d (\Gamma)$ in a neighborhood of $D$.  By Theorem~\ref{thm:MaxScrollar}, $\mathbb{T}(t)$ is maximal with respect to containment in $W^r_d (\Gamma)$, and since $D \in \mathbb{T}(t)$ is sufficiently general, the local dimension of $W^r_d(\Gamma)$ in a neighborhood of $D$ is equal to that of $\mathbb{T}(t)$.  Finally, by Proposition~\ref{Prop:ScrollarDim}, we have
\[
\dim \mathcal{Y} \leq \dim \mathbb{T}(t) = g - \vert \bmu_{\alpha} \vert .
\]
\end{proof}

\begin{proof}[Proof that Theorem~\ref{thm:MaxScrollar} implies Theorem~\ref{thm:Strata}]
The case $k=2$ is classical, so we assume that $k \geq 3$.  Let $\alpha \leq \min \{ r+1, k-1 \}$ be a positive integer satisfying either $\alpha \geq k-(g-d+r)$ or $\alpha = r+1$.  If $\alpha \geq k-(g-d+r)$, then applying Serre duality exchanges $\alpha$ with $k-\alpha$, so we may assume that $\alpha > 1$.

Since $\vert \bmu \vert \leq g$ and $\alpha > 1$, there exists a scrollar tableau $t$ of type $\alpha$ with no vertical steps.  Let $D \in \mathbb{T}(t)$ be a sufficiently general divisor class.  By Corollary~\ref{Cor:Lifting}, there exists a curve $C$ of genus $g$ and gonality $k$ over $K$ with skeleton $\Gamma$, and a divisor class $\cD \in W^{\mu_{\alpha}}(C)$ specializing to $D$.  If $\mathcal{Y}$ is an irreducible component of $W^{\mu_{\alpha}}(C)$ containing $\cD$, then by Proposition~\ref{prop:DimensionUpperBound}, we have
\[
\dim \mathcal{Y} \leq g - \vert \bmu_{\alpha} \vert .
\]
It therefore suffices to prove the reverse inequality.

The rest of the proof is identical to that of \cite[Theorem~9.3]{JensenRanganthan}, which we reproduce here for the sake of completeness.  Let $\mathcal{M}_g^k$ be the moduli space of curves of genus $g$ that admit a degree $k$ map to $\PP^1$, let $\mathcal{C}_k$ be the universal curve, and let $\mathcal{W}^{\bmu_{\alpha}}$ be the universal splitting-type locus over $\mathcal{M}_g^k$.  Let $\widetilde{\mathcal{W}}^{\bmu_{\alpha}}$ be the locus in the symmetric $d$th fiber power of $\mathcal{C}_k$ parameterizing divisors $\cD$ such that $\pi_* \cO(\cD)$ has splitting type $\bmu_{\alpha}$.

We work in the Berkovich analytic domain of $k$-gonal curves whose skeleton is a $k$-gonal chain of loops.  By Corollary~\ref{Cor:Lifting}, the tropicalization of $\widetilde{\mathcal{W}}^{\bmu_{\alpha},\an}$ has dimension at least
\[
3g-5+2k-\vert \bmu_{\alpha} \vert +r .
\]
If $\pi_* \cO(\cD) \cong \cO (\bmu_{\alpha})$, then $\cD$ has rank exactly $r$.  It follows that $\mathcal{W}^{\bmu_{\alpha}}$ has dimension at least
\[
3g-5+2k-\vert \bmu_{\alpha} \vert .
\]
By Corollary~\ref{Cor:Lifting}, there is an irreducible component of $\mathcal{W}^{\bmu_{\alpha}}$ whose tropicalization contains pairs of the form $(\Gamma, \cD)$ where $\Gamma$ is a $k$-gonal chain of loops and $\cD \in \mathbb{T}(t)$ is sufficiently general.  The image of this component in $\mathcal{M}_g^{k,\trop}$ has dimension $2g-5+2k$.  It follows that this component dominates $\mathcal{M}_g^k$, and the fibers have dimension at least $g-\vert \bmu_{\alpha} \vert$.

Combining the two bounds, we see that there exists an irreducible component $\mathcal{Y}$ of $W^{\bmu_{\alpha}}(C)$, containing $\cD$, of dimension $g- \vert \bmu_{\alpha} \vert$.  If $\mathcal{Z}$ is a component of $W^r_d (C)$ containing $\mathcal{Y}$, then by Proposition~\ref{prop:DimensionUpperBound}, we see that
\[
\dim \mathcal{Z} = \dim \mathcal{Y}.
\]
It follows that $\mathcal{Z}$ is the closure of $\mathcal{Y}$.
\end{proof}

\section{Maximality of Scrollar Tableaux}
\label{Sec:Max}

Having established that Theorem~\ref{thm:Strata} follows from our combinatorial results, it remains to prove the combinatorial results.  The goal of this section is to prove the following.

\begin{theorem}
\label{thm:Maximal} 
Let $t$ be a scrollar tableau of type $\alpha$ on $[a]\times [b]$.  Then $\mathbb{T}(t)$ is maximal with respect to containment.
\end{theorem}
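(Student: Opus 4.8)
Throughout, write $S=\mathrm{Im}(t)\subseteq[g]$ for the set of symbols of $t$. The plan is to attack the statement through the combinatorial criterion of Lemma~\ref{lem:Containment}: $\mathbb{T}(t)$ fails to be maximal precisely when there is a $k$-uniform displacement tableau $t'$ on $[a]\times[b]$ with $\mathrm{Im}(t')\subseteq S$, compatible with $t$ in the sense that $t(x,y)=t'(x',y')$ forces $x-y\equiv x'-y'\pmod k$, and using strictly fewer symbols than $t$ (recall that the codimension of a coordinate subtorus $\mathbb{T}(u)$ equals the number of distinct symbols in $u$, so such a containment is automatically proper). So I assume such a $t'$ exists and produce a contradiction. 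For $s\in S$, let $\gamma(s)\in\mathbb{Z}/k$ be the common residue of $x-y$ over all cells of $t$ with value $s$; by compatibility this is also the only residue class in which $t'$ may place $s$. Since in any $k$-uniform displacement tableau each symbol occupies cells in a single residue class, the symbol sets of $t$ and of $t'$ are partitioned by $\gamma$, and it suffices to show that for every residue class $c$ the tableau $t'$ uses \emph{every} class-$c$ symbol of $t$; summing over $c$ then gives $\mathrm{Im}(t')=S$, contradicting that $t'$ has fewer symbols.

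Fix $c$ and let $R_c=\{(x,y)\in[a]\times[b]:x-y\equiv c\pmod k\}$, viewed as a subposet of the coordinatewise order. Restricting $t'$ to $R_c$, each value occupies an antichain, so by Mirsky's theorem $t'$ uses at least $\ell_c$ symbols on $R_c$, where $\ell_c$ is the length of a longest chain in $R_c$, and these symbols are all of class $c$; since they lie among the class-$c$ symbols of $t$, it is enough to prove that $t$ has exactly $\ell_c$ symbols of class $c$. In the interior regime $k-b<\alpha<a$ this is a direct computation from Definition~\ref{Def:Scrollar}(1): the class-$c$ symbols of $t$ correspond to the lines in direction $(\alpha,\alpha-k)$ meeting $R_c$, each such line is an antichain of $R_c$ (as $0<\alpha<k$), and one writes down a single chain of $R_c$ hitting each of these lines exactly once. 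Hence $\ell_c=\#\{s\in S:\gamma(s)=c\}$, and the interior case is complete.

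The edge regimes $\alpha=a$ and $\alpha=k-b$ are interchanged by transposing the tableau, which sends a scrollar tableau of type $\alpha$ to one of type $k-\alpha$, swaps conditions (2) and (3), and corresponds to Serre duality $D\mapsto K_\Gamma-D$, under which maximality is preserved; so assume $\alpha=a$. Here Definition~\ref{Def:Scrollar}(1) forces $t$ to be \emph{standard}, and once $a+b>k$ the poset $R_c$ is genuinely not a chain, so $\ell_c$ undercounts the class-$c$ symbols of $t$; the deficit is exactly what condition (2), $t(1,y)>t(a,y+a-k)$ for $y>k-a$, must compensate for. The plan in this case is to argue by induction over the cells of $[a]\times[b]$ in a linear extension of the poset, showing that $t'(x,y)$ is forced cell by cell: conditions (1) and (2) pin down the relative order in which the available symbols of a given class must appear, and this, together with the tableau inequalities for $t'$ and the compatibility constraint on classes, determines each entry of $t'$ and in particular forces $t'$ to use every symbol of $t$. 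The wrap-around inequality enters precisely when a class-$c$ cell lies in a different column-band from the chain realizing $\ell_c$: condition (2) says its $t$-value exceeds the $t$-values of the cells just below it in the preceding band, and the corresponding inequality among the $\lvert\bmu_\alpha\rvert$ symbols of $t$ forces $t'$ to keep those values distinct.

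The main obstacle is exactly this edge-case argument: away from the edges the statement is a transparent Dilworth–Mirsky count, but when $t$ is standard one must extract from conditions (2)/(3) the correct chain of forced strict inequalities among the limited supply of symbols and carefully track how the column-bands (respectively row-bands) interact with the residue classes $R_c$. Once each residue class is shown to retain all symbols of $t$, we get $\mathrm{Im}(t')=\mathrm{Im}(t)$, hence $\mathbb{T}(t')=\mathbb{T}(t)$; thus no $\mathbb{T}(t')$ properly contains $\mathbb{T}(t)$, and $\mathbb{T}(t)$ is maximal.
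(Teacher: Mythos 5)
Your reduction of maximality to counting symbols within residue classes is a genuinely different angle from the paper. The paper does not count at all: it proves $t'=t$ outright by a box-by-box induction, showing $t'(n,m)\geq t(n,m)$ via Lemma~\ref{lem:Regions} and Lemma~\ref{lem:TooClose} and then getting the reverse inequality by the $180^\circ$-rotation symmetry $t\mapsto t_R$. Your Dilworth--Mirsky count is attractive in the interior regime $k-b<\alpha<a$ (though even there the claim that some chain of $R_c$ meets every scrollar line exactly once is asserted rather than proved, and deserves a short argument), and it would give a slightly weaker but still sufficient conclusion, namely $\mathrm{Im}(t')=\mathrm{Im}(t)$ and hence $\mathbb{T}(t')=\mathbb{T}(t)$.

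The gap is the edge regime, and you name it yourself. When $\alpha=a$ or $\alpha=k-b$, condition~(1) of Definition~\ref{Def:Scrollar} forces $t$ to be standard, the poset $R_c$ contains incomparable cells as soon as $a+b>k$, and Mirsky's bound genuinely undercounts: in your own notation $\ell_c$ can be strictly less than the number of class-$c$ symbols of $t$, so the argument does not close. Your proposed fix --- ``argue by induction over the cells of $[a]\times[b]$ in a linear extension of the poset, showing that $t'(x,y)$ is forced cell by cell'' --- is a plan, not a proof; in particular you never show how the wrap-around inequalities $t(1,y)>t(a,y+a-k)$ (resp.\ $t(x,1)>t(x+b-k,b)$) combine with Lemma~\ref{lem:Containment} to pin down $t'$ on cells lying in a different band from the realizing chain. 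This is exactly the content that Remark~\ref{Rmk:DiffDef} flags as the subtle point of the edge cases, and it is precisely what the paper's proof handles explicitly in the two sub-arguments for when $(n-\ell\alpha,m+\ell(k-\alpha))$ falls outside $[a]\times[b]$, using conditions~(2)/(3) of Definition~\ref{Def:Scrollar} to produce the strict inequalities $t(1,m+\ell(k-\alpha))>t(\alpha,m+(\ell-1)(k-\alpha))$ and $t(n-\ell\alpha-1,b)<t(n-1,m)$. Without supplying that step, the edge cases --- which are needed for every $\alpha$ with $q=1$ or $q'=1$, and in particular for $\alpha=r+1$ --- remain unproved.
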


Before proving Theorem~\ref{thm:Maximal}, we first make two simple observations.  These will be useful because, if $\mathbb{T}(t) \subseteq \mathbb{T}(t')$, then by Lemma~\ref{lem:Containment}, for every box $(n,m)$ in $[a]\times[b]$, there exists a box $(x,y)$ such that $t'(n,m) = t(x,y)$.  Our argument will break into cases, depending on the location of $(x,y)$ relative to that of $(n,m)$.

\begin{lemma}
\label{lem:Regions} 
Let $\alpha$ be a positive integer and $(n,m)$ any box in $[a]\times[b]$.  For any box $(x,y)$ in $[a]\times[b]$, there exists an integer $\ell$ such that one of the following holds:
\begin{enumerate}
    \item $x \leq n-\ell\alpha$ and $y \leq m+\ell(k-\alpha)$, 
    \item $x \geq n-\ell\alpha$ and $y \geq m+\ell(k-\alpha)$, or
    \item $n-(\ell+1)\alpha < x < n-\ell\alpha$ and $m+\ell(k-\alpha) < y < m+(\ell+1)(k-\alpha)$.
\end{enumerate}
\end{lemma}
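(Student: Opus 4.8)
The plan is to translate the three alternatives into a single statement about a pair of real numbers, and then verify that statement by a short case analysis on floors. Fix the box $(n,m)$, let $(x,y)\in[a]\times[b]$ be arbitrary, and note that in the setting of this section $\alpha\leq k-1$, so both $\alpha$ and $k-\alpha$ are positive. Set
\[
u := \frac{n-x}{\alpha}, \qquad w := \frac{y-m}{k-\alpha}.
\]
Dividing the inequalities in (1), (2), (3) by the positive numbers $\alpha$ and $k-\alpha$, one checks directly that alternative (1) holds for a given integer $\ell$ if and only if $w\leq\ell\leq u$; alternative (2) holds for a given $\ell$ if and only if $u\leq\ell\leq w$; and alternative (3) holds for a given $\ell$ if and only if $\ell<u<\ell+1$ and $\ell<w<\ell+1$, i.e.\ if and only if $u$ and $w$ are both non-integral with $\lfloor u\rfloor=\ell=\lfloor w\rfloor$. (Geometrically, the points $P_\ell=(n-\ell\alpha,\,m+\ell(k-\alpha))$ lie on a line of negative slope; alternatives (1) and (2) say that $(x,y)$ is weakly southwest, resp.\ weakly northeast, of some $P_\ell$, and (3) says $(x,y)$ lies in the open axis-parallel rectangle with opposite corners $P_\ell$ and $P_{\ell+1}$. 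These regions tile the plane, which is the content of the lemma.)

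It therefore suffices to prove the following purely real-variable claim: for any reals $u,w$, either the closed interval with endpoints $u$ and $w$ contains an integer, or $u$ and $w$ are non-integers lying strictly between the same pair of consecutive integers. Indeed, if that closed interval contains an integer $\ell$, then $\ell$ lies between $u$ and $w$, and according to whether $u\leq w$ or $w\leq u$ this yields alternative (2) or (1); in the remaining case alternative (3) holds with $\ell=\lfloor u\rfloor$.

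To prove the claim, suppose the closed interval $I$ with endpoints $u$ and $w$ contains no integer. Then $u$ and $w$, being the endpoints of $I$, are not integers; put $\ell:=\lfloor u\rfloor$, so $\ell<u<\ell+1$. If $w\geq\ell+1$ then $u<\ell+1\leq w$, hence $u<w$ and $\ell+1\in[u,w]=I$, a contradiction; if $w\leq\ell$ then $w\leq\ell<u$, hence $w<u$ and $\ell\in[w,u]=I$, again a contradiction. Thus $\ell<w<\ell+1$, so $u$ and $w$ both lie strictly between $\ell$ and $\ell+1$, as required. The only point needing care is this boundary bookkeeping — the observation that $u$ and $w$ cannot themselves be integers once $I$ avoids $\ZZ$, and the correct handling of the weak versus strict inequalities — so there is no substantive obstacle; the lemma is exactly the elementary covering statement above.
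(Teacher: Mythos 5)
Your proof is correct, and it is essentially the paper's argument in normalized coordinates: the paper also picks $\ell=\lfloor (n-x)/\alpha\rfloor$ via the division algorithm and then cases on where $y$ falls relative to $m+\ell(k-\alpha)$ and $m+(\ell+1)(k-\alpha)$, which is exactly your dichotomy on whether the interval between $u$ and $w$ contains an integer. Your rescaling to $u$ and $w$ is a mild stylistic tidying that makes the symmetry between the two coordinates explicit, but the underlying covering argument is the same.
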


\begin{proof}
By the division algorithm, there exists an integer $\ell$ such that 
\[
n-(\ell+1)\alpha < x \leq n-\ell\alpha .
\]
If $y \leq m+\ell(k-\alpha)$, then case (1) holds. If $y \geq m+(\ell+1)(k-\alpha)$, or if $x = n-\ell\alpha$ and $y \geq m+\ell(k-\alpha)$, then case (2) holds.  Otherwise, $x \neq n-\ell\alpha$, and case (3) holds.
\end{proof}

Lemma~\ref{lem:Regions} is illustrated in Figure~\ref{Fig:Regions}.  Boxes of the form $(n-\ell\alpha, m+\ell(k-\alpha))$ are labeled with stars, and the three cases of Lemma~\ref{lem:Regions} are depicted in gray.  Note that every box is contained in one of the three gray regions.

\begin{figure}[H]
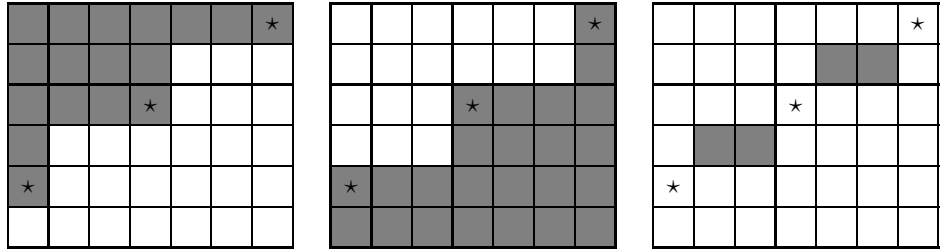

\begin{ytableau}
*(gray){ } & *(gray){ } & *(gray){ } & *(gray){ } & *(gray){ } & *(gray){ } & *(gray)\star \\
*(gray){ } & *(gray){ } & *(gray){ } & *(gray) & { } & { } & { } \\
*(gray){ } & *(gray){ } & *(gray){ } & *(gray)\star & { } & { } & { } \\
*(gray) & { } & { } & { } & { } & { } & { } \\
*(gray)\star & { } & { } & { } & { } & { } & { } \\
{ } & { } & { } & { } & { } & { } & { } \\
\end{ytableau}
\hspace{.1in}
\begin{ytableau}
{ } & { } & { } & { } & { } & { } & *(gray)\star \\
{ } & { } & { } & { } & { } & { } & *(gray){ } \\
{ } & { } & { } & *(gray)\star & *(gray){ } & *(gray){ } & *(gray){ } \\
{ } & { } & { } & *(gray){ } & *(gray){ } & *(gray){ } & *(gray){ } \\
*(gray)\star & *(gray){ } & *(gray){ } & *(gray){ } & *(gray){ } & *(gray){ } & *(gray){ } \\
*(gray){ } & *(gray){ } & *(gray){ } & *(gray){ } & *(gray){ } & *(gray){ } & *(gray){ } \\
\end{ytableau}
\hspace{.1in}
\begin{ytableau}
{ } & { } & { } & { } & { } & { } & \star \\
{ } & { } & { } & { } & *(gray){ } & *(gray){ } & { } \\
{ } & { } & { } & \star & { } & { } & { } \\
{ } & *(gray){ } & *(gray){ } & { } & { } & { } & { } \\
\star & { } & { } & { } & { } & { } & { } \\
{ } & { } & { } & { } & { } & { } & { } \\
\end{ytableau}

\caption{The three regions described in Lemma~\ref{lem:Regions}.}
\label{Fig:Regions}
\end{figure}

\begin{remark}
If $\alpha$ is equal to either $a$ or $k-b$, then the integer $\ell$ in Lemma~\ref{lem:Regions} can be taken to be one of $-1$, $0$, or $1$, as illustrated in Figure~\ref{Fig:EdgeCase}.  If $\ell = \pm 1$, then the box $(n-\ell\alpha,m+\ell(k-\alpha))$ is not contained in $[a]\times[b]$.
\end{remark}

\begin{figure}[H]
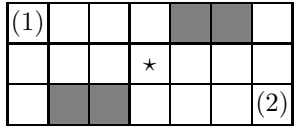

\begin{ytableau}
(1) & { } & { } & { } & *(gray){ } & *(gray){ } & { } \\
{ } & { } & { } & \star & { } & { } & { } \\
{ } & *(gray){ } & *(gray){ } & { } & { } & { } & (2) \\
\end{ytableau}

\caption{When $\alpha = k-b$, the integer $\ell$ can be taken to be one of $-1$, $0$, or $1$.}
\label{Fig:EdgeCase}
\end{figure}

The following simple lemma is key to our argument.

\begin{lemma}
\label{lem:TooClose} 
Let $\alpha$ be a positive integer, let $(n,m)$ be any box in $[a]\times[b]$, and $(x,y)$ a box satisfying condition (3) of Lemma~\ref{lem:Regions}.  Then 
\[
x-y\not\equiv n-m\pmod k.
\]
\end{lemma}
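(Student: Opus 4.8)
The plan is to unwind the two inequalities from condition (3) and show that the quantity $(n-m)-(x-y)$ is trapped strictly between $0$ and $k$, so it cannot be divisible by $k$. First I would rewrite condition (3) by subtracting the $y$-chain of inequalities from the $x$-chain: from $n-(\ell+1)\alpha < x < n-\ell\alpha$ and $m+\ell(k-\alpha) < y < m+(\ell+1)(k-\alpha)$, I get $-(\ell+1)(k-\alpha) < y-m-\ell(k-\alpha) $ and similarly for $x$, but more directly I would compute $x-y$ in terms of $(n-m)$ and the offsets. Writing $x = n - \ell\alpha - s$ with $0 < s < \alpha$ and $y = m + \ell(k-\alpha) + u$ with $0 < u < k-\alpha$ (the strict inequalities in (3) guarantee $s$ and $u$ lie strictly inside these ranges), I obtain
\[
(n-m)-(x-y) = \ell\alpha + s + \ell(k-\alpha) + u = \ell k + s + u.
\]
Since $0 < s < \alpha$ and $0 < u < k-\alpha$, we have $0 < s+u < k$, hence $(n-m)-(x-y) \equiv s+u \not\equiv 0 \pmod k$, which is exactly the claim $x-y \not\equiv n-m \pmod k$.

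The key steps in order are: (i) extract from condition (3) the representation $x = n-\ell\alpha - s$, $y = m+\ell(k-\alpha)+u$ with $s \in \{1,\dots,\alpha-1\}$ and $u \in \{1,\dots,k-\alpha-1\}$ — here I must be careful that the \emph{strict} inequalities on both sides of (3) are what force $s$ and $u$ to avoid the endpoints $0$ and $\alpha$ (resp.\ $0$ and $k-\alpha$); (ii) compute $(n-m)-(x-y) = \ell k + (s+u)$; (iii) observe $1 \le s+u \le (\alpha-1)+(k-\alpha-1) = k-2 < k$, so $s+u$ is a nonzero residue mod $k$; (iv) conclude. The arithmetic is elementary once the bookkeeping of offsets is set up correctly.

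The main obstacle is purely one of care rather than depth: making sure the inequalities in condition (3) are transcribed with the correct strictness and that the resulting ranges for $s$ and $u$ are the half-open-turned-closed integer ranges $\{1,\dots,\alpha-1\}$ and $\{1,\dots,k-\alpha-1\}$ — in particular one needs $\alpha \ge 2$ and $k-\alpha \ge 2$ for such integers to exist, but if either range is empty then condition (3) cannot be satisfied by any integer box $(x,y)$ and the lemma holds vacuously, so no separate argument is needed. I would state the offset substitution explicitly, note the vacuous case in one sentence, and then the displayed computation finishes the proof.
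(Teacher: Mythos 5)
Your proof is correct and is essentially the paper's own argument: the paper simply subtracts the $y$-inequalities from the $x$-inequalities to trap $x-y$ strictly between $(n-m)-(\ell+1)k$ and $(n-m)-\ell k$, which is exactly your computation $(n-m)-(x-y) = \ell k + (s+u)$ with $0 < s+u < k$, just without naming the offsets $s$ and $u$ explicitly.
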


\begin{proof}
Since 
\[
n-(\ell+1)a < x < n-\ell a
\]
and
\[
m+\ell(k-a) < y < m+(\ell+1)(k-a),
\]
we have 
\[
(n-m)-(\ell+1)k < x-y < (n-m)-\ell k.
\]
Hence $x-y\not\equiv n-m\pmod k$. 
\end{proof}

We are now prepared to prove the main result of this section, the maximality of scrollar tableaux.

\begin{proof}[Proof of Theorem~\ref{thm:Maximal}]
Let $t'$ be a $k$-uniform displacement tableau such that $\mathbb{T}(t)\subseteq\mathbb{T}(t')$.  We will show that $t=t'$.  We first demonstrate, by induction, that $t'(n,m)\geq t(n,m)$ for all $(n,m) \in [a]\times[b]$.  The base case $t'(1,1)\geq t(1,1)$ holds because, by Lemma~\ref{lem:Containment}, $t'(1,1)$ must be a symbol in $t$, and $t(1,1)$ is the smallest symbol in $t$.

For our inductive hypothesis, suppose that $t'(x,y) \geq t(x,y)$ for all $(x,y)$ such that $x\leq n$ and $y\leq m$, not both equal.  We will show that $t'(n,m)\geq t(n,m).$  By Lemma~\ref{lem:Containment}, there exists $(x,y) \in [a]\times[b]$ such that $t'(n,m)=t(x,y)$.  By Lemma~\ref{lem:Regions}, there exists an integer $\ell$ such that one of the following holds:
\begin{enumerate}
    \item $x \leq n-\ell\alpha$ and $y \leq m+\ell(k-\alpha)$, 
    \item $x \geq n-\ell\alpha$ and $y\geq m+\ell(k-\alpha)$, or
    \item $n-(\ell+1)\alpha < x < n-\ell\alpha$ and $m+\ell(k-\alpha) < y < m+(\ell+1)(k-\alpha)$.
\end{enumerate}
If $(x,y)$ satisfies (3), then by Lemma~\ref{lem:TooClose}, $x-y \not\equiv n-m \pmod k$, a contradiction to Lemma~\ref{lem:Containment}.  Hence $(x,y)$ must satisfy either (1) or (2).

There are now two cases to consider -- the case where the box $(n-\ell\alpha,m+\ell(k-\alpha))$ is contained in $[a]\times[b]$, and the case where it is not.  We first consider the case where $(n-\ell\alpha,m+\ell(k-\alpha))$ is contained in $[a]\times[b]$.  Notice that, if $\alpha$ is equal to $a$ or $k-b$, then in this case we must have $\ell = 0$.  If $(x,y)$ satisfies (2), then 
\[
t(x,y) \geq t(n-\ell\alpha,m+\ell(k-\alpha)) = t(n,m),
\]
hence $t'(n,m) \geq t(n,m)$, as desired.  If $(x,y)$ satisfies (1) and $(x,y) \neq (n-\ell\alpha,m+\ell(k-\alpha))$, we have either 
\begin{align*}
t'(n,m) &= t(x,y) \leq t(n-\ell\alpha-1,m+\ell(k-\alpha)), \mbox{ or} \\
t'(n,m) &= t(x,y) \leq t(n-\ell\alpha,m+\ell(k-\alpha)-1) .
\end{align*}
First, assume that $n,m > 1$.  Since $t$ is scrollar, we have 
\begin{align*}
t(n-\ell\alpha-1,m+\ell(k-\alpha)) &= t(n-1,m) \mbox{ and } \\
t(n-\ell\alpha,m+\ell(k-\alpha)-1) &= t(n,m-1) .
\end{align*}
By our inductive hypothesis, however, we have $t(n-1,m) \leq t'(n-1,m)$ and $t(n,m-1) \leq t'(n,m-1)$.  This guarantees that either $t'(n,m) \leq t'(n-1,m)$ or $t'(n,m) \leq t'(n,m-1)$, a contradiction.  It follows that
\[
t'(n,m) = t(x,y) = t(n-\ell\alpha,m+\ell(k-\alpha)) = t(n,m).
\]
Now, suppose that $m=1$ and $n>1$.  The case where $n=1$ will follow from a similar argument.  Without loss of generality, let $\ell$ be the smallest integer such that $(x,y)$ is above and to the right of $(n-\ell\alpha,m+\ell(k-\alpha))$.  If $x < n-\ell\alpha$, then the conclusion follows from the argument above.  On the other hand, if $x = n-\ell\alpha$, then since 
\[
m + (\ell-1)(k-\alpha) < y < m+\ell(k-\alpha),
\]
we see that $x-y \neq n-m \pmod{k}$, a contradiction to Lemma~\ref{lem:Containment}.

We now turn to the case where $(n-\ell\alpha,m+\ell(k-\alpha))$ is not contained in $[a]\times[b]$.  First, suppose that $(x,y)$ satisfies (2).  In this case, either $n-\ell\alpha \leq 0$ or $m+\ell(k-\alpha) \leq 0$, but not both.  We will assume that $n-\ell\alpha \leq 0$; the other case follows by a similar argument.  If $\ell$ is any integer satisfying $n-\ell\alpha \leq 0$, then $(x,y)$ is below and to the right of $(1,m+\ell(k-\alpha))$.  We may therefore assume without loss of generality that $\ell$ is the minimal integer such that $n-\ell\alpha \leq 0$.  If $\alpha$ is equal to $a$, then $\ell = 1$.  Because $t$ is scrollar, we observe that
\begin{align*}
t'(n,m) = t(x,y) &\geq t(1,m+\ell(k-\alpha)) > t(\alpha,m+(\ell-1)(k-\alpha)) \\
&\geq t(n-(\ell-1)\alpha,m+(\ell-1)(k-\alpha)) = t(n,m) .
\end{align*}

Now, suppose that $(x,y)$ satisfies (1).  In this case, either $b < m+\ell(k-\alpha)$ or $a < n-\ell\alpha$, but not both.  We will assume that $b < m+\ell(k-\alpha)$.  The other case follows by a similar argument.  Without loss of generality, assume that $\ell$ is the minimal integer such that $b < m + \ell(k-\alpha)$.  As above, if $\alpha = k-b$, then $\ell = 1$.  If $y \leq m+(\ell-1)(k-\alpha)$, then by replacing $\ell$ with $\ell-1$, we may reduce to the case where $(n-\ell\alpha,m+\ell(k-\alpha))$ is in $[a]\times[b]$.  We may therefore assume that
\[
m+(\ell-1)(k-\alpha) < y \leq b < m + \ell(k-\alpha).
\]
This situation is illustrated in Figure~\ref{Fig:Outside}.  The boxes $(n-\ell\alpha,m+\ell(k-\alpha))$ and $(n-(\ell-1)\alpha,m+(\ell-1)(k-\alpha))$ are labeled with stars, the box $(n-\ell\alpha,b)$ is labeled with a diamond, and the box $(x,y)$ is located somewhere in the shaded region.

\begin{figure}[h!]
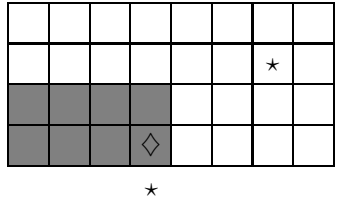


\begin{ytableau}
{ } & { } & { } & { } & { } & { } & { } & { } \\
{ } & { } & { } & { } & { } & { } & \star & { } \\
*(gray){ } & *(gray){ } & *(gray){ } & *(gray){ } & { } & { } & { } & { } \\
*(gray){ } & *(gray) & *(gray){ } & *(gray)\diamondsuit & { } & { } & { } & { } \\
\end{ytableau}\\
\vspace{0.075in} \hspace{-0.25in} $\star$

\caption{An illustration of the case where $(n-\ell\alpha,m+\ell(k-\alpha))$ is not contained in $[a]\times[b]$.}
\label{Fig:Outside}
\end{figure}

If $x = n-\ell\alpha$, then since
\[
m+(\ell-1)(k-\alpha) < y < m + \ell(k-\alpha),
\]
we see that $x-y \not\equiv n-m \pmod k$, a contradiction to Lemma~\ref{lem:Containment}.  We may therefore assume that $x < n-\ell\alpha$.  Because $t$ is scrollar, we have
\begin{align*}
t'(n,m) = t(x,y) &\leq t(n-\ell\alpha-1,b) < t(n-(\ell-1)\alpha-1,b+1-(k-\alpha)) \\
&\leq t(n-(\ell-1)\alpha-1,m+(\ell-1)(k-\alpha)) = t(n-1,m).
\end{align*}
By induction, however, we have $t(n-1,m) \leq t'(n-1,m)$, hence $t'(n,m) \leq t'(n-1,m)$, a contradiction.

Thus, in every case we see that $t'(n,m) \geq t(n,m)$.  We now show that $t'(n,m)\leq t(n,m)$ for all $(n,m) \in [a]\times[b]$.  Combining the two inequalities, we see that $t'=t$.  Given a tableau $t$, define the ``rotated'' tableau $t_R$ as follows:
\[
t_R (x,y) = g+1-t(a+1-x,b+1-y) .
\]
(See Figure~\ref{Fig:Rotate} for an example.)
Returning to our tableaux $t$ and $t'$, we see that by definition, both $t_R$ and $t'_R$ are $k$-uniform displacement tableaux, the tableau $t_R$ is scrollar, and $\mathbb{T}(t_R) \subseteq \mathbb{T}(t'_R)$.  By the argument above, we see that $t'_R(n,m)\geq t_R(n,m)$ for all $(n,m) \in [a]\times[b]$, hence $t'(n,m)\leq t(n,m)$ for all $(n,m) \in [a]\times[b]$, and the conclusion follows.
\end{proof}

\begin{figure}[H]
\begin{ytableau}
*(gray)1 & *(gray)2 & *(gray)4 & 5 & 10 & 11 & 12 \\
*(gray)3 & *(gray)7 & *(gray)8 & 9 & 13 & 16 & 18 \\
*(gray)5 & *(gray)10 & *(gray)11 & 12 & 15 & 17 & 20 \\
*(gray)9 & *(gray)13 & *(gray)16 & *(gray)18 & *(gray)19 & *(gray)22 & *(gray)23 \\
*(gray)12 & *(gray)15 & *(gray)17 & *(gray)20 & *(gray)21 & *(gray)24 & *(gray)26 \\
\end{ytableau}
\hspace{.2in}
\begin{ytableau}
*(gray)26 & *(gray)24 & *(gray)21 & 20 & 17 & 15 & 12 \\
*(gray)23 & *(gray)22 & *(gray)19 & 18 & 16 & 13 & 9 \\
*(gray)20 & *(gray)17 & *(gray)15 & 12 & 11 & 10 & 5 \\
*(gray)18 & *(gray)16 & *(gray)13 & *(gray)9 & *(gray)8 & *(gray)7 & *(gray)3 \\
*(gray)12 & *(gray)11 & *(gray)10 & *(gray)5 & *(gray)4 & *(gray)2 & *(gray)1 \\
\end{ytableau}
\hspace{.2in}
\begin{ytableau}
*(gray)1 & *(gray)3 & *(gray)6 & 7 & 10 & 12 & 15 \\
*(gray)4 & *(gray)5 & *(gray)8 & 9 & 11 & 14 & 18 \\
*(gray)7 & *(gray)10 & *(gray)12 & 15 & 16 & 17 & 22 \\
*(gray)9 & *(gray)11 & *(gray)14 & *(gray)18 & *(gray)19 & *(gray)20 & *(gray)24 \\
*(gray)15 & *(gray)16 & *(gray)17 & *(gray)22 & *(gray)23 & *(gray)25 & *(gray)26 \\
\end{ytableau}
\caption{To obtain the ``rotation'' of the tableau on the left, first rotate 180 degrees, and then subtract each entry from $g+1$.}
\label{Fig:Rotate}
\end{figure}

\section{Non-Existence of Other Maximal Tableaux}
\label{Sec:NonMax}

In this section, we prove the following.

\begin{theorem}
\label{thm:NotMax}
Let $t$ be a $k$-uniform displacement tableau on $[a]\times[b]$.  Then there exists a scrollar tableau $t'$ on $[a]\times[b]$ such that $\mathbb{T}(t) \subseteq \mathbb{T}(t')$.
\end{theorem}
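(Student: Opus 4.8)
The plan is to construct the scrollar tableau $t'$ greedily and then verify its properties. Process the boxes of $[a]\times[b]$ in any order in which every $(x,y)$ is preceded by its left and top neighbours (for instance, in order of increasing $x+y$), and set $t'(x,y)$ equal to the smallest symbol $s$ of $t$ whose displacement --- the common value of $u-v\pmod k$ over all boxes $(u,v)$ with $t(u,v)=s$, well defined since $t$ is a $k$-uniform displacement tableau --- is congruent to $x-y\pmod k$ and which satisfies $s>t'(x-1,y)$ and $s>t'(x,y-1)$, omitting a constraint for any neighbour that does not exist. The first, easy, step is to check by induction along the processing order that $t(x,y)$ is always an eligible symbol: it has displacement $x-y$, and the inductive hypothesis gives $t'(x-1,y)\le t(x-1,y)<t(x,y)$ and $t'(x,y-1)\le t(x,y-1)<t(x,y)$. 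Hence $t'$ is well defined, satisfies $t'\le t$ entrywise, and is a genuine tableau (the defining inequalities are built into the construction); and since every symbol of $t'$ is a symbol of $t$ carrying a congruent displacement, Lemma~\ref{lem:Containment} gives $\mathbb{T}(t)\subseteq\mathbb{T}(t')$. (Alternatively one could take $t'$ to be any tableau that is maximal with respect to containment among those lying above $t$, which exists by finiteness; either way the remaining task is the same.)

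It remains to show that $t'$ is scrollar of some admissible type $\alpha$, and this is the substance of the argument. I would first record the elementary observation that if $t'(x,y)=t'(x',y')$ with $x<x'$ then $y>y'$ (columns of a tableau strictly increase) and $(x'-x)+(y-y')$ is a positive multiple of $k$ (the displacements of the two boxes agree), so the boxes carrying a fixed symbol of $t'$ form a staircase whose steps are vectors $(\delta x,-\delta y)$ with $\delta x,\delta y\ge1$ and $\delta x+\delta y$ a positive multiple of $k$. The heart of the proof is to show that, ranging over all symbols, these steps are in fact one and the same vector $(\alpha,-(k-\alpha))$ for a single integer $1\le\alpha\le k-1$. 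The greedy rule is what forces this: a step with $\delta x+\delta y\ge 2k$ would leave room for the same symbol to recur at an intermediate box of the correct displacement, contradicting that consecutive occurrences of a symbol are adjacent along the staircase --- so every step has $\delta x+\delta y=k$ --- and two steps with different $\delta x$ values would, when propagated through the greedy choices, force a symbol violating either one of the tableau inequalities or the displacement congruence. This last point I expect to run closely parallel to the proof of Theorem~\ref{thm:Maximal} in Section~\ref{Sec:Max}: one uses Lemma~\ref{lem:Regions} to position a hypothetical inconsistent repeat relative to the $\alpha$-translates of a reference box, and Lemma~\ref{lem:TooClose} to eliminate the ``too close'' configuration outright.

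Finally the edge cases must be handled. If $t'$ has no repeated symbol, it is standard, and one checks this is only possible when $a\le k-1$ or $b\le k-1$; in that regime the greedy rule is readily seen to supply condition (2) or (3) of Definition~\ref{Def:Scrollar}. When $\alpha$ equals $a$ or $k-b$, the corresponding extra condition (2) or (3) is likewise verified directly from the greedy rule, in the spirit of the discussion in Remark~\ref{Rmk:DiffDef}. Admissibility of $\alpha$ is automatic from the staircase: a repeat inside $[a]\times[b]$ has $\delta x\le a-1$ and $\delta y\le b-1$, so $\alpha=\delta x\le\min\{a,k-1\}$ and $\alpha=k-\delta y\ge k-b$, while $\alpha\le k-1$ was secured above.

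I expect the main obstacle to be the consistency statement: that the greedy process cannot combine a ``type $\alpha$'' repeat with a ``type $\alpha'$'' one. It is essentially the converse of Theorem~\ref{thm:Maximal}, and since that theorem already required a somewhat delicate box-by-box induction with several sub-cases --- the reference box $(n-\ell\alpha,m+\ell(k-\alpha))$ lying inside versus outside $[a]\times[b]$, plus the boundary rows and columns --- I would anticipate a comparably intricate analysis here, with the added wrinkle that one must produce the repeats and prove their uniformity at the same time.
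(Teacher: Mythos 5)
Your greedy construction is not correct, and the rest of your proposal acknowledges rather than closes the resulting gap. Here is a concrete counterexample. Take $k=4$, $a=b=3$, and let $t$ be the standard tableau
\[
\begin{ytableau}
1 & 2 & 3 \\
4 & 5 & 7 \\
6 & 8 & 9
\end{ytableau}
\]
(so $t(x,y)$ is read with $x$ the column and $y$ the row). Since $t$ is injective it is automatically a $4$-uniform displacement tableau. Running your greedy rule --- process boxes in order of increasing $x+y$, always choosing the smallest symbol of $t$ with the correct residue $x-y\bmod 4$ that exceeds the left and top neighbours --- reproduces $t$ exactly. In particular at the box $(3,1)$ the eligible symbols with residue $2$ are $\{3,6\}$ and the greedy rule picks $3$, while at $(1,3)$ it is forced to pick $6$, so no repeat is created. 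But $t$ is not scrollar of any type $\alpha\in\{1,2,3\}$: for type~$3$ one needs $t(1,3)>t(3,2)$, i.e.\ $6>7$, which fails; for type~$1$ one needs $t(2,1)>t(1,3)$, i.e.\ $2>6$, which fails; and for type~$2$, condition~(1) of Definition~\ref{Def:Scrollar} would require $t(1,3)=t(3,1)$, i.e.\ $6=3$, which fails. (The theorem of course still holds here: replacing the entry $3$ at $(3,1)$ by $6$ gives a tableau $t'$ that is scrollar of type~$2$ with $\mathbb{T}(t)\subsetneq\mathbb{T}(t')$; the greedy rule simply never discovers it, because choosing a \emph{larger} symbol at $(3,1)$ is what creates the needed repeat.) So the heuristic that the smallest eligible choice forces the repeats to line up into a single scrollar pattern is false, not merely unproven.

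Your alternative suggestion --- take $t'$ maximal with respect to containment and then argue it is scrollar --- is of course equivalent in strength to Theorem~\ref{thm:NotMax} itself, and the part of your proposal meant to close that step (``two steps with different $\delta x$ values would\dots force a symbol violating\dots'') is stated as an expectation rather than proved; you say explicitly that you ``anticipate a comparably intricate analysis here.'' That anticipated analysis is exactly the missing content. The paper resolves it by an entirely different device: it introduces the box statistic $S_t$ (Definition~\ref{defn:Statistic}), shows via Lemma~\ref{lem:BadConfigurations} that $S_t$ cannot increase by more than one along the relevant moves, defines \emph{admissible paths} whose horizontal steps have $S_t\le\alpha$ and vertical steps have $S_t\ge\alpha$ (Definition~\ref{defn:Paths}), proves by induction on $a+b$ that such a path always exists (Proposition~\ref{prop:AdmissibleExistence}, using the non-crossing Lemma~\ref{lem:NonCrossing}), and then builds $t'$ column-by-column and row-by-row \emph{guided by the path}, replacing entries with their $(\alpha,\,\alpha-k)$-translates. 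It is this path, not a pointwise smallest-symbol rule, that decides where the repeats of $t'$ should go; in the counterexample the admissible path has type $2$ and the construction places $6$ at $(3,1)$. To fix your proposal you would need either to abandon the greedy rule in favour of something like the path-guided substitution, or to prove directly that a containment-maximal displacement tableau is scrollar, which would require supplying a genuine argument for the consistency of the repeat vectors rather than deferring it.
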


\noindent Together with Theorem~\ref{thm:Maximal}, this establishes Theorem~\ref{thm:MaxScrollar}.

To prove Theorem~\ref{thm:NotMax}, we will describe an algorithm that, starting with $t$, produces a scrollar tableau $t'$ by replacing certain symbols in $t$ with other symbols in $t$.  We first introduce a statistic on the boxes in a $k$-uniform displacement tableau.

\begin{definition}
\label{defn:Statistic}
Let $t$ be a $k$-uniform displacement tableau on $[a]\times[b]$.  Given a box $(x,y)$ such that $x+y \geq k$, we define a statistic $S_t(x,y)$ as follows.  Consider the symbols appearing above $(x,y)$ in column $x$ and to the left of $(x,y)$ in row $y$.  Among these symbols, the $k-1$ largest ones form a hook of width $\alpha$ and height $k-\alpha$.  We define $S_t(x,y)$ to be $\alpha$.
\end{definition}

\begin{remark}
Note that if $x+y < k$, then $S_t(x,y)$ is undefined.  In this case the box $(x,y)$ is left empty.  Additionally, the statistic $\alpha$ cannot appear in any box $(x,y)$ with $x < \alpha$ or $y < k-\alpha$.  In particular, for any $k$-uniform displacement tableau $t$, we have $S_t(\alpha,k-\alpha) = \alpha$.

Note also that $S_t$ is well-defined.  To see this let $i$ be the smallest positive integer such that $t(x-i,y) = t(x,y-j)$ for some positive integer $j$. By the definition of a $k$-uniform displacement tableau, $i+j$ must be a multiple of $k$.  It follows that the hook from $(x-i,y)$ to $(x,y-j)$ contains at least $k-1$ distinct symbols, all greater than $t(x-i,y)$.
\end{remark}

\begin{example}\label{example:Working}
Figure~\ref{Fig:Statistic} depicts an example of a $5$-uniform displacement tableau $t$ on $[4]\times[4]$.  The first figure is $t$, the second is $S_t$, and the last two depict example hooks of width 2 and 3, respectively.

\begin{figure}[H]
\begin{ytableau}
1 & 2 & 3 & 9\\
4 & 6 & 7 & 10\\
5 & 8 & 11 & 13\\
12 & 14 & 15 & 16\\
\end{ytableau}
\begin{ytableau}
\none & & &  & 4\\
\none &  &  & 3 & 3\\
\none & & 2 & 3 & 2 \\
\none & 1 & 2 & 3 & 3\\
\end{ytableau}\hskip4ex
\begin{ytableau}
1 & 2 & 3 & 9\\
4 & *(gray)6 & 7 & 10\\
5 & *(gray)8 & 11 & 13\\
*(gray)12 & *(gray)14 & 15 & 16\\
\end{ytableau}\hskip4ex
\begin{ytableau}
1 & 2 & 3 & 9\\
 4 & 6 & 7 & 10\\
5 & 8 & 11 & *(gray)13\\
12 & *(gray)14 & *(gray)15 & *(gray)16\\
\end{ytableau}
\caption{A 5-uniform displacement tableau, its associated statistics, and some example hooks.}
\label{Fig:Statistic}
\end{figure}
\end{example}

Before proceeding further, we will first need the following property of the statistic $S_t$.

\begin{lemma}
\label{lem:BadConfigurations}
Let $t$ be a $k$-uniform displacement tableau.  We have the following inequalities on statistics:
\begin{align*}
S_t (x+1,y) &\leq S_t (x,y)+1 \\
S_t (x,y-1) &\leq S_t (x,y)+1 \\
S_t (x+1,y-1) &\leq S_t (x,y)+1.
\end{align*}
\end{lemma}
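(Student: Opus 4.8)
The plan is to derive all three inequalities from one counting argument, fed by a single structural fact about the statistic; I prove each inequality under the standing assumption that both of its sides are defined. Fix a box $(x,y)$ with $x+y\ge k$, set $\alpha=S_t(x,y)$, and write $R_{x,y}$ for the set of symbols occupying the first $x$ boxes of row $y$ together with the first $y$ boxes of column $x$. By definition the $k-1$ largest symbols of $R_{x,y}$ fill the hook $H$ whose horizontal arm is $\{(x',y):x-\alpha+1\le x'\le x\}$ and whose vertical arm is $\{(x,y'):y-k+\alpha+1\le y'\le y\}$; note that $1\le\alpha\le k-1$, $\alpha\le x$, and $k-\alpha\le y$ hold automatically, since $H$ is an honest sub-hook of $[a]\times[b]$. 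The first step is the \emph{displacement estimate}: if $x-\alpha\ge 1$, then
\[
t(x,\,y-k+\alpha+1)\;>\;t(x-\alpha,\,y).
\]
Indeed, the left box lies in $H$ (it is the top of the vertical arm), while the right box lies in $R_{x,y}\smallsetminus H$ (same row $y$, but a strictly earlier column, and not column $x$); moreover these two boxes carry different symbols, since their positions differ by exactly $k-1$ in the invariant $(\text{column}-\text{row})$ and the $k$-uniform displacement condition forbids equality. As $H$ consists of the $k-1$ largest symbols of $R_{x,y}$, every symbol of $H$ strictly exceeds every symbol of $R_{x,y}$ outside $H$, which is the claim. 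I read ``the $k-1$ largest symbols form $H$'' as ``$\min H>\max(R_{x,y}\smallsetminus H)$'', legitimate precisely because $k$-uniformity excludes the ties that would otherwise make the selection ambiguous.

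With the displacement estimate in hand, each inequality is proved by contradiction in exactly the same way; I describe $S_t(x+1,y)\le\alpha+1$. If the hook at $(x+1,y)$ had width $\ge\alpha+2$, its horizontal arm would contain the box $(x-\alpha,y)$, so $v:=t(x-\alpha,y)$ would be among the $k-1$ largest symbols of $R_{x+1,y}$. But I will exhibit strictly more than $k-1$ symbols of $R_{x+1,y}$ exceeding $v$: the $\alpha+2$ symbols $t(x-\alpha+1,y),\dots,t(x+1,y)$ in row $y$ exceed $v$ because the row is increasing, and for each $y'$ with $y-k+\alpha+1\le y'\le y-1$ the symbol $t(x+1,y')$ exceeds $v$ because $t(x+1,y')>t(x,y')\ge t(x,y-k+\alpha+1)>v$, the last step being the displacement estimate. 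That is $(\alpha+2)+(k-\alpha-1)=k+1$ symbols exceeding $v$, so $v$ has rank at least $k+2$ and cannot lie in the top $k-1$ — a contradiction. For $S_t(x,y-1)\le\alpha+1$ and $S_t(x+1,y-1)\le\alpha+1$ the arithmetic is identical after shifting the target box down one row: one chains the displacement estimate with one or two monotonicity steps such as $t(x-\alpha,y)>t(x-\alpha,y-1)>t(x-\alpha-1,y-1)$, and counts $\alpha+1$ symbols from the row together with $k-\alpha-1$ from the relevant column, overlapping in one corner box, for at least $k-1$ symbols strictly exceeding the candidate value; hence that value has rank at least $k$ and is pushed out of the top $k-1$.

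What remains is boundary bookkeeping, which I expect to be the fiddly (but shallow) part. If $x-\alpha<1$, i.e. $\alpha=x$, or if the candidate ``extra'' box would leave $[a]\times[b]$ entirely, then the hook at the neighbouring box simply cannot be wide enough to violate the bound and the inequality is trivial; and one must observe that the column range $y-k+\alpha+1,\dots,y$ used throughout always lies in $[1,y]$, which is exactly the condition $k-\alpha\le y$ guaranteed by $\alpha=S_t(x,y)$. The only genuine content is the displacement estimate together with the remark that an ``escaped'' symbol dominated by at least $k-1$ others in the enlarged row-and-column set cannot belong to the top $k-1$; once these are secured, the three inequalities fall out uniformly.
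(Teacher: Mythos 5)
Your approach is genuinely different from the paper's: the paper records both boundary inequalities encoded in $S_t(x,y)=\alpha$ (your displacement estimate together with its counterpart $t(x-\alpha+1,y)>t(x,y-k+\alpha)$), applies the same pair of inequalities at the shifted box, and strings together a chain of three strict inequalities to reach a contradiction. You instead combine the displacement estimate at $(x,y)$ with a cardinality argument at the shifted box, avoiding the second boundary inequality entirely.

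However, the counting step has a gap. You exhibit $k+1$ (resp.\ $k-1$) \emph{boxes} of the row-and-column set whose entries exceed $v=t(x-\alpha,y)$ and conclude that $v$ ``has rank at least $k+2$ and cannot lie in the top $k-1$.'' But $S_t$ is defined via the $k-1$ largest \emph{distinct} symbols --- a symbol may legitimately appear twice, once in the row and once in the column; this is exactly the content of the paper's remark on well-definedness of $S_t$ --- and a priori your row-entries and column-entries could coincide, collapsing the number of distinct values below $k-1$ and leaving the conclusion unsupported. Distinctness is in fact forced by $k$-uniformity: for $S_t(x+1,y)\le\alpha+1$, a coincidence $t(x',y)=t(x+1,y')$ with $x-\alpha+1\le x'\le x+1$ and $y-k+\alpha+1\le y'\le y-1$ would require $(x+1-x')+(y-y')\equiv 0\pmod k$, but this sum lies strictly between $0$ and $k$; analogous one-line checks handle the other two inequalities. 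You need to supply this verification. Relatedly, your parenthetical justification of the displacement estimate --- that $k$-uniformity ``excludes the ties that would otherwise make the selection ambiguous'' --- is slightly misaimed: ties between boxes both outside the hook do occur (e.g.\ $t(x-\alpha,y)=t(x,y-k+\alpha)$ is permitted, since there the difference in $x-y$ is exactly $k$) and do no harm; what your argument actually uses, and what you verify correctly, is only the non-coincidence of the specific pair $(x,y-k+\alpha+1)$ and $(x-\alpha,y)$.
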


\begin{proof}
Let $H$ be the hook containing the $k-1$ largest symbols appearing above $(x,y)$ in column $x$ and to the left of $(x,y)$ in row $y$.  By the definition of $S_t$, $H$ contains the boxes 
\[
(x,y+1-k+S_t(x,y)) \mbox{ and } (x+1-S_t(x,y),y),
\]
but not the boxes
\[
(x,y-k+S_t(x,y)) \mbox{ or } (x-S_t(x,y),y).
\]
It follows that
\[
t(x-S_t(x,y),y) < t(x,y+1-k+S_t(x,y)) \mbox{ and }
\]
\[
t(x+1-S_t(x,y),y) > t(x,y-k+S_t(x,y)).
\]
If $S_t(x+1,y) > S_t(x,y)+1$, then
\begin{align*}
t(x-S_t(x,y),y) \geq t(x+2-S_t(x+1,y),y) \\
> t(x+1,y-k+S_t(x+1,y)) > t(x,y+1-k+S_t(x,y)),
\end{align*}
a contradiction.

Similarly, if $S_t(x,y-1) > S_t(x,y)+1$, then
\begin{align*}
t(x+1-S_t(x,y),y) < t(x-S_t(x,y-1),y-1) \\
< t(x,y-k+S_t(x,y-1)) \leq t(x,y-k+S_t(x,y)),
\end{align*}
a contradiction.

Finally, if $S_t(x+1,y-1) > S_t(x,y)+1$, then
\begin{align*}
t(x-S_t(x,y),y) > t(x+2-S_t(x+1,y-1),y-1) \\
> t(x+1,y-1-k+S_t(x+1,y-1)) > t(x,y-k+S_t(x,y)),
\end{align*}
another contradiction.
\end{proof}

\begin{definition}\label{defn:Paths}
Let $t$ be a $k$-uniform displacement tableau on $[a]\times[b]$, and suppose that $a+b \geq k$.  An \emph{admissible path} $P$ of type $\alpha$ in $t$ is a sequence of boxes
\[
P = (x_0,y_0) , (x_1,y_1) , \ldots , (x_{a+b-k},y_{a+b-k})
\]
satisfying the following conditions:
\begin{enumerate}
    \item $(x_0,y_0) = (\alpha,k-\alpha)$ and $(x_{a+b-k},y_{a+b-k}) = (a,b)$.
    \item For all $i$, $(x_i,y_i)$ is equal to either $(x_{i-1}+1,y_{i-1})$ or $(x_{i-1},y_{i-1}+1)$. 
    \item If $(x_i,y_i) = (x_{i-1}+1,y_{i-1})$, then $S_t(x_i,y_i) \leq \alpha$.
    \item If $(x_i,y_i) = (x_{i-1},y_{i-1})+1$, then $S_t(x_i,y_i) \geq \alpha$.
\end{enumerate}
\end{definition}

In other words, an admissible path is a sequence of pairwise adjacent boxes starting at $(\alpha,k-\alpha)$ and ending in the bottom right corner of the tableau.  Every time the path moves right, the statistic in the new box must be at most $\alpha$, and every time the path moves down, the statistic in the new box must be at least $\alpha$.

\begin{example}
\label{example:Paths}

Figure~\ref{Fig:Paths} depicts the statistics $S_t$ for the tableau $t$ from Example~\ref{example:Working}, together with two admissible paths of type 3 shaded.  Note that the first path is admissible because the box labeled 2 is to the right of the previous box in the path.

\begin{figure}[H]
\begin{ytableau}
\none & & &  & 4\\
\none &  &  & *(gray)3 & 3\\
\none & & 2 & *(gray)3 & *(gray)2 \\
\none & 1 & 2 & 3 & *(gray) 3\\
\end{ytableau}\hskip2ex\begin{ytableau}
\none & & &  & 4\\
\none &  &  & *(gray)3 & 3\\
\none & & 2 & *(gray)3 & 2 \\
\none & 1 & 2 & *(gray)3 & *(gray) 3\\
\end{ytableau}
\caption{Two admissible paths of type 3}
\label{Fig:Paths}
\end{figure}
\end{example}

Note that an admissible path of type $a$ is completely vertical, and an admissible path of type $k-b$ is completely horizontal.  If there is an admissible path of type $a$ in a tableau $t$, then $S_t(a,y) = a$ for all $y \geq k-a$.  It follows that $t(1,y+k-a) > t(a,y)$ for all $y > k-a$, so $t$ is scrollar of type $a$.  Similarly, if there is an admissible path of type $k-b$ in a tableau $t$, then $t$ is scrollar of type $k-b$.

The first main goal of this section is to prove the existence of admissible paths.  That is, given a $k$-uniform tableau $t$ on $[a]\times[b]$, we show that there exists an integer $\alpha$ and a admissible path $P$ of type $\alpha$.  Our argument will require the following lemma.

\begin{lemma}
\label{lem:NonCrossing}
Let $t$ be a $k$-uniform displacement tableau.  If $P_1$ and $P_2$ are two admissible paths in $t$ of types $\alpha_1$ and $\alpha_2$, respectively, then $\alpha_1 = \alpha_2$.
\end{lemma}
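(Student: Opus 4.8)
The plan is to argue that two admissible paths of differing types would be forced to cross, and that a crossing point yields a contradiction with the defining inequalities (3) and (4) of admissible paths together with the monotonicity of the statistic $S_t$ proved in Lemma~\ref{lem:BadConfigurations}. First I would set up the picture: both $P_1$ and $P_2$ are monotone lattice paths in $[a]\times[b]$ ending at the common box $(a,b)$, with $P_i$ starting at $(\alpha_i, k-\alpha_i)$. Without loss of generality assume $\alpha_1 < \alpha_2$, so that $P_1$ starts strictly to the left of and strictly below $P_2$'s starting box (since $(\alpha_1,k-\alpha_1)$ and $(\alpha_2,k-\alpha_2)$ lie on the anti-diagonal $x+y=k$). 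Because both paths are monotone (each step is right or down) and they start on opposite sides of each other but end at the same box, they must meet; let $(x,y)$ be a box lying on both paths, and I would choose the \emph{first} such box in the natural order along $P_1$ (equivalently, the last box before which $P_1$ is strictly southwest of $P_2$).

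The key step is to analyze the steps of $P_1$ and $P_2$ entering (and leaving) the meeting box $(x,y)$. Since $P_1$ approaches $(x,y)$ from the strictly-southwest region and $P_2$ from the strictly-northeast side — more precisely, just before $(x,y)$, path $P_1$ sits weakly below-left and $P_2$ weakly above-left — the step of $P_2$ arriving at $(x,y)$ is a \emph{rightward} step (from $(x-1,y)$) while the step of $P_1$ arriving at $(x,y)$ is a \emph{downward} step (from $(x,y-1)$); I would verify this by tracking which of the two is "higher" immediately before the merge, using that $\alpha_1<\alpha_2$ forces $P_1$ to be the lower path until they coincide. By condition (4) applied to $P_1$'s downward step, $S_t(x,y) \geq \alpha_1$; by condition (3) applied to $P_2$'s rightward step, $S_t(x,y) \leq \alpha_2$. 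That alone is consistent, so the real contradiction must come from a box where the two paths diverge \emph{after} merging, or from the symmetric configuration at the first divergence point; I would instead locate a box where $P_1$ steps right and $P_2$ steps down at comparable positions, giving $S_t \leq \alpha_1$ and $S_t \geq \alpha_2$ at nearby boxes, then invoke Lemma~\ref{lem:BadConfigurations} to propagate the bound $S_t(x',y') \leq \alpha_1 + (\text{small shift})$ to the box where $P_2$ demands $S_t \geq \alpha_2$, contradicting $\alpha_1 < \alpha_2 - 1$ unless $\alpha_2 = \alpha_1 + 1$, which I would rule out by a direct check at the merge box using that there $S_t(x,y)$ cannot simultaneously be both $\geq \alpha_1$ from one path's down-step and $\leq \alpha_1$ would be needed from the other — handling the off-by-one case with the sharp form of Lemma~\ref{lem:BadConfigurations}.

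The main obstacle I anticipate is making the "the paths must cross, and at the crossing one path goes right while the other goes down" argument fully rigorous, including the off-by-one case $\alpha_2 = \alpha_1 + 1$: the inequalities from Lemma~\ref{lem:BadConfigurations} only give $S_t(x+1,y), S_t(x,y-1), S_t(x+1,y-1) \leq S_t(x,y)+1$, so a single merge box does not immediately contradict $\alpha_2 = \alpha_1+1$; I expect to need to look at the boxes strictly before and after the merge and chain two or three of these inequalities, or else observe that two distinct admissible paths of consecutive types would have to diverge and re-merge, accumulating an impossible jump in $S_t$. If that chaining proves delicate, the fallback is to argue by contradiction on the pair $(\alpha_1,\alpha_2)$ with $\alpha_1$ minimal, deriving from a type-$\alpha_1$ and a type-$(\alpha_1{+}1)$ path a configuration of boxes $(x,y)$, $(x{+}1,y)$, $(x,y{-}1)$ where the statistics are forced to be $\alpha_1$, $\geq \alpha_1{+}1$, $\leq \alpha_1$ respectively, directly contradicting the first two inequalities of Lemma~\ref{lem:BadConfigurations}. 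Once the contradiction is reached, we conclude $\alpha_1 = \alpha_2$.
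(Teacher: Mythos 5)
The overall strategy is right and matches the paper's (find the first meeting box along antidiagonals and read off the two admissibility inequalities there), but you have the orientation of the two incoming steps reversed, and that error is what sends you chasing a phantom off-by-one case.

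With $\alpha_1 < \alpha_2$, $P_1$ starts at $(\alpha_1, k-\alpha_1)$, which in the English convention has smaller $x$ (farther left) and larger $y$ (farther down) than $P_2$'s start $(\alpha_2, k-\alpha_2)$. Track both paths antidiagonal by antidiagonal: before the first common box $(x,y)$, $P_1$ has strictly larger $y$-coordinate than $P_2$ on each antidiagonal, since their $y$-coordinates each change by $0$ or $1$ per step and cannot skip past each other without first coinciding. Hence on the antidiagonal just before the merge, $P_1$ occupies $(x-1,y)$ and $P_2$ occupies $(x,y-1)$. So it is $P_1$, the lower-left path of type $\alpha_1$, that enters $(x,y)$ by a \emph{rightward} step, giving $S_t(x,y) \leq \alpha_1$ by condition (3); and it is $P_2$, of type $\alpha_2$, that enters by a \emph{downward} step, giving $S_t(x,y) \geq \alpha_2$ by condition (4). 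Combining, $\alpha_2 \leq S_t(x,y) \leq \alpha_1$, contradicting $\alpha_1 < \alpha_2$ outright. You stated the opposite assignment (``the step of $P_2$ arriving at $(x,y)$ is rightward \ldots while the step of $P_1$ \ldots is downward''), which yields the harmless inequality $\alpha_1 \leq S_t(x,y) \leq \alpha_2$ and sends you looking for a contradiction that isn't there. Once the orientation is corrected, Lemma~\ref{lem:BadConfigurations} is not needed at all, there is no residual $\alpha_2 = \alpha_1 + 1$ case, and no analysis of post-merge divergence is required. The divergence-point fallback you sketch would in any case not close the gap: at a divergence the two admissibility inequalities land on two \emph{different} boxes, and Lemma~\ref{lem:BadConfigurations} only controls differences of $S_t$ between adjacent boxes up to $1$, so chaining it produces slack rather than a contradiction.
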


\begin{proof}
First, note that the last box in any admissible path is $(a,b)$, so any two admissible paths intersect.  Let $(x,y)$ be the box in the intersection that minimizes $x+y$.  Without loss of generality, assume that $\alpha_1 > \alpha_2$.  Note that $P_1$ starts at $(\alpha_1,k-\alpha_1)$, which is above and to the right of $(\alpha_2,k-\alpha_2)$.  Because $(x,y)$ is the first box at which the two paths cross, we see that $P_1$ must contain the box $(x,y-1)$ and $P_2$ must contain the box $(x-1,y)$.  By the definition of admissible paths, we have
\[
\alpha_1 \leq S_t(x,y) \leq \alpha_2,
\]
contradicting our assumption that $\alpha_1 > \alpha_2$.
\end{proof}

We now prove that admissible paths exist.

\begin{proposition}
\label{prop:AdmissibleExistence}
Let $t$ be a $k$-uniform displacement tableau on $[a]\times[b]$, and suppose that $a+b \geq k$.  Then there exists an admissible path in $t$.
\end{proposition}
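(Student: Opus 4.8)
The plan is to argue by induction on $a+b$, with the base cases being precisely those in which an admissible path is forced to run in a straight line. If $a+b=k$, then an admissible path has length $a+b-k=0$: it is the single box $(a,b)$, which by condition (1) must equal $(\alpha,k-\alpha)$, so the path is admissible of type $\alpha=a$ (conditions (3),(4) being vacuous). If $a=1$ (so $b\ge k-1$), the only monotone lattice path from the antidiagonal $x+y=k$ to $(1,b)$ is the vertical one starting at $(1,k-1)$; it is admissible of type $1$, since every downward step lands on a box whose statistic is automatically $\ge 1$. The case $b=1$ is symmetric, with the horizontal path of type $k-1$.

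For the inductive step, assume $a,b\ge 2$ and $a+b>k$. The restrictions $t|_{[a-1]\times[b]}$ and $t|_{[a]\times[b-1]}$ are again $k$-uniform displacement tableaux, and because $S_t(x,y)$ depends only on the entries of $t$ weakly above and weakly to the left of $(x,y)$, the statistic $S_t$ agrees with the statistics of these restrictions on all relevant boxes. By the inductive hypothesis we obtain an admissible path $P$ in $t|_{[a-1]\times[b]}$, of some type $\alpha_P$, running from $(\alpha_P,k-\alpha_P)$ to $(a-1,b)$, and an admissible path $Q$ in $t|_{[a]\times[b-1]}$, of some type $\alpha_Q$, running from $(\alpha_Q,k-\alpha_Q)$ to $(a,b-1)$; both are then admissible paths in $t$ as well. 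Write $\beta:=S_t(a,b)$. If $\beta\le\alpha_P$, appending the box $(a,b)$ to $P$ via a rightward step yields an admissible path of type $\alpha_P$ in $t$; if $\beta\ge\alpha_Q$, appending $(a,b)$ to $Q$ via a downward step yields an admissible path of type $\alpha_Q$. In either case we are done.

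It therefore remains to rule out the possibility $\alpha_P<\beta<\alpha_Q$, and this is the heart of the argument. The first step is a non-crossing observation in the spirit of Lemma~\ref{lem:NonCrossing}: since $\alpha_P<\alpha_Q$, the starting box of $P$ lies strictly to the left of that of $Q$ on the antidiagonal $x+y=k$, and their terminal boxes $(a-1,b)$ and $(a,b-1)$ lie on the antidiagonal $x+y=a+b-1$ with $(a-1,b)$ again to the left. Comparing $x$-coordinates antidiagonal by antidiagonal, and using that a rightward or downward step changes the $x$-coordinate by $1$ or $0$, one sees that on every antidiagonal $P$'s box is weakly to the left of $Q$'s; moreover a common box would have to be entered by $P$ from the left, forcing $S_t\le\alpha_P$ there, and by $Q$ from above, forcing $S_t\ge\alpha_Q$ there, which is impossible. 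Hence $P$ and $Q$ are disjoint; in particular they cannot both pass through $(a-1,b-1)$, so the penultimate box of $P$ is $(a-2,b)$ or the penultimate box of $Q$ is $(a,b-2)$. Analyzing the last steps of the two paths into the corner accordingly — e.g.\ if $P$ enters $(a-1,b)$ from $(a-2,b)$ then $S_t(a-1,b)\le\alpha_P$ — together with the inequalities of Lemma~\ref{lem:BadConfigurations}, and especially the diagonal one $S_t(x+1,y-1)\le S_t(x,y)+1$, which ties $S_t(a-1,b)$, $S_t(a,b-1)$ and $S_t(a,b)$ together, one forces $\beta$ into $(-\infty,\alpha_P]$ or $[\alpha_Q,\infty)$, a contradiction. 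I expect the main obstacle to lie exactly in making this corner analysis uniform across configurations: in some cases one must trace $P$ and $Q$ back several antidiagonals, keeping them disjoint and propagating the Lemma~\ref{lem:BadConfigurations} inequalities inward from $(a,b)$, before the contradiction becomes visible.
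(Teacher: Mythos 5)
Your setup mirrors the paper's: induction on $a+b$, restricting to $t|_{[a-1]\times[b]}$ and $t|_{[a]\times[b-1]}$, and trying to append $(a,b)$ to one of the two resulting paths. Your disjointness argument (at the first common box $P$ would arrive from the left, forcing $S_t\le\alpha_P$, while $Q$ would arrive from above, forcing $S_t\ge\alpha_Q$) is in the spirit of Lemma~\ref{lem:NonCrossing} and is essentially correct. But the crucial final step — ruling out $\alpha_P<\beta<\alpha_Q$ — is not carried out, and as you yourself anticipate, a ``corner analysis'' of the last one or two boxes of $P$ and $Q$ really is not enough. For example, if $P$ ends with a rightward step into $(a-1,b)$ and $Q$ ends with a rightward step from $(a-1,b-1)$ into $(a,b-1)$, then the only constraints near the corner are $S_t(a-1,b)\le\alpha_P$, $S_t(a,b-1)\le\alpha_Q$, and the inequalities of Lemma~\ref{lem:BadConfigurations}; these force $\beta=\alpha_P+1$ but say nothing against $\alpha_Q\ge\alpha_P+2$, so no contradiction appears locally.

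The missing idea, which is the one the paper actually uses (``let $(x,b-1)$ be the leftmost box of $P_1$ in row $b-1$''), is to trace $Q$ all the way back to the box $(x,b-1)$ where it first enters row $b-1$ — potentially far from the corner. There $S_t(x,b-1)\ge\alpha_Q$ (from the downward step, or from $S_t(\alpha_Q,k-\alpha_Q)=\alpha_Q$ if $(x,b-1)$ is the initial box). Disjointness then forces $(x-1,b)$ and $(x,b)$ to lie in $P$, so $S_t(x,b)\le\alpha_P$, and the diagonal inequality of Lemma~\ref{lem:BadConfigurations} gives $\alpha_Q\le S_t(x,b-1)\le S_t(x,b)+1\le\alpha_P+1$, contradicting $\alpha_Q\ge\alpha_P+2$. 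Without this global step your contradiction does not materialize; the paper also streamlines its case analysis with the preliminary reduction that either $S_t(a,b)\ge S_t(a,b-1)$ or $S_t(a,b)\le S_t(a-1,b)$ (again via Lemma~\ref{lem:BadConfigurations}), which lets it avoid an explicit ``rule out the middle'' dichotomy; your more symmetric framing is fine in principle, but it still needs the non-local trace-back argument to close.
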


\begin{proof}
We proceed by induction on $a+b$.  In the base case $b=k-a$, the admissible path consists of the single box $(a,b)$.

If $a+b > k$, then by induction the tableau $t_1$ obtained by deleting the last row of $t$ contains an admissible path $P_1$ of type $\alpha_1$.  Similarly, the tableau $t_2$ obtained by deleting the last column of $t$ contains an admissible path $P_2$ of type $\alpha_2$.  We will show that either the path $P'_1$ obtained by appending $(a,b)$ to $P_1$ or the path $P'_2$ obtained by appending $(a,b)$ to $P_2$ is admissible.  Note that $P'_1$ is admissible if and only if $S_t(a,b) \geq \alpha_1$ and $P'_2$ is admissible if and only if $S_t(a,b) \leq \alpha_2$.

If $S_t(a,b) < S_t(a,b-1)$, then by Lemma~\ref{lem:BadConfigurations}, we have 
\[
S_t(a,b-1) = S_t(a,b)+1 \mbox{ and }
\]
\[
S_t(a-1,b) \geq S_t(a,b-1)-1 = S_t(a,b).
\]
It follows that either $S_t(a,b) \geq S_t(a,b-1)$ or $S_t(a,b) \leq S_t(a-1,b)$.  We assume that $S_t(a,b) \geq S_t(a,b-1)$; the case where $S_t(a,b) \leq S_t(a-1,b)$ follows by a similar argument.  If $S_t(a,b) \geq \alpha_1$, then $P'_1$ is an admissible path of type $\alpha_1$, and we are done.  If $P_1$ contains the box $(a,b-2)$, then $S_t(a,b) \geq S_t(a,b-1) \geq \alpha_1$, by the definition of an admissible path.  We may therefore assume that $P_1$ contains the box $(a-1,b-1)$, and $S_t(a,b) < \alpha_1$.

Now consider the path $P_2$.  If the paths $P_1$ and $P_2$ intersect, let $(x,y)$ be a box in the intersection, and let $t_3$ be the tableau obtained by restricting $t$ to $[x]\times[y]$.  The restrictions of $P_1$ and $P_2$ to $t_3$ are both admissible, and it follows from Lemma~\ref{lem:NonCrossing} that $\alpha_1 = \alpha_2$.  Since $S_t(a,b) < \alpha_1$, we see that $P'_2$ is an admissible path.

If $P_1$ and $P_2$ do not intersect, then $P_1$ lies entirely above and to the right of $P_2$, so $\alpha_1 > \alpha_2$.  Let $(x,b-1)$ be the leftmost box of $P_1$ in row $b-1$.  Because the two paths do not intersect, the boxes $(x-1,b)$ and $(x,b)$ must be contained in $P_2$.  By the definition of admissible paths, we have $\alpha_1 \leq S_t (x,b-1)$ and $\alpha_2 \geq S_t (x,b)$.  By Lemma~\ref{lem:BadConfigurations}, however, we have
\[
\alpha_1 \leq S_t(x,b-1) \leq S_t(x,b)+1 \leq \alpha_2 + 1.
\]
It follows that $\alpha_1 = \alpha_2 + 1$.  Since $S_t(a,b) < \alpha_1$, we see that $S_t(a,b) \leq \alpha_1 + 1 = \alpha_2$, hence $P'_2$ is an admissible path.
\end{proof}

Now that we know admissible paths exist, we can use them to construct a scrollar tableau from an arbitrary tableau.

\begin{example}
Before proving Theorem~\ref{thm:NotMax}, we first illustrate the idea with an example.  Figure~\ref{Fig:Algorithm} depicts the example of a 5-uniform displacement tableau $t$ and an admissible path of type $\alpha = 3$ from Example~\ref{example:Paths}.  The proof of Theorem~\ref{thm:NotMax} provides us with an iterative procedure for constructing a scrollar tableau $t'$ of type 3 such that $\mathbb{T}(t) \subseteq \mathbb{T}(t')$.  This procedure begins with the subtableau on $[\alpha] \times [k-\alpha] = [3]\times[2]$.  It then follows the admissible path, extending the tableau one row or one column at a time.  Every time we extend the tableau by a column, we replace each symbol in the new column with the symbol appearing $\alpha$ boxes to the left and $k-\alpha$ boxes below in the previous tableau.  Similarly, every time we extend the tableau by a row, we replace each symbol in the new row with the symbol appearing $\alpha$ boxes to the right and $k-\alpha$ boxes above in the previous tableau.  The definition of admissible paths guarantees that this construction yields a tableau.

\begin{figure}[H]
\begin{ytableau}
1 & 2 & 3 & 9\\
4 & 6 & *(gray)7 & 10\\
5 & 8 & *(gray)11 & *(gray)13\\
12 & 14 & 15 & *(gray)16\\
\end{ytableau}\hskip2ex
\begin{ytableau}
1 & 2 & 3\\
4 & 6 & 7\\
\end{ytableau}\hskip2ex
\begin{ytableau}
1 & 2 & 3\\
4 & 6 & 7\\
5 & 8 & 11
\end{ytableau}\hskip2ex
\begin{ytableau}
1 & 2 & 3 & 5\\
4 & 6 & 7 & 10\\
5 & 8 & 11 & 13\\
\end{ytableau}\hskip2ex
\begin{ytableau}
1 & 2 & 3 & 5\\
4 & 6 & 7 & 10\\
5 & 8 & 11 & 13\\
10 & 14 & 15 & 16\\
\end{ytableau}
\caption{Construction of a scrollar tableau from a given $k$-uniform displacement tableau and admissible path.}
\label{Fig:Algorithm}
\end{figure}

\end{example}

\begin{proof}[Proof of Theorem~\ref{thm:NotMax}]
First, note that if $a+b \leq k$, then $t$ is scrollar of type $a$ for trivial reasons.  We therefore assume that $a+b > k$.  By Proposition~\ref{prop:AdmissibleExistence}, there exists an admissible path $P$ in $t$ of type $\alpha$.  We will prove, by induction on $a+b$, that there exists a scrollar tableau $t'$ of type $\alpha$ on $[a]\times[b]$ such that $\mathbb{T}(t) \subseteq \mathbb{T}(t')$.  In addition, we will see that $t'(a-i,b) = t(a-i,b)$ for all $i < \alpha$ and $t'(a,b-j) = t(a,b-j)$ for all $j < k-\alpha$.  We assume that $P$ contains the box $(a-1,b)$; the case where $P$ contains the box $(a,b-1)$ follows by a similar argument.  By the definition of admissible paths, this implies that $S_t(a,b) \leq \alpha$.

Let $t_1$ be the tableau obtained by deleting the last column from $t$.  The restriction of $P$ to $t_1$ is an admissible path of type $\alpha$ in $t_1$.  By induction, therefore, there exists a scrollar tableau $t'_1$ on $[a-1]\times[b]$ such that $\mathbb{T}(t_1) \subseteq \mathbb{T}(t'_1)$.  Moreover, we have $t'_1(a-1-i,b) = t_1(a-1-i,b)$ for all $i<\alpha$, and $t'_1(a-1,b-j) = t_1(a-1,b-j)$ for all $j < k-\alpha$.  By Lemma~\ref{lem:Containment}, every symbol in $t'_1$ is a symbol in $t_1$, and if $t_1(x,y) = t'_1(x',y')$, then $x-y = x'-y'\pmod k$.

We now define a tableau $t'$ on $[a]\times[b]$.
\begin{displaymath}
t'(x,y) = \left\{ \begin{array}{ll}
t'_1(x,y) & \textrm{if $x < a$} \\
t'_1(x-\alpha,y+k-\alpha) & \textrm{if $x=a$ and $y \leq b-k+\alpha$} \\
t(x,y) & \textrm{if $x=a$ and $y > b-k+\alpha$.}
\end{array}\right.
\end{displaymath}
We first show that $t'$ is a tableau.  Let $(x,y) \in [a]\times[b]$.  If $x<a$, then since $t'_1$ is a tableau, we see that $t'(x,y) > t'(x-1,y)$ and $t'(x,y) > t'(x,y-1)$.  If $y \leq b-k+\alpha$, then because $t'_1$ is a tableau, we have $t'(a,y) > t'(a,y-1)$, and because $t'_1$ is scrollar of type $\alpha$, we have
\[
t'(a-1,y) < t'(a-\alpha,y+k-\alpha) = t'(a,y).
\]
If $y > b-k+\alpha$, then since $t$ is a tableau and $t'_1(a-1,y) = t(a-1,y)$, we have $t'(a-1,y) < t'(a,y)$.  If $y > b+1-k+\alpha$, then since $t$ is a tableau, we have $t'(a,y-1) < t'(a,y)$.  Finally, since $S_t(a,b) \leq \alpha$, we have
\[
t'(a,b-k+\alpha) = t(a-\alpha,b) < t(a,b+1-k+\alpha) = t'(a,b+1-k+\alpha).
\]

To see that $t'$ is scrollar, we show that if $b > k-\alpha$, then $t'(x,y) = t'(x+\alpha,y-k+\alpha)$ for all pairs $(x,y)$.  This is clear if $x+\alpha < a$, because $t'_1$ is scrollar of type $\alpha$.  On the other hand, if $x+\alpha = a$, then this holds by construction.  If $\alpha = k-b$, then $t'(x,1) > t'(x+b-k,b)$ for all $x<a$ because $t'_1$ is scrollar, and $t'(a,1) > t'(a+b-k,b)$ because $S_t(a,b) \leq \alpha = k-b$.

Finally, we show that $\mathbb{T}(t) \subseteq \mathbb{T}(t')$.  Note that the symbol $t'(x,y)$ is also a symbol in $t_1$ if and only if $x<a$ or $y \leq b-k+\alpha$.  By construction, every symbol in $t_1$ is also a symbol in $t$, and if
\[
t'(x,y) = t_1(x',y') = t (x'',y''),
\]
then
\[
x-y = x'-y' = x''-y''\pmod k.
\]
On the other hand, if $y > b-k+\alpha$, then the symbol $t'(a,y) = t(a,y)$ appears only in one box, and there is nothing to prove.  By Lemma~\ref{lem:Containment}, it follows that $\mathbb{T}(t) \subseteq \mathbb{T}(t')$.
\end{proof}

\bibliography{references}{}
\bibliographystyle{alpha}
\end{document}